\author{Isaac Bird}
\author{Liran Shaul}
\author{Prashanth Sridhar}
\author{Jordan Williamson}
\address{Department of Algebra, Faculty of Mathematics and Physics, Charles University in Prague, Sokolovsk\'a 83, 186 75 Praha, Czech Republic}
\email{bird@karlin.mff.cuni.cz}
\email{shaul@karlin.mff.cuni.cz}
\email{sridhar@karlin.mff.cuni.cz}
\email{williamson@karlin.mff.cuni.cz}
\newtheorem{thm}[equation]{Theorem}
\newtheorem*{thm*}{Theorem}
\newtheorem*{cor*}{Corollary}
\newtheorem*{dfn*}{Definition}
\newtheorem{cthm}{Theorem}
\newtheorem{cor}[equation]{Corollary}
\newtheorem{prop}[equation]{Proposition}
\newtheorem{lem}[equation]{Lemma}
\theoremstyle{definition}
\newtheorem{dfn}[equation]{Definition}
\newtheorem{rem}[equation]{Remark}
\newtheorem{exa}[equation]{Example}
\newcommand{\opn}{\operatorname}
\newcommand{\cat}[1]{\operatorname{\mathsf{#1}}}
\newcommand{\mfrak}[1]{\mathfrak{#1}}
\newcommand{\msf}[1]{\mathsf{#1}}
\newcommand{\mrm}[1]{\mathrm{#1}}
\newcommand{\mbb}[1]{\mathbb{#1}}
\newcommand{\m}{\mfrak{m}}
\newcommand{\n}{\mfrak{n}}
\newcommand{\p}{\mfrak{p}}
\newcommand{\q}{\mfrak{q}}
\newcommand{\injdim}{\operatorname{inj\,dim}}
\newcommand{\projdim}{\operatorname{proj\,dim}}
\newcommand{\flatdim}{\operatorname{flat\,dim}}
\newcommand{\seqdepth}{\operatorname{seq.depth}}
\newcommand{\amp}{\operatorname{amp}}
\newcommand{\Hom}{\operatorname{Hom}}
\newcommand{\RHom}{\msf{R}\mrm{Hom}}
\newcommand{\Lotimes}{\otimes^\msf{L}}
\def\skewtimes{\ltimes\!}
\newcommand{\D}{\msf{D}}
\begin{document}
\title{Finitistic dimensions over commutative DG-rings}

\begin{abstract}
In this paper we study the finitistic dimensions of commutative noetherian non-positive DG-rings with finite amplitude. We prove that any DG-module $M$ of finite flat dimension over such a DG-ring satisfies $\projdim_A(M) \leq \dim(\mrm{H}^0 (A)) - \inf(M)$. We further provide explicit constructions of DG-modules with prescribed projective dimension and deduce that the big finitistic projective dimension satisfies the bounds $\dim(\mrm{H}^0 (A)) - \amp(A) \leq \mathsf{FPD}(A) \leq \dim(\mrm{H}^0(A))$. Moreover, we prove that DG-rings exist which achieve either bound. As a direct application, we prove new vanishing results for the derived Hochschild (co)homology of homologically smooth algebras.
\end{abstract}

\numberwithin{equation}{section}

\maketitle
\setcounter{tocdepth}{1}
\tableofcontents

\section{Introduction}
Recent years have seen an explosion in the use of differential graded (DG) techniques in commutative algebra. The study of DG-rings encompasses the affine theory of derived algebraic geometry, but also feeds back into commutative algebra via the viewpoint that DG-rings are resolutions of ordinary rings. In this paper, we continue the study of DG-rings via their homological dimensions, in particular, finding bounds on projective dimensions and thus on the finitistic dimensions of such DG-rings.

Of the homological invariants of modules, projective dimension is classically the one of most interest, both in terms of understanding the structure of the module category, and in attaching a numerical invariant to a ring (the global dimension). In general, however, projective dimension is not adequate to understand the structure of a module category, nor provide a meaningful ring invariant. For example, the Auslander-Buchsbaum-Serre theorem shows that the global dimension of a commutative noetherian ring of finite Krull dimension is finite if and only if the ring is regular, and thus there are commutative rings whose module categories have extremely simple structures but infinite global dimension.

Therefore, instead one seeks an alternative numerical invariant which captures the complexity of rings. One can study the subcategory of modules that have finite projective dimension, and from this one may define the big (resp., small) finitistic projective dimension of the ring to be the supremum of the projective dimensions of all modules (resp., finitely generated modules) of finite projective dimension. Bass attached two questions to these invariants: \begin{enumerate}
\item do the big and small finitistic projective dimensions coincide?
\item are these finitistic dimensions finite?
\end{enumerate}
In the case of commutative noetherian rings, in general both questions have definitive negative answers. The Auslander-Buchsbaum formula shows that over a commutative noetherian local ring, the small finitistic projective dimension is equal to the depth of the ring. The big finitistic projective dimension of a commutative noetherian ring is equal to its Krull dimension, due to results of Bass~\cite{Bass} and subsequently Raynaud--Gruson~\cite{RG}. In particular, we see that for commutative noetherian local rings, the two finitistic projective dimensions coincide if and only if the ring is Cohen-Macaulay, and that the big finitistic projective dimension is finite if and only if the Krull dimension is finite.

In contrast with the commutative case,
the study of the finitistic projective dimension is a very active area of research in noncommutative algebra,
and problems related to the finitistic dimensions of finite dimensional algebras are among the most important open problems in noncommutative algebra. In particular, the answer to the second question above is unknown in this setting. See the survey \cite{Hu} for details.

In this paper, we answer the analogous questions for commutative noetherian DG-rings. Throughout we work with non-positively graded commutative noetherian DG-rings of finite amplitude (in cohomological notation). We note that by \cite[Theorem 0.2]{JorAmp} and \cite[Theorem 7.21]{YeDual}, such DG-rings never have finite global dimension. Our first main result, which appears as \cref{thm:globalfpdbound} in the main body, provides an upper bound on the projective dimension of any bounded DG-module of finite flat dimension.
\begin{cthm}\label{firsttheorem}
Let $A$ be a commutative noetherian DG-ring with bounded cohomology. If $M\in\mathsf{D}^{\mrm{b}}(A)$ has $\flatdim_{A}(M)<\infty$, then $\projdim_{A}(M)\leq \dim(\mrm{H}^{0}(A))-\inf(M)$.
\end{cthm}

The proof of the above theorem is the cumulation of Sections \ref{sec:extbounds}-\ref{sec:global}. The proof builds on various amplitude inequalities proved in \cref{sec:extbounds}, which are of independent interest. These enable us to prove a special case of \cref{firsttheorem}, namely in the case when the DG-ring at hand is local and admits a dualizing DG-module, see \cref{thm:upperBoundamp}. Since the derived completion of a DG-ring admits such a dualizing DG-module, we can use faithfully flat descent to pass to the remaining part of the proof, which is the passage from local to global, see \cref{sec:global}.

We make an important further comment: those familiar with projective dimensions of complexes over rings may wonder whether the same result for complexes combined with a reduction argument will provide a proof of the above theorem. This method does not work since one cannot control the infimum of the reduction in general, as discussed in more length in \cref{rem:reduction-bound}.

Having obtained an upper bound on the projective dimension of DG-modules of finite projective dimension, we turn our attention to constructing DG-modules of prescribed projective dimension. This is our second main result and appears in the text as \cref{thm:construction}.

\begin{cthm}
Let $A$ be a commutative noetherian DG-ring with bounded cohomology such that $d=\dim(\mrm{H}^{0}(A))\leq\infty$. Then for any $0\leq n\leq d$ there is an $M\in\mathsf{D}^{\mrm{b}}(A)$ with $\sup(M)=0$, $\inf(M)\geq \inf(A)$ and $\projdim_{A}(M)=n$.
\end{cthm}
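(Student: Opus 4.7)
The plan is to realise $M$ as a Koszul DG-module on a carefully lifted sequence from $R = \mrm{H}^0(A)$. For $n = 0$, take $M = A$, which is semi-free with $\projdim_A(A) = 0$, $\sup(A) = 0$, and $\inf(A) \ge \inf(A)$, so all three conditions hold trivially.

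For $n \ge 1$, the construction has two inputs. First, since $\dim R \ge n$, I select a maximal ideal $\mfrak{m} \subset R$ of height at least $n$. Second, by prime avoidance applied to the associated primes of the finitely many nonzero cohomology $R$-modules $\mrm{H}^i(A)$ with $i \in [\inf(A), 0]$, I choose $x_1, \ldots, x_n \in \mfrak{m}$ such that for each $j$ the element $x_j$ acts as a non-zerodivisor on $\mrm{H}^i(A)/(x_1, \ldots, x_{j-1})\mrm{H}^i(A)$ for every such $i$; this is possible whenever $\mfrak{m}$ avoids the finite union of those associated primes. I then lift each $x_j$ to a cocycle $\tilde{x}_j \in A^0$ (automatic since $A^{>0} = 0$) and set $M = K_A(\tilde{x}_1, \ldots, \tilde{x}_n)$, the Koszul DG-$A$-module. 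As a graded $A$-module, $M = A\langle e_1, \ldots, e_n\rangle$ with $\deg e_i = -1$, so $M$ is a bounded semi-free DG-$A$-module concentrated in internal cohomological degrees $[-n, 0]$.

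Next I verify the three properties. Iterating the long exact sequence of the triangle $K_A(\tilde{x}_1, \ldots, \tilde{x}_{j-1}) \xrightarrow{\tilde{x}_j} K_A(\tilde{x}_1, \ldots, \tilde{x}_{j-1}) \to K_A(\tilde{x}_1, \ldots, \tilde{x}_j)$ and using the regular-sequence hypothesis gives $\mrm{H}^i(M) \cong \mrm{H}^i(A)/(x_1, \ldots, x_n)\mrm{H}^i(A)$ for every $i \in [\inf(A), 0]$. Hence $\sup(M) = 0$ (since $R/(x_1, \ldots, x_n) \ne 0$) and $\inf(M) \ge \inf(A)$. For the projective dimension, the semi-free representation in internal degrees $[-n, 0]$ shows $\Hom_A(M, N)$ is concentrated in cohomological degrees $[0, n]$ for any $A^0$-module $N$ placed in degree zero, yielding $\opn{Ext}^i_A(M, N) = 0$ for $i > n$ and hence the upper bound $\projdim_A(M) \le n$. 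A Koszul-duality computation identifies $\Hom_A(M, R)$ with the classical Koszul cochain complex $K^\bullet_R(x_1, \ldots, x_n)$, whose top cohomology is $R/(x_1, \ldots, x_n)$; this is nonzero because $(x_1, \ldots, x_n) \subset \mfrak{m} \ne R$, giving $\opn{Ext}^n_A(M, R) \ne 0$ and thus $\projdim_A(M) \ge n$.

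The main obstacle is the existence of the regular sequence $x_1, \ldots, x_n$ on the $\mrm{H}^i(A)$. When some $\mrm{H}^i(A)$ has $\mfrak{m}$ itself as an embedded prime (a genuine possibility in the non-Cohen--Macaulay case), pure prime avoidance fails and no such sequence exists at that $\mfrak{m}$. In this case I expect one must replace $A$ by a suitable faithfully flat extension, construct $M$ there, and descend back to $A$, in the same spirit as the local-to-complete and local-to-global reductions used in the proof of Theorem~B.
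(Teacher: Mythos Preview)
Your argument is correct when the regular sequence exists, but the gap you identify is genuine and your proposed fix does not close it. Faithfully flat ascent/descent, as used for Theorem~B, goes the other way: starting from an $A$-module $M$ one shows $\projdim_A(M) = \projdim_{\widehat{A}}(M \Lotimes_A \widehat{A})$. It does not let you manufacture an $A$-module from a $\widehat{A}$-module; there is no descent of objects here without descent data. Moreover, passing to the completion does not remove the obstruction: the completion of a non-Cohen--Macaulay local ring is still non-Cohen--Macaulay, so the associated-prime obstruction to finding your sequence persists. Already for ordinary rings (amplitude zero) your method asks for a length-$n$ regular sequence in a height-$n$ maximal ideal, which is exactly what fails when $\mrm{depth}\,R_{\mfrak{m}} < n$ and is precisely what Bass's original argument had to circumvent.

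The paper avoids needing a regular sequence of length $n$ altogether. It first locates a prime $\bar{\p}$ of height $n-1$ lying in $\mrm{LCM}(A) \cap \mrm{CM}(\mrm{H}^0(A))$ with $\dim(\mrm{H}^0(A)/\bar{\p}) \ge 1$; the existence of such $\bar{\p}$ is established by a chain-climbing argument that passes to the derived completion only to exploit openness of the Cohen--Macaulay loci (guaranteed there by the presence of a dualizing DG-module) and then contracts primes back along the faithfully flat map. At $\bar{\p}$ one then has a sequence $a_1,\dots,a_{n-1}$ that is simultaneously $A_{\bar{\p}}$-regular and a system of parameters for $\mrm{H}^0(A)_{\bar{\p}}$. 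After spreading out by localizing at a suitable element $s$, the module is $M = K(A_s; a_1,\dots,a_{n-1})$: the bound $\inf(M) \ge \inf(A)$ comes from $A_s$-regularity, while the missing $+1$ in the projective dimension is supplied by the localization $\mrm{H}^0(A) \to \mrm{H}^0(A)_s$ via a classical Ext computation of Bass. Thus only $n-1$ regular elements are required, and those are guaranteed by working inside the Cohen--Macaulay loci.
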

In particular, we are able to find a DG-module $M$ with $\projdim_{A}(M)=\dim(\mrm{H}^{0}(A))$ and $\inf(M)\geq -\amp(A)$. The proof of this theorem builds on results of Shaul regarding the Cohen-Macaulay loci of DG-rings~\cite{ShCM, ShLoci}.

Using the above two theorems, we can tackle Bass's questions regarding finitistic projective dimension in the DG-setting. As with complexes, the projective dimension of a DG-module can be arbitrarily large, and thus we must normalize the definition of the finitistic projective dimension. To this end we consider the number
\[
\mathsf{FPD}(A)=\sup\{\projdim_{A}(M)+\inf(M)\mid M\in\mathsf{D}^{\mrm{b}}(A) \text{ such that }\projdim_{A}(M)<\infty\},
\]
which encompasses the classical case when $A$ is a discrete ring. The following result is the main result of our paper in relation to finitistic dimensions, and appears as \cref{cor:FPDbounds} in the body of the text.

\begin{cthm}
Let $A$ be a commutative noetherian DG-ring with bounded cohomology such that $\dim(\mrm{H}^{0}(A))<\infty$. Then there are inequalities
\[
\dim(\mrm{H}^{0}(A))-\amp(A)\leq \mathsf{FPD}(A)\leq \dim(\mrm{H}^{0}(A)).
\]
\end{cthm}
Note that if $A$ is an ordinary ring (i.e., a DG-ring concentrated in one degree), then the above theorem encompasses the classic results of Bass~\cite{Bass} and Raynaud--Gruson~\cite{RG}. However, this is where the similarity to the world of ordinary rings ends, for the inequalities cannot be improved upon. Infact, in \cref{thm:examples} we show that it is possible for any $d\geq 0$ and $n>0$, to construct commutative noetherian DG-rings $A$ and $B$ such that $\amp(A)=\amp(B)=n$ and $\dim(\mrm{H}^{0}(A))=\dim(\mrm{H}^{0}(B))=d$, but $\mathsf{FPD}(A)=d-n$ and $\mathsf{FPD}(B)=d$. Thus either bound can be achieved in the DG-setting, and moreover, this dichotomy occurs for any dimension and amplitude.

The above theorem also enables us to consider the first of Bass's questions in the DG-world: when is $\mathsf{fpd}(A)=\mathsf{FPD}(A)$? This appears in~\cref{sec:fpd} of the body of the paper. In the local case, we may use the DG-version of the Auslander-Buchsbaum formula to deduce, as in~\cref{smallfindimdepth}, that
\[
\mathsf{fpd}(A)=\seqdepth_{A}(A)-\amp(A).
\]
and thus a commutative noetherian local DG-ring with bounded cohomology is local-Cohen-Macaulay if and only if $\mathsf{fpd}(A)=\dim(\mrm{H}^{0}(A))-\amp(A)$. In particular, we see that in the world of commutative noetherian local DG-rings a direct analogue of the answer for Bass's first question is not forthcoming: it remains an open question whether $\mathsf{FPD}(A)=\mathsf{fpd}(A)$ if and only if $A$ is local-Cohen-Macaulay. 

    In \cref{sec:fpd} we also consider the DG-versions of the finitistic flat and finitistic injective dimension, and consider how they relate to each other and to the finitistic projective dimension discussed above. In \cref{sec:examples}, alongside the aforementioned \cref{thm:examples}, we consider some examples. In particular, we show that for Gorenstein local DG-rings $A$ with $\mrm{H}^0(A)$ also Gorenstein, one obtains sharper bounds in \cref{firsttheorem}, and as such $\msf{FPD}(A) = \mrm{dim}(\mrm{H}^0(A)) - \amp(A)$. Finally in \cref{sec:smooth}, we consider an application to homologically smooth maps, as introduced by Kontsevich, giving new vanishing results for the derived Hochschild homology and cohomology.

\subsection*{Acknowledgements}
The second-named author thanks Jan Trlifaj for helpful discussions.
The authors thank the referees for various suggestions that helped improving the paper.
Isaac Bird, Liran Shaul and Jordan Williamson were supported by the grant GA~\v{C}R 20-02760Y from the Czech Science Foundation.
Prashanth Sridhar was supported by the grant GA~\v{C}R 20-13778S from the Czech Science Foundation. 

\section{Homological dimensions over DG-rings}\label{sec:preliminaries}
\subsection{Setup and conventions}
Throughout we will be working with commutative noetherian DG-rings. Recall that a DG-ring is a graded ring $A$ with a differential $d\colon A \to A$ of codegree 1 which satisfies the Leibniz rule. We say that $A$ is commutative if it is graded-commutative and moreover $a^2 = 0$ if $a$ has odd codegree. In this paper, all DG-rings will be assumed to be non-positive meaning that $A^i = 0$ for $i > 0$. We refer the reader to~\cite{Yebook} for more information regarding DG-rings.

A DG-module $M$ over $A$ is a graded $A$-module together with a differential of codegree 1 satisfying the Leibniz rule. The DG-$A$-modules form an abelian category, and by inverting the quasi-isomorphisms we obtain the derived category $\msf{D}(A)$ which is triangulated. When $A$ is commutative, the derived category $\msf{D}(A)$ is moreover tensor-triangulated, and we write $- \Lotimes_A -$ for its monoidal product, and $\RHom_A(-,-)$ for the internal hom. Due to our cohomological notation, we denote $\mrm{Ext}^i_A(M,N) = \mrm{H}^i(\RHom_A(M,N))$ and $\mrm{Tor}_i^A(M,N) = \mrm{H}^{-i}(M \Lotimes_A N)$.

For any $M \in \msf{D}(A)$, we define \[\sup(M) = \sup\{i \mid \mrm{H}^i(M) \neq 0\} \quad \mrm{and} \quad \inf(M) = \inf\{i \mid \mrm{H}^i(M) \neq 0\}.\] 
We write $\msf{D}^{+}(A)$ (resp., $\msf{D}^{-}(A)$) for the full subcategory of $\msf{D}(A)$ consisting of the DG-modules $M$ with $\inf(M) < \infty$ (resp., $\sup(M) < \infty$). We also define $\amp(M) = \sup(M) - \inf(M)$ and write $\msf{D}^\mrm{b}(A)$ for the full subcategory of $\msf{D}(A)$ on those DG-modules $M$ with $\amp(M) < \infty$. We denote the full subcategory of $M \in \msf{D}(A)$ for which each $\mrm{H}^i(M)$ is a finitely generated $\mrm{H}^0(A)$-module by $\msf{D}_\mrm{f}(A)$. We define $\msf{D}^\mrm{b}_\mrm{f}(A) = \msf{D}^\mrm{b}(A) \cap \msf{D}_\mrm{f}(A)$ and similarly for $\msf{D}^+_\mrm{f}(A)$ and $\msf{D}^-_\mrm{f}(A)$. We also write $\msf{D}^0(A)$ for the full subcategory of $\msf{D}(A)$ consisting of those DG-modules $M$ with $\amp(M) = 0$.

If $A$ is a commutative DG-ring, then $\mrm{H}^0(A)$ is itself a commutative ring and there is a canonical map $A \to \mrm{H}^0(A)$ of DG-rings. We say that $A$ is noetherian if $\mrm{H}^0(A)$ is noetherian and for all $i < 0$, the $\mrm{H}^0(A)$-module $\mrm{H}^i(A)$ is finitely generated. Ideals of $\mrm{H}^{0}(A)$ are denoted by $\bar{\mfrak{a}}$, following the conventions of \cite{YeDual}. If $\mrm{H}^0(A)$ is a local ring with maximal ideal $\bar\m$, then we say that $(A, \bar\m)$ is a local DG-ring. 

The canonical map $A\to\mrm{H}^{0}(A)$ induces two functors $\msf{D}(A)\to\msf{D}(\mrm{H}^{0}(A))$, namely reduction and coreduction. These are given, respectively, by
\[
\begin{matrix}
\mrm{H}^{0}(A)\Lotimes_A- & \mbox{ and } & \RHom_A(\mrm{H}^{0}(A),-).
\end{matrix}
\]
Both functors play a crucial role throughout our study.

\subsection{Homological dimensions}
We now recall the central players of our study, which are homological dimensions of DG-modules over DG-rings. These definitions extend those made in~\cite{AF} for unbounded complexes over discrete rings to the DG-setting.
In the DG setting, they were studied in ~\cite{APThesis}.

\begin{dfn}\label{dfn:homDim}
	Let $A$ be a DG-ring and $M \in \msf{D}(A)$.
	\begin{itemize}
		\item[(i)] The \emph{injective dimension} of $M$ is defined by
		\[\injdim_A(M) = \inf\{n \in \mbb{Z} \mid \mrm{Ext}_A^i(N,M) = 0 \text{ for any $N \in \msf{D}^\mrm{b}(A)$ and any $i > n - \inf(N)$}\}.\]
		\item[(ii)] The \emph{flat dimension} of $M$ is defined by
		\[\flatdim_A(M) = \inf\{n \in \mbb{Z} \mid \mrm{Tor}^A_i(N,M) = 0 \text{ for any $N \in \msf{D}^\mrm{b}(A)$ and any $i > n - \inf(N)$}\}.\]
		\item[(iii)] The \emph{projective dimension} of $M$ is defined by
		\[\projdim_A(M) = \inf\{n \in \mbb{Z} \mid \mrm{Ext}_A^i(M,N) = 0 \text{ for any $N \in \msf{D}^\mrm{b}(A)$ and any $i > n + \sup(N)$}\}.\]
	\end{itemize}
\end{dfn}

It is sufficient to test these against modules (i.e., DG-modules with amplitude 0) by a simple inductive argument as we now show.
\begin{prop}\label{prop:zeroAmp}
	Let $A$ be a DG-ring and $M \in \msf{D}(A)$. Then we have:
	\begin{itemize}
		\item[(i)] $\injdim_A(M) = \inf\{n \in \mbb{Z} \mid \mrm{Ext}_A^i(N,M) = 0 \text{ for any $N \in \msf{D}^0(A)$ and any $i > n - \inf(N)$}\}$;
		\item[(ii)] $\flatdim_A(M) = \inf\{n \in \mbb{Z} \mid \mrm{Tor}^A_i(N,M) = 0 \text{ for any $N \in \msf{D}^0(A)$ and any $i > n - \inf(N)$}\}$;
		\item[(iii)] $\projdim_A(M) = \inf\{n \in \mbb{Z} \mid \mrm{Ext}_A^i(M,N) = 0 \text{ for any $N \in \msf{D}^0(A)$ and any $i > n + \sup(N)$}\}$.
	\end{itemize}
\end{prop}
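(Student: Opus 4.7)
The inequality $\ge$ in each of (i)--(iii) is immediate from the inclusion $\msf{D}^0(A) \subseteq \msf{D}^\mrm{b}(A)$, so only the $\le$ direction requires argument. The plan is a uniform induction on $\amp(N)$ for the test object $N \in \msf{D}^\mrm{b}(A)$, using the soft truncation distinguished triangles in $\msf{D}(A)$ to peel off one cohomology at a time until one lands in $\msf{D}^0(A)$.

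For part (i), suppose the right-hand side is some $n$; I will prove by induction on $k = \amp(N)$ that $\mrm{Ext}^i_A(N,M) = 0$ for every $N \in \msf{D}^\mrm{b}(A)$ and every $i > n - \inf(N)$. The base case $k = 0$ is the hypothesis. For $k \ge 1$, with $j = \inf(N)$, I will invoke the soft truncation triangle
\[
\mrm{H}^j(N)[-j] \to N \to \tau^{\ge j+1}(N) \to \Sigma \mrm{H}^j(N)[-j]
\]
(which exists at the level of DG-modules over non-positive DG-rings). Here the left-hand term lies in $\msf{D}^0(A)$ with infimum $j$, while $\tau^{\ge j+1}(N)$ satisfies $\inf(\tau^{\ge j+1}(N)) \ge j+1$ and has amplitude strictly less than $k$. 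Applying $\RHom_A(-,M)$ and using the associated long exact sequence, the vanishing for $i > n-j$ drops out of the hypothesis (for the left term) and the inductive hypothesis (for the right), since $n - \inf(\tau^{\ge j+1}(N)) \le n - j - 1 < n - j$.

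Part (ii) is identical with $- \Lotimes_A M$ in place of $\RHom_A(-,M)$ and the long exact sequence for $\mrm{Tor}$. For part (iii) (already recorded by Minamoto in \cite{Mi}), I will instead use the dual truncation from above: given $N \in \msf{D}^\mrm{b}(A)$ with $s = \sup(N)$, the triangle
\[
\tau^{\le s-1}(N) \to N \to \mrm{H}^s(N)[-s] \to \Sigma \tau^{\le s-1}(N)
\]
combined with $\RHom_A(M,-)$ drives the analogous induction, this time with the range $i > n + \sup(N)$ and the comparison $n + \sup(\tau^{\le s-1}(N)) \le n + s - 1 < n + s$.

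\textbf{Anticipated difficulty.} There is no serious obstacle. The only item requiring care is exponent bookkeeping: in each of the three parts one must check that the vanishing range $i > n - \inf(N)$ (respectively $i > n + \sup(N)$) in the middle of the triangle strictly contains the ranges already known by induction and by hypothesis on the two outer terms, so that the long exact sequence delivers the required vanishing. The choice of truncation direction in each part is dictated by whether the bound depends on $\inf(N)$ or $\sup(N)$.
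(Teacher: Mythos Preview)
Your proposal is correct and follows essentially the same approach as the paper: both argue by induction on $\amp(N)$, using a truncation triangle together with the long exact sequence in $\mrm{Ext}$ (or $\mrm{Tor}$) to reduce to smaller amplitude. The only cosmetic difference is that the paper takes an unspecified truncation $N' \to N \to N''$ with $\amp(N'),\amp(N'')<k$ and $\inf(N'),\inf(N'')\ge\inf(N)$, whereas you peel off a single cohomology at the bottom (or top) each time; the bookkeeping is identical.
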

\begin{proof}
	Since the proofs are similar for all three cases, we prove only the injective dimension case. We must show that if $\mrm{Ext}^i_A(N,M)=0$ for all $i>n-\inf(N)$ where $\amp(N)=0$ then $\mrm{Ext}_A^i(N,M) = 0$ for all $N \in \msf{D}^\mrm{b}(A)$ and $i>n-\inf(N)$. We argue by induction on $\amp(N)=0$. The base case is clear, so suppose the claim holds whenever $\amp(N) < k$. 
	
	Fix an $N$ with $\amp(N) = k$. By taking truncations, there is a triangle \[N' \to N \to N''\] where $\amp(N')<k$, $\amp(N'')<k$, $\inf(N')\geq \inf(N)$ and $\inf(N'')\geq \inf(N)$. Applying $\RHom_A(-,M)$ to the above triangle and applying the long exact sequence in cohomology we obtain
	\[\cdots \to \mrm{Ext}_A^i(N'',M) \to \mrm{Ext}_A^i(N,M) \to \mrm{Ext}_A^i(N',M) \to \cdots.\] 
	Suppose $i > n - \inf(N)$.  By the properties of $N'$ and $N''$ as above, it follows that $i>n-\inf(N')$ and $i>n-\inf(N'')$ and hence by inductive hypothesis we are done.
\end{proof}
We note that an alternative proof of the projective case can be found at \cite[Theorem 2.22]{Mi}. As mentioned above, the homological dimensions behave very well with respect to reduction and coreduction, as the following corollary illustrates.

\begin{cor}\label{cor:injdimcoreduction}
	Let $A$ be a DG-ring and $M \in \msf{D}(A)$. Then we have:
	\begin{itemize}
		\item[(i)] $\projdim_A(M) = \projdim_{\mrm{H}^0(A)}(\mrm{H}^0(A)\Lotimes_A M)$;
		\item[(ii)] $\flatdim_A(M) = \flatdim_{\mrm{H}^0(A)}(\mrm{H}^0(A) \Lotimes_A M)$;
		\item[(iii)] $\injdim_A(M) = \injdim_{\mrm{H}^0(A)}(\RHom_A(\mrm{H}^0(A),M))$.
	\end{itemize}
\end{cor}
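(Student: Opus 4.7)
The plan is to reduce every dimension on each side to a vanishing criterion tested only against amplitude-zero objects, observe that $\msf{D}^0(A)$ and $\msf{D}^0(\mrm{H}^0(A))$ coincide as subcategories, and then apply the relevant adjunction between $A$ and $\mrm{H}^0(A)$. All three parts follow the same template, so I would prove them in parallel.

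First I would invoke \cref{prop:zeroAmp} to rewrite each of $\projdim_A(M)$, $\flatdim_A(M)$, $\injdim_A(M)$ and their counterparts over $\mrm{H}^0(A)$ as infima over test objects $N$ with $\amp(N)=0$. The key elementary observation is then that any $N\in\msf{D}^0(A)$ is quasi-isomorphic to a shift of an honest $\mrm{H}^0(A)$-module: indeed, if $\mrm{H}^i(N)=0$ for $i\ne k$, then $N\simeq \mrm{H}^k(N)[-k]$, and the $A$-action on $\mrm{H}^k(N)$ necessarily factors through $\mrm{H}^0(A)$ for degree reasons. Hence $\msf{D}^0(A)$ and $\msf{D}^0(\mrm{H}^0(A))$ can be identified, and both the quantities $\sup(N)$ and $\inf(N)$ are unchanged under this identification.

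With this in place, the three parts reduce to standard adjunction isomorphisms applied to such $N$. For (i), the derived extension/restriction adjunction along $A\to\mrm{H}^0(A)$ gives
\[
\RHom_A(M,N)\simeq \RHom_{\mrm{H}^0(A)}\!\bigl(\mrm{H}^0(A)\Lotimes_A M,\,N\bigr),
\]
so $\mrm{Ext}^i_A(M,N)=\mrm{Ext}^i_{\mrm{H}^0(A)}(\mrm{H}^0(A)\Lotimes_A M,N)$ and the two infima defining $\projdim_A(M)$ and $\projdim_{\mrm{H}^0(A)}(\mrm{H}^0(A)\Lotimes_A M)$ literally agree. For (ii), the tensor-product associativity gives
\[
N\Lotimes_A M\simeq N\Lotimes_{\mrm{H}^0(A)}\bigl(\mrm{H}^0(A)\Lotimes_A M\bigr),
\]
matching the Tor-vanishing conditions. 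For (iii), the other adjunction
\[
\RHom_A(N,M)\simeq \RHom_{\mrm{H}^0(A)}\!\bigl(N,\,\RHom_A(\mrm{H}^0(A),M)\bigr)
\]
identifies the relevant Ext groups, and the inf-dependent threshold $n-\inf(N)$ in \cref{dfn:homDim}(i) transfers across without change.

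I do not expect a serious obstacle here: once \cref{prop:zeroAmp} is in hand, the proof is essentially bookkeeping. The only point requiring any care is ensuring that the two categories of amplitude-zero test objects are genuinely the same and that the numerical parameters $\sup(N)$ and $\inf(N)$ (which appear in the thresholds defining the homological dimensions) are preserved under the identification; this is immediate from the argument above that any $N\in\msf{D}^0(A)$ is quasi-isomorphic to a shifted $\mrm{H}^0(A)$-module.
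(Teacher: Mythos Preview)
Your proposal is correct and follows essentially the same approach as the paper: the paper's proof is simply ``This follows from adjunction and \cref{prop:zeroAmp},'' and you have spelled out exactly what that means.
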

\begin{proof}
	This follows from adjunction and \cref{prop:zeroAmp}.
\end{proof}

We also note the following, which follows immediately from the definitions.
\begin{lem}\label{lem:pdshift}
	Let $A$ be a DG-ring.
	\begin{itemize}
		\item[(i)] For any $M \in \msf{D}(A)$ and $n \in \mathbb{Z}$ we have $\projdim_A(M[n]) = \projdim_A(M) + n$.
		\item[(ii)] For any $M,N \in \cat{D}(A)$ we have $\projdim_A(M\oplus N) = \max(\projdim_A(M),\projdim_A(N)).$
	\end{itemize}
\end{lem}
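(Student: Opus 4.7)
The plan is to deduce both statements directly from the Ext-based definition of projective dimension given in \cref{dfn:homDim}(iii), using only elementary identities for $\RHom$.

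For part (i), the key observation is that shifting commutes with $\RHom$ in the expected way: there is a natural isomorphism $\RHom_A(M[n], N) \simeq \RHom_A(M, N)[-n]$, so that
\[
\mrm{Ext}^i_A(M[n], N) \cong \mrm{Ext}^{i-n}_A(M, N)
\]
for every $N \in \cat{D}(A)$. Given an integer $k$, the condition that $\mrm{Ext}^i_A(M[n], N) = 0$ for all $i > k + \sup(N)$ and all $N \in \cat{D}^{\mrm{b}}(A)$ translates, by the change of variables $j = i - n$, into the condition $\mrm{Ext}^j_A(M, N) = 0$ for all $j > (k-n) + \sup(N)$ and all such $N$. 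Taking the infimum over $k$ on both sides yields $\projdim_A(M[n]) = \projdim_A(M) + n$.

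For part (ii), I would use the fact that $\RHom_A(-, N)$ sends direct sums (in the first variable) to direct products, which for a two-term sum gives $\RHom_A(M \oplus N, X) \simeq \RHom_A(M, X) \oplus \RHom_A(N, X)$, hence
\[
\mrm{Ext}^i_A(M \oplus N, X) \cong \mrm{Ext}^i_A(M, X) \oplus \mrm{Ext}^i_A(N, X).
\]
Since a direct sum of abelian groups vanishes if and only if each summand vanishes, the Ext-vanishing condition defining $\projdim_A(M \oplus N) \leq k$ is the conjunction of the analogous conditions defining $\projdim_A(M) \leq k$ and $\projdim_A(N) \leq k$. The smallest $k$ satisfying both is precisely $\max(\projdim_A(M), \projdim_A(N))$.

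There is no real obstacle here; both parts are formal manipulations of the definition. The only mild subtlety worth flagging is the sign convention: under the convention $\mrm{H}^i(M[n]) = \mrm{H}^{i+n}(M)$ used implicitly in the paper, a quick check with $M = A$ confirms that $\projdim_A(A[1]) = 1$, consistent with the claimed formula.
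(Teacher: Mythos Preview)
Your proof is correct and matches the paper's approach: the paper simply states that the lemma ``follows immediately from the definitions'' and gives no further argument, and your proposal supplies exactly those routine details using the Ext-based definition of projective dimension.
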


A natural question is to wonder which objects in $\msf{D}(A)$ correspond to the projective (or flat, or injective) modules over $\mrm{H}^{0}(A)$. We let
\[
\mrm{Proj}(A)=\{M\in\msf{D}(A): M=0 \text{ or }\projdim_A(M)=0=\sup(M)\}.
\]
The crucial fact about $\mrm{Proj}(A)$, as illustrated in \cite[Lemma 2.8]{Mi}, is that $\mrm{H}^{0}(-):\mrm{Proj}(A)\to\mrm{Proj}(\mrm{H}^{0}(A))$ is an equivalence of categories. In particular, $\mrm{Proj}(A)$ is closed under coproducts, extensions, and summands. Combining these facts, the following lemma is trivial.

\begin{lem}\label{prop:dgSplit}
		Let $A$ be a DG-ring,
		and let $P \in \opn{Proj}(A)$.
		Suppose we are given a distinguished triangle
		\[
		N \xrightarrow{\alpha} P \xrightarrow{\beta} M \to N[1]
		\]
		in $\cat{D}(A)$,
		and suppose that $\sup(M) = 0$
		and $\opn{Ext}^1_A(M,N) = 0$.
		Then $M \in \opn{Proj}(A)$.
\end{lem}

We similarly let $\mrm{Flat}(A)=\{M\in\msf{D}(A): M=0 \text{ or }\flatdim_A(M)=0=\sup(M)\}.$
If $F\in\mrm{Flat}(A)$ then $\mrm{H}^0(F)$ is a flat $\mrm{H}^0(A)$-module and
\begin{equation}\label{eqn:flat}
	\mrm{H}^n(F\Lotimes_A M) = \mrm{H}^0(F)\otimes_{\mrm{H}^0(A)} \mrm{H}^n(M).
\end{equation} 

Finally, we recall some other important tools, namely the DG-versions of the tensor and hom evaluation morphisms. There are natural morphisms
\begin{equation*}
	\tau\colon\RHom_A(L,M) \Lotimes_A N \to \RHom_A(L, M \Lotimes_A N)
\end{equation*}
and
\begin{equation*}\label{homev}
	\eta\colon L \Lotimes_A \RHom_A(M,N) \to \RHom_A(\RHom_A(L,M),N)
\end{equation*}
in $\msf{D}(A)$ called the \emph{tensor evaluation} and \emph{hom evaluation} respectively.
\begin{lem}\label{tensorhomeval}
	Let $A$ be a commutative noetherian DG-ring with bounded cohomology.  The tensor evaluation morphism $\tau$ is an isomorphism if any of the following conditions hold:
	\begin{itemize}
		\item[(i)] $L \in \msf{D}^\mrm{b}_\mrm{f}(A)$ and $\projdim_A(L) < \infty$;
		\item[(ii)] $N \in \msf{D}^\mrm{b}_\mrm{f}(A)$ and $\projdim_A(N) < \infty$;
		\item[(iii)] $L \in \msf{D}^-_\mrm{f}(A)$, $M \in \msf{D}^+(A)$ and $\flatdim_A(N) < \infty$;
		\item[(iv)] $\projdim_A(L) < \infty$, $M \in \msf{D}^-(A)$ and $N \in \msf{D}_\mrm{f}^-(A)$.
	\end{itemize}
	Furthermore, the hom evaluation morphism $\eta$ is an isomorphism provided either of the following conditions hold:
	\begin{itemize}
		\item[(i)] $L \in \msf{D}^-_\mrm{f}(A)$, $M \in \msf{D}^+(A)$ and $\injdim_A(N) < \infty$;
		\item[(ii)] $L \in \msf{D}^\mrm{b}_\mrm{f}(A)$ and $\projdim_A(L) < \infty$.
	\end{itemize}
\end{lem}
\begin{proof}
	We prove the isomorphisms for $\tau$, as the ones for $\eta$ are essentially identical in approach. For part (i), the assumptions imply that $L$ is compact in $\msf{D}(A)$ and then this is a standard property of compact objects. Part (ii) is similar. The assumptions in (iii) show that both $\RHom_A(-,M) \Lotimes_A N$ and $\RHom_A(-, M \Lotimes_A N)$ are contravariant way-out right functors. Since the map is an isomorphism for $L = A$ the result follows from the way-out lemma. Part (iv) is proved in a similar way to part (iii).
\end{proof}

\section{Amplitude bounds on derived homomorphisms}\label{sec:extbounds}
\subsection{Bounds on suprema and infima}
As projective and injective dimension of DG-modules can be determined by the vanishing of Ext functors, having bounds on the suprema and infima of particular DG-modules, and some associated complexes, is of great use. In this subsection, we prove some inequalities that will be of particular use throughout. 

We first recall from~\cite[Proposition 3.3]{ShINJ} and~\cite[Proof of Proposition 3.1]{YeDual} how reduction and coreduction interact with $\sup$ and $\inf$.

\begin{lem}\label{lem:infcoreduction}
	Let $A$ be a DG-ring.
	\begin{itemize}
		\item[(i)] If $M \in \msf{D}^+(A)$, then $\inf(\RHom_A(\mrm{H}^0(A),M)) = \inf(M)$ and moreover \[\mrm{H}^{\inf(M)}(\RHom_A(\mrm{H}^0(A),M)) = \mrm{H}^{\inf(M)}(M).\]
		\item[(ii)] If $M \in \msf{D}^-(A)$, then $\sup(\mrm{H}^0(A) \Lotimes_A M) = \sup(M)$ and moreover \[\mrm{H}^{\sup(M)}(\mrm{H}^0(A) \Lotimes_A M) = \mrm{H}^{\sup(M)}(M).\]
	\end{itemize}
\end{lem}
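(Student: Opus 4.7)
The plan is to reduce both statements to standard t-exactness bounds for $\Lotimes_A$ and $\RHom_A$ over a non-positive DG-ring, by exploiting the short exact sequence of DG-$A$-modules
\[
0 \to I \to A \to \mrm{H}^0(A) \to 0,
\]
where $I := \ker(A \to \mrm{H}^0(A))$. Since $A^i = 0$ for $i > 0$ and $I^0 = d(A^{-1})$, the DG-module $I$ is concentrated in cohomological degrees $\leq 0$ with $\mrm{H}^0(I) = 0$; equivalently $\sup(I) \leq -1$ (or $I$ is acyclic, in which case $A \simeq \mrm{H}^0(A)$ and both statements are trivial).

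For part (ii), I would apply $- \Lotimes_A M$ to this sequence to produce a distinguished triangle
\[
I \Lotimes_A M \to M \to \mrm{H}^0(A) \Lotimes_A M \to I \Lotimes_A M[1]
\]
in $\msf{D}(A)$. Right t-exactness of the derived tensor product over a non-positive DG-ring, in the precise form $\sup(X \Lotimes_A Y) \leq \sup(X) + \sup(Y)$ for $X, Y \in \msf{D}^-(A)$, then yields $\sup(I \Lotimes_A M) \leq \sup(M) - 1$. Plugging this into the long exact cohomology sequence sandwiches $\mrm{H}^{\sup(M)}(\mrm{H}^0(A) \Lotimes_A M)$ between a vanishing term on each side, forcing the isomorphism with $\mrm{H}^{\sup(M)}(M)$ together with vanishing of cohomology in strictly larger degrees.

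For part (i), the argument is formally dual: apply $\RHom_A(-, M)$ to the same short exact sequence to obtain the triangle
\[
\RHom_A(\mrm{H}^0(A), M) \to M \to \RHom_A(I, M) \to \RHom_A(\mrm{H}^0(A), M)[1],
\]
and invoke the corresponding left t-exactness estimate $\inf(\RHom_A(X, Y)) \geq \inf(Y) - \sup(X)$ to conclude that $\inf(\RHom_A(I, M)) \geq \inf(M) + 1$. The long exact sequence then identifies $\mrm{H}^{\inf(M)}(\RHom_A(\mrm{H}^0(A), M))$ with $\mrm{H}^{\inf(M)}(M)$ and forces vanishing in lower degrees. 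There is no real obstacle here beyond invoking the correct t-exactness bounds; these are standard for non-positive commutative DG-rings, following from the existence of semi-free resolutions of $X \in \msf{D}^-(A)$ concentrated in degrees $\leq \sup(X)$ and of K-injective resolutions of $Y \in \msf{D}^+(A)$ concentrated in degrees $\geq \inf(Y)$. Everything else is formal manipulation of distinguished triangles and long exact sequences.
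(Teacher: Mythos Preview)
Your argument is correct. The paper does not actually prove this lemma; it is stated with citations to \cite[Proposition 3.3]{ShINJ} and \cite[Proof of Proposition 3.1]{YeDual}, so there is no in-paper proof to compare against. Your direct approach via the short exact sequence $0 \to I \to A \to \mrm{H}^0(A) \to 0$ with $\sup(I) \le -1$ is clean and works exactly as you describe.

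One point worth flagging: the $\sup$/$\inf$ bounds you invoke for $\Lotimes_A$ and $\RHom_A$ are, within this paper, recorded as \cref{prop:ExtLowerBound}, whose proof \emph{uses} the present lemma. So citing that proposition would be circular. You avoid this by appealing directly to the existence of semi-free resolutions $P \to X$ with $P^i = 0$ for $i > \sup(X)$ over a non-positive DG-ring, which is the right move. Note, however, that the K-injective half of your justification is not really needed (and is harder to state precisely): for the bound $\inf(\RHom_A(X,Y)) \ge \inf(Y) - \sup(X)$ it suffices to take such a semi-free $P \simeq X$ and compute $\Hom_A(P, \tau^{\ge \inf(Y)} Y)$, where the smart truncation $\tau^{\ge \inf(Y)} Y$ vanishes in degrees below $\inf(Y)$. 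That gives the vanishing directly, without needing any control on K-injective resolutions.
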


The above lemma can be used to obtain bounds on the cohomology of the derived tensor product and derived Hom functor.

\begin{lem}\label{lem:ExtLowerBound}
Let $A$ be a commutative DG-ring.
\begin{itemize}
\item[(i)] Suppose $M \in \cat{D}^{-}(A)$, $N \in \cat{D}^{+}(A)$, and let $s = \sup(M)$ and $i=\inf(N)$. Then
\[
\inf\left(\RHom_A(M,N)\right) \ge i-s.
\]
Moreover
\[
\mrm{H}^{i-s}\left(\RHom_A(M,N)\right) \cong
\opn{Hom}_{\mrm{H}^0(A)}(\mrm{H}^s(M),\mrm{H}^i(N)).
\]
\item[(ii)] Let $M, N \in \cat{D}^{-}(A)$. Then $\sup(M \Lotimes_A N) \leq \sup(M) + \sup(N)$. Moreover 
\[\mrm{H}^{\sup(M) + \sup(N)}(M \Lotimes_A N) = \mrm{H}^{\sup(M)}(M) \otimes_{\mrm{H}^0(A)} \mrm{H}^{\sup(N)}(N).\]
\end{itemize}
\end{lem}

\begin{proof}
We prove only the first item, as the second follows from a similar argument.
By adjunction, there is an isomorphism 
\[
\RHom_A(\mrm{H}^0(A),\RHom_A(M,N)) \cong
\RHom_{\mrm{H}^0(A)}(M\Lotimes_A \mrm{H}^0(A),\RHom_A(\mrm{H}^0(A),N))
\]
in $\msf{D}(\mrm{H}^0(A))$.
Therefore, we obtain that
\begin{align*}
	\inf\left(\RHom_A(M,N)\right) &= \inf(\RHom_A(\mrm{H}^0(A), \RHom_A(M,N))) &\mbox{ by \cref{lem:infcoreduction}}, \\ &= 
	\inf\left(
	\RHom_{\mrm{H}^0(A)}(M\Lotimes_A \mrm{H}^0(A),\RHom_A(\mrm{H}^0(A),N)) 
	\right) &\mbox{ by adjunction}, \\ &\ge \inf(\RHom_A(\mrm{H}^0(A),N))-\sup(M\Lotimes_A \mrm{H}^0(A)) &\mbox{ by~\cite[Lemma 2.1]{Foxby}}, \\ &= \inf(N)-\sup(M) &\mbox{ by \cref{lem:infcoreduction}}.
\end{align*}
This proves the first part of (i). For the second part, by applying \cref{lem:infcoreduction} and~\cite[Lemma 2.1]{Foxby} we obtain isomorphisms 
\begin{align*}
	\mrm{H}^{i-s}\left(\RHom_A(M,N)\right) &= 
	\mrm{H}^{i-s}\left(\RHom_{\mrm{H}^0(A)}(M\Lotimes_A \mrm{H}^0(A),\RHom_A(\mrm{H}^0(A),N))\right) \\ &=
	\opn{Hom}_{\mrm{H}^0(A)}(\mrm{H}^s(M\Lotimes_A \mrm{H}^0(A)),\mrm{H}^i(\RHom_A(\mrm{H}^0(A),N))) \\ &=
	\opn{Hom}_{\mrm{H}^0(A)}(\mrm{H}^s(M),\mrm{H}^i(N))
\end{align*}
as required.
\end{proof}
At certain stages, we will assume that our commutative DG-ring $A$ admits a dualizing DG-module. Recall that $R\in\msf{D}(A)$ is a dualizing DG-module provided the following properties hold:
\begin{itemize}
	\item[(i)] $R \in \msf{D}_\mrm{f}^+(A)$;
	\item[(ii)] the natural map $A \to \RHom_A(R,R)$ is an isomorphism;
	\item[(iii)] $\injdim_A(R) < \infty$.
\end{itemize}
    We note that the shift of any dualizing DG-module is also a dualizing DG-module, and moreover if $A$ is local then dualizing DG-modules over $A$ are unique up to shift. Therefore, we say that $R$ is normalized if $\inf(R) = -\mrm{dim}(\mrm{H}^0(A))$. We refer the reader
    to~\cite{FIJ,YeDual} for more details. 

The following proposition will be of use in subsequent sections.

\begin{prop}\label{prop:infExtDual}
	Let $A$ be a commutative noetherian DG-ring with bounded cohomology admitting a dualizing DG-module $R$. 
	Then for any $M \in \cat{D}^{+}(A)$ we have
	\[
	\inf\left(\RHom_A(R,M)\right) \le \inf(M)-\inf(R).
	\]
\end{prop}
\begin{proof}
	Write $i = \inf(M)$ and consider the $\mrm{H}^0(A)$-module $\mrm{H}^i(M)$, which is a non-zero module over the noetherian ring $\mrm{H}^0(A)$. Hence there is a prime ideal $\bar\p \in \opn{Spec}(\mrm{H}^0(A))$ such that $\bar\p \in \opn{Ass}(\mrm{H}^i(M))$. We note that $R_{\bar{\p}}$ is a dualizing DG-module over $A_{\bar{\p}}$, which, consequently, is non-zero; this implies that
	$\inf(R)\le \inf(R_{\bar{\p}}) \le \sup(R_{\bar{\p}})$.
	Using \cref{tensorhomeval} and adjunction, we have isomorphisms
	\[
	\RHom_A(R,M) \Lotimes_A A_{\bar{\p}} \cong 
	\RHom_A(R,M\Lotimes_A A_{\bar{\p}}) \cong
	\RHom_{A_{\bar{\p}}}(R_{\bar{\p}},M_{\bar{\p}}),
	\]
	and by our choice of $\bar{\p}$
	it follows that $i = \inf(M_{\bar{\p}})$.
	Let $s = \sup(R_{\bar{\p}})$.
	By \cref{lem:ExtLowerBound},
	we know
	\[
	\inf\left(\RHom_{A_{\bar{\p}}}(R_{\bar{\p}},M_{\bar{\p}})\right) \ge i-s
	\]
	and that
	\[
	\mrm{H}^{i-s}\left(\RHom_{A_{\bar{\p}}}(R_{\bar{\p}},M_{\bar{\p}})\right) \cong
	\opn{Hom}_{\mrm{H}^0(A_{\bar{\p}})}(\mrm{H}^s(R_{\bar{\p}}),\mrm{H}^i(M_{\bar{\p}})).
	\]
	By \cite[tag 0310]{SP},
	it holds that $\bar{\p}\mrm{H}^0(A_{\bar{\p}}) \in \opn{Ass}(\mrm{H}^i(M_{\bar{\p}}))$,
	while the fact that $R_{\bar{\p}}$ is a dualizing DG-module implies that $\mrm{H}^s(R_{\bar{\p}})$ is a non-zero finitely generated $\mrm{H}^0(A_{\bar{\p}})$-module.
	Since $\bar{\p}\mrm{H}^0(A_{\bar{\p}}) \in \opn{Ass}(\mrm{H}^i(M_{\bar{\p}}))$, there is an injective map \[\Hom_{\mrm{H}^0(A_{\bar{\p}})}(\mrm{H}^s(R_{\bar{\p}}),\mrm{H}^0(A_{\bar{\p}})/\bar{\p}\mrm{H}^0(A_{\bar{\p}}))\rightarrow \Hom_{\mrm{H}^0(A_{\bar{\p}})}(\mrm{H}^s(R_{\bar{\p}}),\mrm{H}^i(M_{\bar{\p}}))
	\]
	and the former is nonzero by Nakayama's lemma. Therefore
	\[
	\opn{Hom}_{\mrm{H}^0(A_{\bar{\p}})}(\mrm{H}^s(R_{\bar{\p}}),\mrm{H}^i(M_{\bar{\p}})) \ne 0,
	\]
	which shows that 
	$\inf\left(\RHom_{A_{\bar{\p}}}(R_{\bar{\p}},M_{\bar{\p}})\right) = i-s.$
	We therefore obtain that
	\[
	\inf\left(\RHom_A(R,M)\right) \le \inf\left(\RHom_{A_{\bar{\p}}}(R_{\bar{\p}},M_{\bar{\p}})\right) = i-s \le \inf(M) - \inf(R)
	\]
	as required.
\end{proof}
We note the preceding result, for complexes over rings, can be found at \cite[Proposition 2.2]{Foxby}. 
We obtain the following corollary.
\begin{cor}\label{cor:boundTns}
	Let $A$ be a commutative noetherian DG-ring with bounded cohomology,
	and let $R$ be a dualizing DG-module over $A$.
	Let $M\in \cat{D}^{\mrm{b}}(A)$ be a DG-module over $A$ of finite flat dimension.
	Then $\inf(R\Lotimes_A M) \ge \inf(R)+\inf(M)$.
\end{cor}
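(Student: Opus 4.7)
The strategy is to use the dualizing identification $\RHom_A(R,R)\simeq A$ to rewrite $M$ as $\RHom_A(R,\,R\Lotimes_A M)$, and then to read off the desired bound from \cref{prop:infExtDual} applied to $R\Lotimes_A M$.

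First I would check that $R\Lotimes_A M$ lies in $\cat{D}^{+}(A)$, so that \cref{prop:infExtDual} is applicable to it. Since $\flatdim_A(M)<\infty$, the derived tensor product may be computed via a bounded flat resolution of $M$, and tensoring this bounded-length resolution with the bounded-below $R$ produces a DG-module whose cohomology is bounded below.

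Next I would invoke the tensor-evaluation isomorphism \cref{tensoreval}(iii) with $L=R$, middle argument $R$, and $N=M$. This yields a natural isomorphism
\[
M \;\simeq\; \RHom_A(R,R) \Lotimes_A M \;\iso\; \RHom_A(R,\,R\Lotimes_A M)
\]
in $\cat{D}(A)$, where the first identification is the dualizing property. Now applying \cref{prop:infExtDual} to the bounded-below DG-module $R\Lotimes_A M$ gives
\[
\inf(M) \;=\; \inf \RHom_A(R,\, R\Lotimes_A M) \;\le\; \inf(R\Lotimes_A M) - \inf(R),
\]
which rearranges to $\inf(R\Lotimes_A M) \ge \inf(R) + \inf(M)$, as claimed.

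The main subtlety I expect is verifying the hypotheses of \cref{tensoreval}(iii): that clause requires $R \in \cat{D}^{-}_{\mrm{f}}(A)$, i.e.\ that $R$ be cohomologically bounded above, whereas the defining list in the excerpt only records $R \in \cat{D}^{+}_{\mrm{f}}(A)$. One must therefore either invoke or briefly verify the standard fact that a dualizing DG-module over a commutative noetherian DG-ring with bounded cohomology is in fact cohomologically bounded, which follows from the combination of $\injdim_A(R)<\infty$ with the endomorphism isomorphism $A\simeq\RHom_A(R,R)$.
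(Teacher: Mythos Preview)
Your proposal is correct and follows essentially the same route as the paper: both identify $M \simeq \RHom_A(R,R)\Lotimes_A M \simeq \RHom_A(R, R\Lotimes_A M)$ via \cref{tensoreval} and then apply \cref{prop:infExtDual}. You are simply more explicit than the paper about checking that $R\Lotimes_A M \in \cat{D}^+(A)$ and that $R \in \cat{D}^-_{\mrm{f}}(A)$; the latter is indeed established elsewhere in the paper (see the proof of \cref{prop:dualInjDG}, which cites \cite[Corollary~7.3]{YeDual}).
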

\begin{proof}
	Observe that
	\[
	M \cong \RHom_A(R,R)\Lotimes_A M \cong \RHom_A(R,R\Lotimes_A M)
	\]
	where the latter follows from~\cref{tensorhomeval}.
	It follows from \cref{lem:ExtLowerBound} that
	\[
	\inf M = \inf\left(\RHom_A(R,R\Lotimes_A M)\right) \le \inf(R\Lotimes_A M) - \inf(R),
	\]
	proving the claim.
\end{proof}

\subsection{Bounds for DG-modules of finite flat dimension}
We now turn our attention to the goal of proving the main theorem of this section, which provides a bound on the supremum of the derived homomorphisms between two DG-modules of finite flat dimension. Before stating and proving this theorem, we prove some intermediary results concerning injective dimension. To this end, we need a DG-version of the following fact: if $A$ is a noetherian ring, $F$ is a flat module and $E$ is an injective module, then $E\otimes_{A}F$ is also injective. This is obtained by the following formulation of Baer's criterion for DG-modules.

\begin{lem}\label{thm:injForm}
	Let $A$ be a DG-ring,
	and let $M \in \cat{D}^{\mrm{b}}(A)$.
	Then there is an equality
	\[
	\injdim_A(M) = \sup\{n\mid \opn{Ext}^n_A(\mrm{H}^0(A)/\bar{I},M) \ne 0, \mbox{ for some left ideal $\bar{I} \subseteq \mrm{H}^0(A)$}\}.
	\]
\end{lem}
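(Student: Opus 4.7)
The strategy is to reduce the claim, via the coreduction adjunction and \cref{cor:injdimcoreduction}(iii), to the classical fact that for an ordinary ring $R$ the injective dimension of a bounded-below complex is detected by $\mrm{Ext}$ against cyclic modules $R/\bar I$. Let $n$ denote the supremum on the right-hand side (with $n = -\infty$ if every such $\mrm{Ext}$ vanishes). The inequality $\injdim_A(M) \ge n$ is immediate from the definition, since $\mrm{H}^0(A)/\bar I \in \cat{D}^{\mrm{b}}(A)$ satisfies $\inf(\mrm{H}^0(A)/\bar I) = 0$; the substance is therefore the reverse inequality.

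Setting $R = \mrm{H}^0(A)$ and $M' = \RHom_A(R, M) \in \msf{D}(R)$, the extension-of-scalars adjunction yields $\mrm{Ext}^i_A(N, M) \cong \mrm{Ext}^i_R(N, M')$ for every left $R$-module $N$ placed in degree $0$. Combined with \cref{prop:zeroAmp}(i), which allows testing injective dimension against amplitude-zero DG-modules, it suffices to show that if $\mrm{Ext}^i_R(R/\bar I, M') = 0$ for all left ideals $\bar I$ and all $i > n$, then $\injdim_R(M') \le n$; \cref{cor:injdimcoreduction}(iii) then transports this bound back across the adjunction. Taking $\bar I = 0$ in the hypothesis already forces $\mrm{H}^i(M') = 0$ for $i > n$, so $M'$ is cohomologically concentrated in degrees $\le n$.

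The heart of the argument is the classical Baer truncation. Fix an injective resolution $M' \to J^\bullet$ (possibly unbounded above) and form the canonical truncation $\tau^{\le n}J^\bullet$, whose top term is $Z^n := \ker(J^n \to J^{n+1})$. Because $\mrm{H}^i(J^\bullet) = 0$ for $i > n$, the sequence $0 \to Z^n \to J^n \to J^{n+1} \to J^{n+2} \to \cdots$ is exact and is itself an injective resolution of $Z^n$. Computing $\mrm{Ext}$ from this shifted resolution gives a natural isomorphism $\mrm{Ext}^1_R(R/\bar I, Z^n) \cong \mrm{Ext}^{n+1}_R(R/\bar I, M')$, and the right-hand side vanishes for every $\bar I$ by hypothesis. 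Baer's criterion over the ordinary ring $R$ then forces $Z^n$ to be injective, so $\tau^{\le n}J^\bullet$ is a complex of injective $R$-modules concentrated in degrees $\le n$ and quasi-isomorphic to $M'$, giving $\injdim_R(M') \le n$ as desired.

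The main obstacle is really bookkeeping: one must keep the ordinary-ring side (where Baer's criterion is available in its usual form) and the DG-side (where the injective dimension is measured) synchronized, which is exactly what \cref{cor:injdimcoreduction}(iii) and the adjunction are designed to do. The degenerate cases ($n = -\infty$, i.e.\ $M' = 0$, or $\sup(M') < n$) are absorbed without incident by the truncation, which then produces the zero complex or a shorter one as appropriate.
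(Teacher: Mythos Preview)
Your proposal is correct and follows essentially the same route as the paper: both reduce to the classical cyclic-module criterion for injective dimension over the ordinary ring $R=\mrm{H}^0(A)$ via the coreduction adjunction and \cref{cor:injdimcoreduction}(iii). The only difference is that the paper cites \cite[Corollary~2.5.I]{AF} for the classical step, whereas you spell out the standard truncation-plus-Baer argument by hand.
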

\begin{proof}
	By \cref{cor:injdimcoreduction}
	there is an equality
	$
	\injdim_A(M) = \injdim_{\mrm{H}^0(A)}\left(\RHom_A(\mrm{H}^0(A),M)\right).
	$
	Combining this with \cite[Corollary 2.5.I]{AF},
	we have
	\[
	\injdim_{A}(M) = \sup\{n\mid \opn{Ext}^n_{\mrm{H}^0(A)}(\mrm{H}^0(A)/\bar{I},\mrm{RHom}_A(\mrm{H}^0(A),M)) \ne 0, \mbox{for some left ideal $\bar{I}\subseteq\mrm{H}^0(A)$}\}
	\]
	and therefore the result follows by adjunction.
\end{proof}
The following lemma yields the aforementioned DG-version of how injective dimension and flat dimension interact through tensor product.

\begin{prop}\label{prop:injdimFlatDG}
	Let $A$ be a commutative noetherian DG-ring,
	and let $J,F \in \cat{D}^{\mrm{b}}(A)$. 
	Assume that $J$ has finite injective dimension over $A$ and that $F$ has finite flat dimension over $A$.
	Then
	\[
	\injdim_A(J\Lotimes_A F) \le \injdim_A(J) + \sup(F).
	\]
\end{prop}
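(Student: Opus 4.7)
The plan is to combine the DG Baer criterion of \cref{thm:injForm} with the tensor-evaluation isomorphism of \cref{tensoreval} to reduce the injective dimension bound to a routine supremum estimate. Set $d = \injdim_A(J)$ and $s = \sup(F)$. Since $J \in \cat{D}^{\mrm{b}}(A)$ and $F$ has finite flat dimension, a direct application of \cref{dfn:homDim}(ii) shows that $\inf(J \Lotimes_A F) \ge \inf(J) - \flatdim_A(F)$, while \cref{prop:ExtLowerBound}(ii) yields $\sup(J \Lotimes_A F) \le \sup(J) + s$, so $J \Lotimes_A F \in \cat{D}^{\mrm{b}}(A)$. Invoking \cref{thm:injForm}, the task reduces to showing that
\[
\opn{Ext}^n_A(\mrm{H}^0(A)/\bar{I},\, J \Lotimes_A F) = 0
\]
for every ideal $\bar{I} \subseteq \mrm{H}^0(A)$ and every $n > d + s$.

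The main step is to pass through tensor-evaluation. With $L = \mrm{H}^0(A)/\bar{I} \in \cat{D}^{-}_{\mrm{f}}(A)$, $M = J \in \cat{D}^{+}(A)$, and $N = F$ of finite flat dimension, hypothesis (iii) of \cref{tensoreval} applies and produces an isomorphism
\[
\RHom_A(\mrm{H}^0(A)/\bar{I},\, J) \Lotimes_A F \iso \RHom_A(\mrm{H}^0(A)/\bar{I},\, J \Lotimes_A F)
\]
in $\cat{D}(A)$. Because $\injdim_A(J) = d$ and $\inf(\mrm{H}^0(A)/\bar{I}) = 0$, the definition of injective dimension gives $\opn{Ext}^i_A(\mrm{H}^0(A)/\bar{I},J) = 0$ for $i > d$, whence $\sup \RHom_A(\mrm{H}^0(A)/\bar{I},J) \le d$. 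Since $F \in \cat{D}^{\mrm{b}}(A) \subseteq \cat{D}^{-}(A)$ as well, \cref{prop:ExtLowerBound}(ii) applied to the left-hand side of the displayed isomorphism yields
\[
\sup\!\bigl(\RHom_A(\mrm{H}^0(A)/\bar{I}, J) \Lotimes_A F\bigr) \le d + s,
\]
and transferring this bound through the isomorphism gives the required vanishing of $\opn{Ext}^n_A(\mrm{H}^0(A)/\bar{I}, J \Lotimes_A F)$ for all $n > d+s$.

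The only nontrivial ingredient is the tensor-evaluation isomorphism, and verifying its hypotheses is where the finite flat dimension of $F$ and the boundedness conditions are used; this is what I would expect to be the main obstacle, since without it one cannot commute $\RHom$ past $\Lotimes F$. The rest of the argument is a bookkeeping of suprema combined with the Baer-type characterization of $\injdim$ from \cref{thm:injForm}, and needs no further input.
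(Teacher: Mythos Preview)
Your proof is correct and follows essentially the same approach as the paper's own proof: both use the tensor-evaluation isomorphism \cref{tensoreval}(iii) with $L=\mrm{H}^0(A)/\bar{I}$, bound the supremum of the resulting tensor product via \cref{prop:ExtLowerBound}(ii), and conclude with the Baer criterion \cref{thm:injForm}. Your version is slightly more careful in explicitly verifying that $J\Lotimes_A F \in \cat{D}^{\mrm{b}}(A)$ before invoking \cref{thm:injForm}, which the paper leaves implicit.
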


\begin{proof}
	Let $\bar{I} \subseteq \mrm{H}^0(A)$ be an ideal.
	The fact that $A$ is noetherian and $F$ has finite flat dimension over $A$ implies by \cref{tensorhomeval} that $\RHom_A(\mrm{H}^0(A)/\bar{I},J\Lotimes_A F) \cong \RHom_A(\mrm{H}^0(A)/\bar{I},J)\Lotimes_A F.$
	On the other hand, by definition we have that $\sup\left(\RHom_A(\mrm{H}^0(A)/\bar{I},J)\right) \le \injdim_A(J)$. Combining this with \cref{lem:ExtLowerBound}(ii) we have isomorphisms
	\begin{align*}
		\sup\left(\RHom_A(\mrm{H}^0(A)/\bar{I},J\Lotimes_A F)\right) &=
		\sup\left(\RHom_A(\mrm{H}^0(A)/\bar{I},J)\Lotimes_A F\right) \\ &\le
		\sup\left(\RHom_A(\mrm{H}^0(A)/\bar{I},J)\right) + \sup(F) \\ &\le \injdim_A(J) + \sup(F).
	\end{align*}
	Therefore, for all ideals $\bar{I} \subseteq \mrm{H}^0(A)$ and for all $n>\injdim_A(J) + \sup(F)$ we see 
	\[
	\opn{Ext}^n_A(\mrm{H}^0(A)/\bar{I},J\Lotimes_A F) = 0.
	\]
	Applying \cref{thm:injForm} to this vanishing implies that $
	\injdim_A(J\Lotimes_A F) \le \injdim_A(J) + \sup(F)$
	as required.
\end{proof}

Over a commutative noetherian local ring $A$, it is particularly helpful to know the injective dimension of a dualizing complex $R$, should it exist. In this case, it is well known (see, for example~\cite[tag 0BUJ]{SP}) that $\injdim_{A}(R)=\inf(R)+\text{dim}(A)$. The next result provides an analogy to this result in the DG-setting.

\begin{lem}\label{prop:dualInjDG}
	Let $(A,\bar{\m})$ be a noetherian local DG-ring with bounded cohomology,
	and let $R$ be a dualizing DG-module over $A$.
	Then 
	\[
	\injdim_A(R) = \inf(R) + \dim(\mrm{H}^0(A)).
	\]
\end{lem}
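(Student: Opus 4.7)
The plan is to reduce to the commutative ring case \cref{prop:dualInjRings} via the coreduction functor $\RHom_A(\mrm{H}^0(A),-)$. By \cref{cor:injdimcoreduction}(iii) we have
\[
\injdim_A(R) = \injdim_{\mrm{H}^0(A)}\bigl(\RHom_A(\mrm{H}^0(A),R)\bigr),
\]
so it suffices to analyse the complex $\bar{R} := \RHom_A(\mrm{H}^0(A),R)$ over the noetherian local ring $\mrm{H}^0(A)$.

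The crucial step is to check that $\bar{R}$ is a dualizing complex over $\mrm{H}^0(A)$. It lies in $\cat{D}^{+}(\mrm{H}^0(A))$ because $\inf(\bar{R})=\inf(R)$ by \cref{lem:infcoreduction}(i), and its finite injective dimension over $\mrm{H}^0(A)$ is exactly what the coreduction formula gives us, once we know $\injdim_A(R)<\infty$. Finite generation of the cohomologies of $\bar{R}$ over $\mrm{H}^0(A)$ follows from the fact that $R$ has finitely generated cohomology over $A$ (the amplitude is bounded since $\injdim_A(R)<\infty$ and $R\in\cat{D}^+_{\mrm{f}}(A)$), combined with the standard spectral sequence argument. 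The biduality isomorphism $\mrm{H}^0(A)\iso \RHom_{\mrm{H}^0(A)}(\bar{R},\bar{R})$ is deduced from the biduality $A\iso \RHom_A(R,R)$ by applying $\RHom_A(\mrm{H}^0(A),-)$ and using adjunction together with \cref{homeval}. This is essentially the content of the standard reduction principle for dualizing DG-modules as in~\cite{FIJ, YeBook}, so I would cite it rather than reprove it.

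Granted that $\bar{R}$ is a dualizing complex over $\mrm{H}^0(A)$, \cref{prop:dualInjRings} yields
\[
\injdim_{\mrm{H}^0(A)}(\bar{R}) = \inf(\bar{R}) + \dim(\mrm{H}^0(A)).
\]
Combining the two equalities above with the identity $\inf(\bar{R})=\inf(R)$ from \cref{lem:infcoreduction}(i), we conclude
\[
\injdim_A(R) = \inf(R) + \dim(\mrm{H}^0(A)),
\]
as claimed.

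The main potential obstacle is the verification that $\bar{R}$ is dualizing over $\mrm{H}^0(A)$; everything else is a direct application of the reduction/coreduction formulas already established in the preliminaries. If one prefers a self-contained argument, the upper bound $\injdim_A(R)\le \inf(R)+\dim(\mrm{H}^0(A))$ can be extracted from \cref{prop:infExtDual} together with \cref{thm:injForm}, while the lower bound can be obtained by computing $\opn{Ext}^{\inf(R)+\dim(\mrm{H}^0(A))}_A(\mrm{H}^0(A)/\bar{\m},R)$ via \cref{prop:ExtLowerBound}(i) and observing, after passing to a normalized shift, that this Ext coincides with $\opn{Hom}_{\mrm{H}^0(A)}(\mrm{H}^0(A)/\bar{\m},\mrm{H}^{\inf(R)}(R))$, which is non-zero because $\bar{\m} \in \opn{Ass}_{\mrm{H}^0(A)}(\mrm{H}^{\inf(R)}(\bar{R}))$ as in the proof of \cref{prop:infExtDual}.
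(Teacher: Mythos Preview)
Your main argument is correct and essentially identical to the paper's proof: the paper simply cites \cite[Proposition~7.5]{YeDual} for the fact that $\bar{R}=\RHom_A(\mrm{H}^0(A),R)$ is a dualizing complex over $\mrm{H}^0(A)$, then applies \cref{prop:dualInjRings} and \cref{lem:infcoreduction} exactly as you do.

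Your proposed alternative ``self-contained'' route at the end, however, does not work as written. For the upper bound you invoke \cref{prop:infExtDual}, but that result bounds $\inf(\RHom_A(R,M))$ with $R$ in the \emph{first} argument; it says nothing about $\sup(\RHom_A(N,R))$, which is what Baer's criterion (\cref{thm:injForm}) requires. For the lower bound you invoke \cref{prop:ExtLowerBound}(i), but for $M=\mrm{H}^0(A)/\bar{\m}$ that proposition identifies the Ext group in degree $\inf(R)-\sup(M)=\inf(R)$, not in degree $\inf(R)+\dim(\mrm{H}^0(A))$; the claimed identification of the top Ext with a Hom into $\mrm{H}^{\inf(R)}(\bar{R})$ does not follow from that lemma. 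So drop the alternative paragraph and keep the main reduction argument, citing \cite[Proposition~7.5]{YeDual} directly for the dualizing property of $\bar{R}$.
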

\begin{proof}
	The fact that $A$ has bounded cohomology implies by \cite[Corollary 7.3]{YeDual} that $R$ is bounded.
	By \cref{cor:injdimcoreduction} we know that
	\[
	\injdim_A(R) = \injdim_{\mrm{H}^0(A)}\left(\RHom_A(\mrm{H}^0(A),R)\right).
	\]
	By \cite[Proposition 7.5]{YeDual},
	$\RHom_A(\mrm{H}^0(A),R)$ is a dualizing complex over $\mrm{H}^0(A)$ and hence, combining the above with the aforementioned result for rings, we obtain that
	\[
	\injdim_A(R) = \injdim_{\mrm{H}^0(A)}(\RHom_A(\mrm{H}^0(A),R)) = \inf\left(\RHom_A(\mrm{H}^0(A),R)\right) + \dim(\mrm{H}^0(A)).
	\]
	Finally, note that \[
	\inf\left(\RHom_A(\mrm{H}^0(A),R)\right) = \inf(R),
	\]
	by \cref{lem:infcoreduction}, hence proving the result.
\end{proof}
We may now give the main theorem of this section, which will be of significant use in what follows. This extends~\cite[Theorem 2.6]{Foxby} to the DG-setting.
\begin{thm}\label{thm:ExtBoundFlat}
	Let $(A,\bar{\m})$ be a noetherian local DG-ring with bounded cohomology,
	and suppose $A$ has a dualizing DG-module $R$.
	Let $M,N \in \cat{D}^{\mrm{b}}(A)$ be two DG-modules of finite flat dimension over $A$. 
	Then 
	\[
	\sup\left(\RHom_A(M,N)\right) \le \sup(N)-\inf(M) + \dim(\mrm{H}^0(A)).
	\]
\end{thm}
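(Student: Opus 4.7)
The strategy is to mimic Foxby's argument by routing $\RHom_A(M,N)$ through the dualizing DG-module $R$, using that $R\Lotimes_A N$ has finite injective dimension while $M\Lotimes_A R$ has controlled infimum. The key identity I want is
\[
\RHom_A(M,N) \cong \RHom_A(M\Lotimes_A R,\, R\Lotimes_A N),
\]
which should reduce the bound to a combination of three ingredients already established.

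The plan is as follows. First, since $R$ is dualizing, $A\iso \RHom_A(R,R)$, so $N \cong \RHom_A(R,R)\Lotimes_A N$. Because $A$ has bounded cohomology, $R$ is bounded (by \cite[Corollary 7.3]{YeDual}), hence $R\in\cat{D}^{-}_\mrm{f}(A)\cap\cat{D}^{+}(A)$; together with $\flatdim_A(N)<\infty$, clause (iii) of \cref{tensoreval} applies and yields
\[
N \,\cong\, \RHom_A(R,R)\Lotimes_A N \,\cong\, \RHom_A(R,\, R\Lotimes_A N).
\]
Plugging this in and using the ordinary tensor-hom adjunction gives the displayed isomorphism above.

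Next, I would estimate the right-hand side. Since $N$ has finite flat dimension and $R$ has finite injective dimension, \cref{prop:injdimFlatDG} gives $\injdim_A(R\Lotimes_A N) \le \injdim_A(R)+\sup(N)$. Applied against the bounded DG-module $M\Lotimes_A R$, the definition of injective dimension (\cref{dfn:homDim}(i)) yields
\[
\sup\bigl(\RHom_A(M\Lotimes_A R,\, R\Lotimes_A N)\bigr) \le \injdim_A(R\Lotimes_A N) - \inf(M\Lotimes_A R).
\]
The lower bound $\inf(M\Lotimes_A R) \ge \inf(R)+\inf(M)$ comes directly from \cref{cor:boundTns}, applied to the finite-flat-dimensional $M$. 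Finally, \cref{prop:dualInjDG} computes $\injdim_A(R) = \inf(R) + \dim(\mrm{H}^0(A))$. Combining these three bounds, the terms $\inf(R)$ cancel and deliver the claimed inequality.

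The argument is essentially a bookkeeping exercise once the isomorphism $N\cong \RHom_A(R,R\Lotimes_A N)$ is in place, so there is no single hard step, but the one point to be careful about is that the evaluation morphism of \cref{tensoreval} applies here: this requires $R$ to be both in $\cat{D}^{-}_\mrm{f}(A)$ and in $\cat{D}^{+}(A)$, which is precisely why the hypothesis that $A$ has bounded cohomology (ensuring $R$ is bounded) is essential. Everything else is the assembly of \cref{prop:injdimFlatDG}, \cref{cor:boundTns} and \cref{prop:dualInjDG}.
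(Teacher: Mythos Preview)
Your proof is correct and is essentially identical to the paper's own argument: both establish the isomorphism $\RHom_A(M,N)\cong\RHom_A(M\Lotimes_A R, R\Lotimes_A N)$ via the dualizing property and \cref{tensoreval}(iii), and then combine \cref{cor:boundTns}, \cref{prop:injdimFlatDG}, and \cref{prop:dualInjDG} so that the $\inf(R)$ terms cancel. Your explicit remark that $R$ is bounded (needed for \cref{tensoreval}(iii)) is a point the paper leaves implicit.
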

\begin{proof}
	By definition of dualizing DG-module, \cref{tensorhomeval}, and adjunction, we have isomorphisms
	\begin{align*}
		\RHom_A(M,N) &\cong \RHom_A(M,N\Lotimes_A \RHom_A(R,R)) \\ 
		&\cong \RHom_A(M,\RHom_A(R,N\Lotimes_A R)) \\ 
		&\cong \RHom_A(M\Lotimes_A R,N\Lotimes_A R)
	\end{align*}
	in $\msf{D}(A)$. 
	By \cref{cor:boundTns}, 
	we know that $\inf(M\Lotimes_A R) \ge \inf(R)+\inf(M)$.
	
	By applying \cref{prop:injdimFlatDG} and \cref{prop:dualInjDG} in turn it follows that
	\[
	\injdim_A(N\Lotimes_A R) \le \injdim_A(R) + \sup(N) = \sup(N) + \inf(R) + \dim(\mrm{H}^0(A)).
	\]
	Combining these observations, it follows that
	\begin{align*}
		\sup\left(\RHom_A(M,N)\right) &= \sup\left(\RHom_A(M\Lotimes_A R,N\Lotimes_A R)\right) \\ 
		&\le
		\injdim_A(N\Lotimes_A R) - \inf(M\Lotimes_A R) \\ 
		&\le \sup(N) + \inf(R) + \dim(\mrm{H}^0(A)) -\inf(R) -\inf(M) \\
		&=\sup(N)-\inf(M) + \dim(\mrm{H}^0(A))
	\end{align*}
	where the first inequality holds by definition of injective dimension.
\end{proof}

\section{Bounding projective dimension over local DG-rings}\label{sec:local}
In this section we prove a bound on the projective dimension of DG-modules with finite flat dimension under the assumption that the DG-ring is local. More precisely, we will prove the following.
\begin{thm}\label{localprojdim}
Let $(A,\bar{\m})$ be a commutative noetherian local DG-ring with bounded cohomology.
Let $M \in \cat{D}^\mrm{b}(A)$, and suppose that $\flatdim_A(M) < \infty$.
Then $\projdim_A(M) \le  \dim(\mrm{H}^0(A)) -\inf(M)$.
\end{thm}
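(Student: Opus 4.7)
My plan is to reduce the general local case to the already-handled case where $A$ admits a dualizing DG-module (the result alluded to in the introduction as Theorem thm:upperBound, obtained from \cref{thm:ExtBoundFlat} and Foxby-style arguments). The reduction is via the derived $\bar{\m}$-adic completion $\hat{A} := \msf{L}\Lambda(A,\bar{\m})$. By \cref{lem:completionff}, the canonical map $A \to \hat{A}$ is faithfully flat, and $\hat{A}$ is again a noetherian local DG-ring with bounded cohomology. Since $\mrm{H}^0(\hat{A})$ is the classical $\bar{\m}$-adic completion of $\mrm{H}^0(A)$, hence a complete noetherian local ring, $\hat{A}$ admits a dualizing DG-module.

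Setting $\hat{M} := \hat{A} \Lotimes_A M$, \cref{prop:ffamp} gives $\inf(\hat{M}) = \inf(M)$ and $\hat{M} \in \cat{D}^\mrm{b}(\hat{A})$; flat base change preserves finite flat dimension, so $\flatdim_{\hat{A}}(\hat{M}) < \infty$. Applying the dualizing-DG-module case to $\hat{M}$ over $\hat{A}$ yields
\[
\projdim_{\hat{A}}(\hat{M}) \le \dim(\mrm{H}^0(\hat{A})) - \inf(\hat{M}) = \dim(\mrm{H}^0(A)) - \inf(M),
\]
where the last equality uses that classical $\bar{\m}$-adic completion of a noetherian local ring preserves Krull dimension. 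It thus suffices to establish the descent inequality $\projdim_A(M) \le \projdim_{\hat{A}}(\hat{M})$.

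For this last step, I would proceed by induction on $n := \projdim_{\hat{A}}(\hat{M})$, building an analogue of a projective resolution via \cref{prop:sppj}: choose a derived projective $P_0 \in \opn{Proj}(A)$ surjecting onto $M[\sup(M)]$ in $\mrm{H}^0$, form the syzygy triangle $M_1 \to P_0 \to M[\sup(M)]$, and iterate to produce $M_1, M_2, \ldots, M_n$. Base changing along the faithfully flat $A \to \hat{A}$ preserves derived projectivity, so $\hat{A} \Lotimes_A M_n$ is derived projective over $\hat{A}$. Using \cref{cor:injdimcoreduction}(i), projective dimension is computed at the level of reductions, i.e., $\projdim_A(M_n) = \projdim_{\mrm{H}^0(A)}(\mrm{H}^0(A) \Lotimes_A M_n)$, so the problem collapses to showing that a flat $\mrm{H}^0(A)$-module which becomes projective after the faithfully flat scalar extension $\mrm{H}^0(A) \to \mrm{H}^0(\hat{A})$ is itself projective. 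This is exactly Raynaud--Gruson's faithfully flat descent theorem for projectivity.

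The main obstacle is the descent step: Raynaud--Gruson's theorem concerns the projectivity of a single flat module, not the vanishing of higher Ext in a complex, so the work lies in setting up the syzygy framework so that the descent applies only to the $n$-th (top) syzygy — which must be verified to be a flat $\mrm{H}^0(A)$-module, a consequence of $\flatdim_A(M) < \infty$ propagating through the derived projective resolution via \cref{lem:pdshift} and \cref{prop:dgSplit}. Once the reduction to a single flat-versus-projective question is accomplished, the remainder follows cleanly from the classical faithfully flat descent in \cite{RG}.
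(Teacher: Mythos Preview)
Your overall strategy coincides with the paper's: pass to the derived completion $\widehat{A}=\msf{L}\Lambda(A,\bar\m)$, use that $\widehat{A}$ has a dualizing DG-module so that \cref{thm:upperBound} applies to $\widehat{M}=\widehat{A}\Lotimes_A M$, and then descend the projective-dimension bound along the faithfully flat map $A\to\widehat{A}$, ultimately via Raynaud--Gruson.

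The only substantive difference is in how the descent step is organised. The paper isolates it as a clean standalone statement (\cref{prop:projDimFFDG}): using $\projdim_A(M)=\projdim_{\mrm{H}^0(A)}(\mrm{H}^0(A)\Lotimes_A M)$ from \cref{cor:injdimcoreduction} on both sides reduces the question immediately to faithfully flat descent of projective dimension for a bounded-above \emph{complex} over the ordinary ring $\mrm{H}^0(A)$, and this is \cref{prop:projDimFaith}, proved by taking an honest projective resolution and applying Raynaud--Gruson to a single syzygy module. Your route instead builds an sppj resolution at the DG level first and only afterwards reduces the top syzygy to $\mrm{H}^0(A)$. This can be made to work, but it forces you to track the shifts in the sppj construction and to argue separately that $\mrm{H}^0(A)\Lotimes_A M_n$ is actually concentrated in a single degree (it is a priori a complex; one needs faithful flatness again to see this, since its base change to $\mrm{H}^0(\widehat{A})$ is a module). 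The paper's ordering---reduce to $\mrm{H}^0$ first, then resolve---sidesteps all of this bookkeeping.
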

In the next section we will globalize this result, see \cref{thm:globalfpdbound}.

A key tool in abelian categories is dimension shifting. The following lemma illustrates an analogous construction exists in $\msf{D}(A)$, and has many of the desired properties. It will be of fundamental use in the subsequent proof.

\begin{prop}\label{prop:sppj}
	Let $A$ be a DG-ring, 
	and let $M \in \cat{D}^{-}(A)$ be a non-zero DG-module.
	\begin{enumerate}
			\item\label{sppjexists} There exists a map $f\colon P\to M[\sup(M)]$ in $\cat{D}(A)$ such that $P \in \mrm{Proj}(A)$ and $\mrm{H}^0(f)$ is surjective.
			\item\label{extcalc} If we embed such a map $f$ in a distinguished triangle
			\[
			N \to P \xrightarrow{f} M[\sup(M)] \to N[1]
			\]
			in $\cat{D}(A)$,
			then for any $L \in \cat{D}^{-}(A)$ and any $n> \sup(L)$ there are natural isomorphisms
			\[
			\opn{Ext}^n_A(N,L) \cong \opn{Ext}^{n+1}_A(M[\sup(M)],L) \cong \opn{Ext}^{n+1-\sup(M)}_A(M,L).
			\]
		\end{enumerate}
\end{prop}
\begin{proof}
	For any projective $\mrm{H}^0(A)$-module $\bar{P}$, the functor $\Hom_{\mrm{H}^0(A)}(\bar{P}, \mrm{H}^0(-))\colon \msf{D}(A) \to \mrm{Mod}(\mrm{H}^0(A))$ is homological and product-preserving, so part (1) follows by applying Brown representability. For part (2), applying the functor $\RHom_A(-,L)$ and taking the long exact sequence in cohomology we obtain an exact sequence
	\[\cdots \to \mrm{Ext}_A^n(P,L) \to \mrm{Ext}_A^n(N,L) \to \mrm{Ext}_A^{n+1}(M[\sup(M)], L) \to \mrm{Ext}_A^{n+1}(P,L) \to \cdots.\]
	Since $P \in \mrm{Proj}(A)$, the outer terms vanish for $n>\sup(L)$ proving (2).
\end{proof}

We are now in a position to turn our attention to the proof of \cref{localprojdim}. The subsequent proposition provides the main body of the proof.

\begin{prop}\label{thm:upperBoundamp}
Let $(A,\bar{\m})$ be a commutative noetherian local DG-ring with bounded cohomology which has a dualizing DG-module.
Let $M \in \cat{D}^\mrm{b}(A)$ with $\amp(M) \geq \amp(A)$, and suppose that $\flatdim_A(M) < \infty$.
Then $\projdim_A(M) \le  \dim(\mrm{H}^0(A)) -\inf(M)$.
\end{prop}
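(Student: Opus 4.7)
The plan is to iterate the derived-projective resolution step provided by \cref{prop:sppj} and close out via the splitting criterion \cref{prop:dgSplit} after $k = \dim(\mrm{H}^0(A)) + \amp(M)$ iterations, controlling each stage by the Ext-bound of \cref{thm:ExtBoundFlat}. By \cref{lem:pdshift}, shifting $M$ translates both $\projdim_A(M)$ and $-\inf(M)$ by the same amount, so I will normalise $\sup(M) = 0$; the target inequality then reads $\projdim_A(M) \le \dim(\mrm{H}^0(A)) + \amp(M)$.

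Set $N_0 := M$ and iterate the first assertion of \cref{prop:sppj} to produce distinguished triangles
\[
N_{k+1} \to P_k \to N_k[\sup(N_k)] \to N_{k+1}[1]
\]
with $P_k \in \mrm{Proj}(A)$ and $\mrm{H}^0(P_k) \twoheadrightarrow \mrm{H}^0(N_k[\sup(N_k)])$. Because every $P_k$ is derived projective (hence derived flat) and $M$ has finite flat dimension, the $\mrm{Tor}$ long exact sequence shows inductively that every $N_k$ has finite flat dimension, while the cohomological long exact sequence forces $\sup(N_k) \le 0$. The Ext-shifting formula in the second part of \cref{prop:sppj} transports Ext-vanishing for $N_k$ back to Ext-vanishing for $M$ up to known shifts, so it suffices to force $N_{k-1}[\sup(N_{k-1})]$ to lie in $\mrm{Proj}(A)$.

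To achieve this, apply \cref{thm:ExtBoundFlat} to the pair $(N_{k-1}[\sup(N_{k-1})], N_k)$, both of finite flat dimension, obtaining
\[
\sup \RHom_A(N_{k-1}[\sup(N_{k-1})], N_k) \le \sup(N_k) - \inf(N_{k-1}[\sup(N_{k-1})]) + \dim(\mrm{H}^0(A)).
\]
The amplitude hypothesis enters precisely here: since $\inf(P_k) \ge \inf(A) = -\amp(A)$, the cohomological LES on the syzygy triangles forces $\inf(N_k)$ to drift downward by at most one per step starting from $\inf(N_0) = -\amp(M)$, and the assumption $\amp(M) \ge \amp(A)$ pins down this drift so that at $k = \dim(\mrm{H}^0(A)) + \amp(M)$ the right-hand side above is at most zero. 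Consequently $\mrm{Ext}^1_A(N_{k-1}[\sup(N_{k-1})], N_k) = 0$, and \cref{prop:dgSplit} applied to the $k$-th triangle concludes that $N_{k-1}[\sup(N_{k-1})]$ is a retract of $P_{k-1}$, hence a derived projective by \cref{lem:retractsofproj}. Unwinding the chain of Ext-shifts then yields $\projdim_A(M) \le k = \dim(\mrm{H}^0(A)) - \inf(M)$.

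The hardest part is the amplitude bookkeeping through the iteration: tracking $\sup(N_k)$, $\inf(N_k)$ and $\amp(N_k)$ so that the hypothesis $\amp(M) \ge \amp(A)$ synchronises the step count with the dualizing-dimension bound from \cref{thm:ExtBoundFlat}. A plausible alternative would be to pass through \cref{cor:injdimcoreduction}(i) to the classical Foxby bound over the ring $\mrm{H}^0(A)$, which inherits a dualizing complex from $R$ by \cref{prop:dualInjDG}; that route would instead require the auxiliary estimate $\inf(\mrm{H}^0(A) \Lotimes_A M) \ge \inf(M)$, which again pivots on the amplitude hypothesis.
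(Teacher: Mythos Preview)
Your overall architecture matches the paper's: normalise to $\sup(M)=0$, build the sppj resolution via \cref{prop:sppj}, use \cref{thm:ExtBoundFlat} to force an $\mrm{Ext}^1$ to vanish, conclude by \cref{prop:dgSplit} that some syzygy is derived projective, and then dimension-shift back to bound $\projdim_A(M)$. The gap is in \emph{where} you apply \cref{thm:ExtBoundFlat}.

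You apply it to the pair $(N_{k-1}[\sup(N_{k-1})],N_k)$, obtaining
\[
\sup\RHom_A\bigl(N_{k-1}[\sup(N_{k-1})],N_k\bigr)\ \le\ \sup(N_k)+\amp(N_{k-1})+\dim(\mrm{H}^0(A)),
\]
and then claim that the right-hand side drops to $\le 0$ after $k=\dim(\mrm{H}^0(A))+\amp(M)$ steps. But the amplitude hypothesis only gives $\amp(N_{k-1})\le \amp(M)$ for all $k$ (this is exactly what the cohomological LES and $\inf(P_j)\ge -\amp(A)\ge -\amp(M)$ buy you); it does \emph{not} force $\amp(N_{k-1})$ to decrease, nor does anything force $\sup(N_k)$ to become very negative. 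Generically $\sup(N_k)=0$ for all $k$, and your bound reads $\amp(N_{k-1})+\dim(\mrm{H}^0(A))\ge 0$, which never falls below $1$. So the $\mrm{Ext}^1$-vanishing you need does not follow from this direct application.

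The paper's fix is to use the Ext-shifting of \cref{prop:sppj}(2) \emph{before} invoking the bound: one identifies
\[
\mrm{Ext}^1_A\bigl(N_{k-1}[\sup(N_{k-1})],N_k\bigr)\ \cong\ \mrm{Ext}^{\,1+k-\sum_{j=0}^{k-1}\sup(N_j)}_A(M,N_k),
\]
and then applies \cref{thm:ExtBoundFlat} to $(M,N_k)$, giving $\sup\RHom_A(M,N_k)\le \sup(N_k)+\amp(M)+\dim(\mrm{H}^0(A))$. The point is that the Ext-degree on the left grows at least linearly in $k$ (each step contributes $\ge 1$ since $\sup(N_j)\le 0$), while the bound on the right is essentially constant in $k$; once the degree overtakes the bound the $\mrm{Ext}^1$ vanishes. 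Your ``alternative route'' through $\mrm{H}^0(A)$ also does not work: the estimate $\inf(\mrm{H}^0(A)\Lotimes_A M)\ge \inf(M)$ is not available under the hypothesis $\amp(M)\ge\amp(A)$ (cf.\ \cref{rem:reduction-bound}).
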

\begin{proof}
By replacing $M$ by $M[\sup(M)]$ and using \cref{lem:pdshift}, it suffices to prove the claim when $\sup(M) = 0$. In this case, we must show that $\projdim_A(M) \le  \dim(\mrm{H}^0(A)) + \amp(M)$. 
To simplify notation we write $N = \amp(M)$. 

We construct an sppj resolution of $M$, 
in the sense of \cite[Definition 2.17]{Mi}.
By \cref{prop:sppj}(\ref{sppjexists}) we may choose a map $f_0\colon P_0 \to M$ in $\cat{D}(A)$ such that $P_0 \in \mrm{Proj}(A)$ and $\mrm{H}^0(f_0)\colon \mrm{H}^0(P_0) \to \mrm{H}^0(M)$ is surjective.
As $P_0 \in \opn{Proj}(A)$, it follows that $\sup(P_0) = 0$,
and that $\amp(P_0) \le \amp(A)$ as $\mrm{H}^i(P_0) = \mrm{H}^i(P_0 \Lotimes_A A) = \mrm{H}^0(P_0) \otimes_{\mrm{H}^0(A)} \mrm{H}^i(A)$ by \cref{eqn:flat}.
We may extend the map $f_0$ to a distinguished triangle
\[
M_0 \xrightarrow{g_0} P_0 \xrightarrow{f_0} M \to M_0[1].
\]
As both $P_0$ and $M$ have finite flat dimension,
it follows that $M_0$ has finite flat dimension.

For any $i$, we have an exact sequence
\begin{equation}\label{eq:cohex}
\ldots \to \mrm{H}^{-(i+1)}(M) \to \mrm{H}^{-i}(M_0) \to \mrm{H}^{-i}(P_0) \to \ldots
\end{equation}
The fact that $\sup(P_0) = \sup(M) = 0$ and that $\mrm{H}^0(f_0)$ is surjective implies that $\sup(M_0) \le 0$.

If $i > N$, we claim that $\mrm{H}^{-i}(M_0) = 0$. Firstly, $\mrm{H}^{-i}(P_0) = 0$ since $i>N \ge \amp(A) \ge -\inf(P_0)$. On the other hand, since $i > N \ge \amp(M)$, we also deduce that $\mrm{H}^{-(i+1)}(M) = 0$ so the claim follows from the exact sequence~(\ref{eq:cohex}).
It follows that $\inf(M_0) \ge -N$.
Since it may now happen that $\sup(M_0)<0$,
we will also consider $M_0[\sup(M_0)]$.
Note that $\inf(M_0[\sup(M_0)]) = \inf(M_0) - \sup(M_0) \ge \inf(M_0) \ge -N$.

We now construct the next step of the resolution,
but observe that unlike the previous step, it might happen now that $\sup(M_0)$ is strictly less than $0$.
Thus, we now take an element $P_{-1} \in \opn{Proj}(A)$ and a map $f_{-1}:P_{-1} \to M_0[\sup(M_0)]$
such that $\mrm{H}^0(f_{-1})$ is surjective.
Then, we embed $f_{-1}$ into a distinguished triangle of the form
\[
M_{-1} \xrightarrow{g_{-1}} P_{-1} \xrightarrow{f_{-1}} M_0[\sup(M_0)] \to M_{-1}[1].
\]
We have seen above that $M_0[\sup(M_0)]$ satisfies the exact same hypothesis as $M$,
so all of the above conclusions remain valid when $M$ is replaced by $M_0[\sup(M_0)]$ and $M_0$ is replaced by $M_{-1}$. We now continue in the same fashion, until we arrive to the distinguished triangle
\[
M_{-n} \xrightarrow{g_{-n}} P_{-n} \xrightarrow{f_{-n}} M_{-(n-1)}[\sup(M_{-(n-1)})] \to M_{-n}[1].
\]
where $n = N + \dim(\mrm{H}^0(A)) + 1$.

For ease of notation we set the convention that $M_1 = M$. Let $k \leq n$, and consider the defining triangle
\[M_{-(k-i)} \to P_{-(k-i)} \to M_{-(k-i-1)}[\sup(M_{-(k-i-1)})] \to M_{-(k-i)}[1]\]
where $1 \leq i \leq k$. By applying \cref{prop:sppj}(\ref{extcalc}), for each $1 \leq i \leq k$ in this range we obtain isomorphisms
\begin{align*}
\mrm{Ext}^{i-\sum_{j=1}^{i}\sup(M_{-(k-j)})}_A(M_{-(k-i)}, M_{-k}) &\cong \mrm{Ext}^{i +1- \sup(M_{-(k-i-1)}) - \sum_{j=1}^{i}\sup(M_{-(k-j)})}_A(M_{-(k-i-1)}, M_{-k})  \\
&\cong \mrm{Ext}^{i+1-\sum_{j=1}^{i+1}\sup(M_{-(k-j)})}_A(M_{-(k-i-1)}, M_{-k})
\end{align*}
as $i-\sum_{j=1}^{i}\sup(M_{-(k-j)}) \geq i > 0 \geq \sup(M_{-k}).$
By repeatedly applying these isomorphisms we obtain a chain of isomorphisms
\begin{align*}
\opn{Ext}^1_A(M_{-(k-1)}[\sup(M_{-(k-1)})],M_{-k}) &\cong \opn{Ext}^{1-\sup(M_{-(k-1)})}_A(M_{-(k-1)},M_{-k}) \\
&\cong \opn{Ext}^{2-\sup(M_{-(k-1)})-\sup(M_{-(k-2)})}_A(M_{-(k-2)},M_{-k}) \\
&\cong \cdots \\
&\cong \opn{Ext}^{1+k-(\sum_{j=1}^{k+1} \sup(M_{-(k-j)}))}_A(M,M_{-k}) \\
&\cong \opn{Ext}^{1+k-(\sum_{j=0}^{k-1} \sup(M_{-j}))}_A(M,M_{-k}).
\end{align*}

Consider the sequence $a_i = 1 + i-(\sum_{j=0}^i \sup(M_{-j}))$.
This is a strictly increasing sequence of natural numbers,
and moreover, $a_n > n =  N + \dim(\mrm{H}^0(A)) + 1 > N + \dim(\mrm{H}^0(A))$.
Let $k$ be the smallest integer such that $a_k \ge n$.

Since $M$ (and hence $M_{-k}$) have finite flat dimension, by \cref{thm:ExtBoundFlat} it holds that
\begin{equation}\label{eqn:finalRHomSup}
\sup\left(\RHom_A(M,M_{-k})\right) \le \sup(M_{-k}) -\inf(M) +\dim(\mrm{H}^0(A)) = \sup(M_{-k}) + n-1.
\end{equation}

Now we have
\begin{align*}
1+k-\sum_{j=0}^{k-1} \sup(M_{-j}) &= a_k + \sup(M_{-k}) \\[-10pt]
&\ge n + \sup(M_{-k}) \\
&> \sup(M_{-k}) + n-1 \\
&\ge \sup\left(\RHom_A(M,M_{-k})\right)
\end{align*}
by  (\ref{eqn:finalRHomSup}) and hence it follows that
\[
\opn{Ext}^1_A(M_{-(k-1)}[\sup(M_{-(k-1)})],M_{-k}) \cong
\opn{Ext}^{1+k-(\sum_{j=0}^{k-1} \sup(M_{-j}))}_A(M,M_{-k}) \cong 0.
\]

Applying \cref{prop:dgSplit} to the distinguished triangle
\[
M_{-k} \xrightarrow{g_{-k}} P_{-k} \xrightarrow{f_{-k}} M_{-(k-1)}[\sup(M_{-(k-1)})] \to M_{-k}[1].
\]
this in turn implies that $M_{-(k-1)}[\sup(M_{-(k-1)})] \in \opn{Proj}(A)$.

In order to deduce that $\projdim_A(M) \leq n-1 = N + \mrm{dim}(\mrm{H}^0(A))$ it remains to show that for any $K \in \msf{D}^0(A)$ with $\inf(K) = \sup(K) = 0$, we have $\mrm{Ext}^n_A(M,K) = 0$. 
By repeatedly applying \cref{prop:sppj}(\ref{extcalc}) we obtain isomorphisms
\begin{align*}
\opn{Ext}^n_A(M,K) &\cong \opn{Ext}^{n-1}_A(M_0,K) \\
&\cong \opn{Ext}^{n-2+\sup(M_0)}_A(M_{-1},K) \\
&\cong \dots \\
&\cong \opn{Ext}^{n-(k-\sum_{j=0}^{k-2} \sup(M_{-j}))}_A(M_{-(k-1)},K) \\
&\cong \opn{Ext}^{n-(k-\sum_{j=0}^{k-1}\sup(M_{-j}))}_A(M_{-(k-1)}[\sup(M_{-(k-1)})],K) \\
&= \opn{Ext}^{n-a_{k-1}}_A(M_{-(k-1)}[\sup(M_{-(k-1)})],K).
\end{align*}
Since $k$ was defined to be the smallest choice for which $a_k \geq n$, it follows that $a_{k-1} < n$. Hence as $M_{-(k-1)}[\sup(M_{-(k-1)})] \in \opn{Proj}(A)$, the above $\opn{Ext}$-module is $0$,
proving the claim.
\end{proof}

Having proved the case when $\amp(M) \geq \amp(A)$, we may deduce the bound also for $M$ with $\amp(M) < \amp(A)$ as follows.
\begin{prop}\label{thm:upperBound}
Let $(A,\bar{\m})$ be a commutative noetherian local DG-ring with bounded cohomology which has a dualizing DG-module.
Let $M \in \cat{D}^\mrm{b}(A)$, and suppose that $\flatdim_A(M) < \infty$.
Then $\projdim_A(M) \le  \dim(\mrm{H}^0(A)) -\inf(M)$.
\end{prop}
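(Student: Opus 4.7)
The plan is to reduce to \cref{thm:upperBoundamp} by artificially inflating the amplitude of $M$ when it is too small. The case $\amp(M) \ge \amp(A)$ is already handled by \cref{thm:upperBoundamp}, so I would assume $\amp(M) < \amp(A)$ and set $k := \amp(A) - \amp(M) > 0$. The construction to consider is
\[
N := M \oplus M[k],
\]
chosen precisely so that the shift increases both $-\inf$ and $\projdim$ by exactly $k$, whence those two contributions will cancel when the bound we obtain for $N$ is translated back to $M$.

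Next I would verify that $N$ satisfies the hypotheses of \cref{thm:upperBoundamp}. Using $\mrm{H}^i(M[k]) = \mrm{H}^{i+k}(M)$, a routine computation gives $\sup(N) = \sup(M)$ and $\inf(N) = \inf(M) - k$, so
\[
\amp(N) = \amp(M) + k = \amp(A),
\]
which in particular is $\ge \amp(A)$. Since $M$ has finite flat dimension, so does the shift $M[k]$, and hence so does their direct sum $N$. Applying \cref{thm:upperBoundamp} to $N$ yields
\[
\projdim_A(N) \le \dim(\mrm{H}^0(A)) - \inf(N) = \dim(\mrm{H}^0(A)) - \inf(M) + k.
\]

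Finally, \cref{lem:pdshift} gives $\projdim_A(N) = \max\{\projdim_A(M), \projdim_A(M) + k\} = \projdim_A(M) + k$, since $k > 0$. Subtracting $k$ from both sides of the displayed inequality produces $\projdim_A(M) \le \dim(\mrm{H}^0(A)) - \inf(M)$, as required. I do not expect any genuine obstacle here: the heavy lifting was already done in \cref{thm:upperBoundamp}, and the only subtlety is to orient the shift so that the correction $k$ appearing on the $\inf$ side and the correction $k$ appearing on the $\projdim$ side cancel exactly.
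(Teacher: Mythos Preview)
Your argument is correct and is essentially identical to the paper's own proof: both reduce to \cref{thm:upperBoundamp} by forming $M \oplus M[k]$ with $k = \amp(A) - \amp(M)$, then use \cref{lem:pdshift} to cancel the $+k$ on each side. The only cosmetic difference is that the paper first normalizes to $\sup(M)=0$ before carrying out the computation, whereas you keep track of $\inf(M)$ directly.
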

\begin{proof}
We may assume that $\amp(M) < \amp(A)$;
otherwise the result follows directly from \cref{thm:upperBoundamp}.
By shifting if necessary, 
we may assume that $\sup(M) = 0$, 
and we must show that $\projdim_A(M) \le  \dim(\mrm{H}^0(A)) + \amp(M)$.
Let $n = \amp(A) - \amp(M) > 0$,
and let $K = M \oplus M[n]$.
Note that $\inf(K) = \inf(M)-n = \inf(A)$,
and that $\sup(K) = 0$, so that $\amp(K) = \amp(A)$.
Moreover, it is clear that $\flatdim_A(K) < \infty$.
Hence, by \cref{thm:upperBoundamp} it follows that $\projdim_A(K) \le \dim(\mrm{H}^0(A)) + \amp(A)$.
On the other hand, by \cref{lem:pdshift} we have that
\[
\projdim_A(K) = \max(\projdim_A(M),\projdim_A(M) + n) = \projdim_A(M) + n.
\]
Thus, we deduce that
\[
\projdim_A(M) \le \dim(\mrm{H}^0(A)) + \amp(A) - n = \dim(\mrm{H}^0(A)) + \amp(M),
\]
as claimed.
\end{proof}

Having treated the case when $A$ has a dualizing DG-module, we now turn to deducing the local case in full generality. In order to so, we need to state a couple of elementary properties of faithfully flat descent.
\begin{dfn}
A map $f\colon A \to B$ of commutative DG-rings is \emph{faithfully flat} if $B\in \mrm{Flat}(A)$ (equivalently, if $\flatdim_A(B) = 0$) and the induced map $\mrm{H}^0(f)\colon \mrm{H}^0(A) \to \mrm{H}^0(B)$ is faithfully flat.
\end{dfn}

\begin{lem}\label{prop:ffamp}
	Let $f\colon A \to B$ be a faithfully flat map of commutative DG-rings. Then for any $M \in \msf{D}(A)$, we have $\sup(B \Lotimes_A M) = \sup(M)$, $\inf(B \Lotimes_A M) = \inf(M)$ and $\amp(B \Lotimes_A M) = \amp(M)$. 
\end{lem}
\begin{proof}
	By \cref{eqn:flat} we have $\mrm{H}^n(B \Lotimes_A M) = \mrm{H}^0(B) \otimes_{\mrm{H}^0(A)} \mrm{H}^n(M).$ Since $\mrm{H}^0(A) \to \mrm{H}^0(B)$ is faithfully flat, $\mrm{H}^n(M) = 0$ if and only if $\mrm{H}^0(B) \otimes_{\mrm{H}^0(A)} \mrm{H}^n(M) = 0$ and the result follows.
\end{proof}

Recall that for a faithfully flat map of commutative rings $A \to B$,
we have $\projdim_B(M\Lotimes_A B) = \projdim_A(M)$ for any $M \in \msf{D}(A)$ of finite projective dimension, see for example the remark following~\cite[Theorem 14.1.21]{dcmca}. Combining this observation with \cref{cor:injdimcoreduction} and \cref{prop:ffamp} one obtains the following lemma.
\begin{lem}\label{prop:projDimFFDG}
Let $A \to B$ be a faithfully flat map of commutative DG-rings,
and let $M \in \cat{D}(A)$ be such that $\projdim_A(M) < \infty$.
Then $\projdim_B(M\Lotimes_A B) = \projdim_A(M)$.
\end{lem}

Accumulating the previous results, we are now in a position to prove \cref{localprojdim}.
\begin{proof}[Proof of \cref{localprojdim}]
Let us write $\widehat{A}:= \msf{L}\Lambda(A,\bar{\m})$
for the derived $\bar{\m}$-completion of $A$, see~\cite{Shaul2019} for more details.
The isomorphism
\[
(M \Lotimes_A \widehat{A}) \Lotimes_{\widehat{A}} - \cong M\Lotimes_A -
\]
shows that $\flatdim_{\widehat{A}}(M \Lotimes_A \widehat{A}) < \infty$.
By \cite[Proposition 7.21]{ShINJ},
the DG-ring $\widehat{A}$ has a dualizing DG-module,
and hence $\projdim_{\widehat{A}}(M \Lotimes_A \widehat{A}) \le  \dim(\mrm{H}^0(\widehat{A})) -\inf(M \Lotimes_A \widehat{A})$ by \cref{thm:upperBound}.

Since $A \to \widehat{A}$ is faithfully flat by~\cite[Corollary 4.6]{ShHomDim} and~\cite[Proposition 4.16]{Shaul2019},
by applying~\cref{prop:ffamp} we have
$\inf(M) = \inf(M \Lotimes_A \widehat{A})$.
Moreover, as $\mrm{H}^0(\widehat{A}) = \widehat{\mrm{H}^0(A)}$ by~\cite[Proposition 4.16]{Shaul2019},
we know that $\dim(\mrm{H}^0(\widehat{A})) = \dim(\mrm{H}^0(A))$.
Since
\[
\projdim_A(M) = \projdim_{\widehat{A}}(M \Lotimes_{A} \widehat{A}) 
\]
by \cref{prop:projDimFFDG}, the result follows.
\end{proof}

\section{Local-to-global for projective dimension}\label{sec:global}
The aim of this section is to remove the local assumption from \cref{localprojdim}.
There is an obstacle to performing a typical local-to-global argument, namely that, in general, projective dimension cannot be checked at localization at prime ideals. For example, there are flat modules $P$ over commutative noetherian rings $A$ of finite Krull dimension,
such that $P$ is not projective over $A$, but $P_{\p}$ is a projective $A_{\p}$-module for all $\p \in \opn{Spec}(A)$. Such examples exist even over rings as simple as $\mathbb{Z}$, see \cite[Exercise 19.12]{Eisenbud} for an example.

To overcome this obstacle we turn to semi-flat cotorsion replacements over noetherian rings of finite Krull dimension, as developed in~\cite{NT}. Recall that, over any ring $A$, a module $M$ is \textit{cotorsion} if $\text{Ext}_{A}^{1}(F,M)=0$ for every flat $A$-module $F$. Over commutative noetherian rings, the modules which are simultaneously flat and cotorsion, often called flat cotorsion modules, were completely described in~\cite{Enochs}: they are precisely the modules of the form $\prod_{\p \in \opn{Spec}(A)} T_{\p}$ where each $T_{\p}$ is the $\p$-adic completion of a free $A_{\p}$-module.

The next result is extracted from \cite{NT}.

\begin{lem}\label{prop:semiflat}
Let $A$ be a commutative noetherian ring with $d = \dim(A) < \infty$,
and let $N$ be an $A$-module.
For each $0 \le i \le d$ 
	let
	\[
	W_i = \{\p \in \opn{Spec}(A) \mid \dim(A/\p) = i\}.
	\]
	Then there exists a complex $Y \in \cat{D}(A)$ which satisfies the following properties:
	\begin{enumerate}
		\item There is an isomorphism $Y \cong N$ in $\cat{D}(A)$.
		\item There is a filtration of subcomplexes of $A$-modules
		\[
		0 = Y_{d+1} \subsetneq Y_d \subsetneq Y_{d-1} \subsetneq \dots \subsetneq Y_1 \subsetneq Y_0 = Y.
		\]
		\item For each $j$, there is a direct product decomposition
		$Y_{j-1}/Y_j \cong \prod_{\p \in W_{j-1}} T(\p)$, where $T(\p)$ is a complex of flat cotorsion $A_{\p}$-modules with cosupport contained in $\{\p\}$. 
		Moreover, it satisfies the property that $\sup(T(\p)) \le \dim(A/\p)$.
	\end{enumerate}
\end{lem}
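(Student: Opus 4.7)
The plan is to extract this statement directly from the theory of minimal semi-flat-cotorsion replacements developed by Nakamura and Thompson in \cite{NT}. As a first step, I would invoke their main construction to produce a minimal semi-flat-cotorsion replacement of $N$: a complex $Y$ of flat cotorsion $A$-modules together with a quasi-isomorphism $Y \xrightarrow{\simeq} N$ in $\cat{D}(A)$. This immediately supplies item~(1).

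Next, I would apply Enochs' classification of flat cotorsion modules to decompose each term $Y^n$ canonically as a direct product
\[
Y^n \;\cong\; \prod_{\p \in \opn{Spec}(A)} B^n(\p),
\]
where $B^n(\p)$ is the $\p$-adic completion of a free $A_{\p}$-module and has cosupport $\{\p\}$. Organizing the factors by the invariant $\dim(A/\p)$, I would set
\[
Y_j^n \;:=\; \prod_{\p\,:\,\dim(A/\p) \ge j} B^n(\p), \qquad 0 \le j \le d+1.
\]
The decisive point is then that each $Y_j$ is actually a subcomplex of $Y$, i.e.\ that the differential of $Y$ respects this product decomposition. This is where the \emph{minimality} of the replacement is essential: in the minimal resolution of \cite{NT}, the differential is triangular with respect to specialization, sending the $\p$-component of $Y^n$ into $\q$-components of $Y^{n+1}$ only when $\q \subseteq \p$, and such a specialization forces $\dim(A/\q) \ge \dim(A/\p)$. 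Granted this, the quotient is manifestly $Y_{j-1}/Y_j \cong \prod_{\p \in W_{j-1}} T(\p)$ with $T(\p) := B^{\bullet}(\p)$, and each $T(\p)$ is a complex of flat cotorsion $A_{\p}$-modules with cosupport in $\{\p\}$. This yields (2) and the cosupport part of~(3).

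Finally, for the sharp upper bound $\sup(T(\p)) \le \dim(A/\p)$, I would appeal to the corresponding degree estimate in \cite{NT}: in the minimal semi-flat-cotorsion replacement of a module, the $\p$-component is concentrated in degrees at most $\dim(A/\p)$, reflecting the fact that the relevant cosupport ``codimension'' at $\p$ is controlled by $\dim(A/\p)$. I expect the main obstacle in writing out the argument carefully is precisely the extraction from \cite{NT} of both the triangularity of the differential with respect to specialization and this sharp sup bound; both rely on the specific minimality axioms there rather than on any generic semi-flat-cotorsion resolution, so the substance of the proof is a careful translation rather than a new construction.
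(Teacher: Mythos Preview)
Your proposal is correct and follows essentially the same route as the paper: both extract the result from the Nakamura--Thompson theory, taking the minimal semi-flat-cotorsion replacement $Y=A^{\mathbb{W}}F$ of a flat resolution $F$ of $N$ and reading off the filtration and the bound $\sup(T(\p))\le\dim(A/\p)$ from the structural results in \cite{NT}. The only cosmetic difference is that the paper first passes to a flat resolution $F$ before applying $A^{\mathbb{W}}$, whereas you speak of replacing $N$ directly; this is harmless but worth making explicit, since the construction in \cite{NT} is stated for semi-flat complexes.
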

\begin{proof}
	Let $F \cong N$ be a flat resolution,
	and let $Y = A^{\mathbb{W}}F$,
	the minimal semi-flat cotorsion replacement of $F$ over $A$,
	described in \cite[Construction 3.3]{NT} and \cite[Theorem 3.4]{NT}.
	Then by definition $Y \cong F \cong N$,
	and as explained in the proof of \cite[Theorem 4.9]{NT},
	\cite[(1.17)]{NT} gives the required filtration.
\end{proof}

We now turn to the first main theorem in this paper, which gives the global case of \cref{localprojdim}. This generalizes the results of Raynaud-Gruson~\cite[Theorem II.3.2.6]{RG} to the setting of DG-rings.
\begin{thm}\label{thm:globalfpdbound}
	Let $A$ be a commutative noetherian DG-ring with bounded cohomology.
	Let $M \in \cat{D}^{\mrm{b}}(A)$, and suppose that $\flatdim_A(M) < \infty$.
	Then $\projdim_A(M) \le  \dim(\mrm{H}^0(A)) -\inf(M)$.
\end{thm}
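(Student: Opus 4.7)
The plan is to deduce the global statement from the local bound \cref{localprojdim} by applying the Nakamura--Thompson semi-flat cotorsion replacement \cref{prop:semiflat} to the noetherian ring $\mrm{H}^0(A)$. By \cref{prop:zeroAmp}(iii), it will suffice to prove that $\opn{Ext}_A^i(M, N) = 0$ for every $\mrm{H}^0(A)$-module $N$ and every $i > \dim(\mrm{H}^0(A)) - \inf(M)$, so I fix such an $N$.

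I would then apply \cref{prop:semiflat} with the ring taken to be $\mrm{H}^0(A)$ to produce a complex $Y \cong N$ in $\cat{D}(\mrm{H}^0(A))$ equipped with a finite filtration whose successive quotients decompose as $Y_{j-1}/Y_j \cong \prod_{\bar{\p} \in W_{j-1}} T(\bar{\p})$. Viewing these complexes in $\cat{D}(A)$ via restriction along $A \to \mrm{H}^0(A)$, the long exact sequences attached to the distinguished triangles $Y_j \to Y_{j-1} \to Y_{j-1}/Y_j$ will reduce the desired $\opn{Ext}$-vanishing on $Y \cong N$ by dévissage to each successive quotient. Because $\RHom_A(M,-)$ sends arbitrary products to products, this further reduces the problem to proving $\opn{Ext}_A^i(M, T(\bar{\p})) = 0$ in the required range, one prime at a time.

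The central step is analyzing a single $T(\bar{\p})$. By Enochs' classification, every flat cotorsion $\mrm{H}^0(A)_{\bar{\p}}$-module with cosupport in $\{\bar{\p}\}$ is a $\bar{\p}$-adic completion of a free $\mrm{H}^0(A)_{\bar{\p}}$-module, hence carries a natural $\mrm{H}^0(\widehat{A_{\bar{\p}}})$-module structure, where $\widehat{A_{\bar{\p}}}$ is the derived $\bar{\p}$-adic completion of the localization $A_{\bar{\p}}$. Consequently $T(\bar{\p})$ is a complex of $\mrm{H}^0(\widehat{A_{\bar{\p}}})$-modules, and the restriction-extension adjunction along $A \to \widehat{A_{\bar{\p}}}$ gives
\[
\RHom_A(M, T(\bar{\p})) \cong \RHom_{\widehat{A_{\bar{\p}}}}(M \Lotimes_A \widehat{A_{\bar{\p}}}, T(\bar{\p})).
\]
Since $\widehat{A_{\bar{\p}}}$ is a commutative noetherian local DG-ring with bounded cohomology, \cref{localprojdim} applies to $M \Lotimes_A \widehat{A_{\bar{\p}}}$ and yields $\projdim_{\widehat{A_{\bar{\p}}}}(M \Lotimes_A \widehat{A_{\bar{\p}}}) \le \mrm{height}(\bar{\p}) - \inf(M \Lotimes_A \widehat{A_{\bar{\p}}})$. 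Because $A \to A_{\bar{\p}}$ is flat and $A_{\bar{\p}} \to \widehat{A_{\bar{\p}}}$ is faithfully flat by \cref{lem:completionff}, \cref{eqn:flat} together with \cref{prop:ffamp} force $\inf(M \Lotimes_A \widehat{A_{\bar{\p}}}) \ge \inf(M)$. Combining these estimates with the general inequality $\mrm{height}(\bar{\p}) + \dim(\mrm{H}^0(A)/\bar{\p}) \le \dim(\mrm{H}^0(A))$ and the amplitude bound $\sup(T(\bar{\p})) \le \dim(\mrm{H}^0(A)/\bar{\p})$ furnished by \cref{prop:semiflat} produces exactly the cutoff $\dim(\mrm{H}^0(A)) - \inf(M)$ above which $\opn{Ext}_A^i(M, T(\bar{\p}))$ vanishes.

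The hard part I anticipate is the careful identification of the $\widehat{A_{\bar{\p}}}$-module structure on each $T(\bar{\p})$ and the verification that it is compatible with both the differential of $T(\bar{\p})$ and its embedding into the filtered complex $Y$, so that the adjunction displayed above is a genuine isomorphism in $\cat{D}(A)$ respecting the filtration. Once this structural point is secured, what remains is the arithmetic of combining the height, amplitude and infimum inequalities, which fits together precisely.
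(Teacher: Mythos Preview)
Your proposal follows the same architecture as the paper's proof: reduce via \cref{prop:zeroAmp} to $\mrm{H}^0(A)$-modules $N$, apply the Nakamura--Thompson replacement from \cref{prop:semiflat} over $\mrm{H}^0(A)$, d\'evissage along the filtration, pass through the product decomposition, and then estimate $\opn{Ext}^i_A(M,T(\bar{\p}))$ for each prime using \cref{localprojdim} together with the bounds $\sup(T(\bar{\p})) \le \dim(\mrm{H}^0(A)/\bar{\p})$ and $\mrm{height}(\bar{\p}) + \dim(\mrm{H}^0(A)/\bar{\p}) \le \dim(\mrm{H}^0(A))$.

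The one point of divergence is in handling $T(\bar{\p})$: you pass to the derived completion $\widehat{A_{\bar{\p}}}$, whereas the paper uses only the localization $A_{\bar{\p}}$. This is unnecessary. Item (3) of \cref{prop:semiflat} already states that $T(\bar{\p})$ is a complex of $\mrm{H}^0(A)_{\bar{\p}}$-modules, so via restriction along $A_{\bar{\p}} \to \mrm{H}^0(A_{\bar{\p}})$ it is immediately an object of $\cat{D}(A_{\bar{\p}})$, and adjunction gives $\opn{Ext}^i_A(M, T(\bar{\p})) \cong \opn{Ext}^i_{A_{\bar{\p}}}(M_{\bar{\p}}, T(\bar{\p}))$ with no further work. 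The ``hard part'' you anticipate---checking that the differentials of $T(\bar{\p})$ respect the completed module structure---simply evaporates. Your route is not incorrect (the differentials are indeed $\widehat{\mrm{H}^0(A)_{\bar{\p}}}$-linear, since $\mrm{H}^0(A)_{\bar{\p}}$-linear maps between $\bar{\p}$-adically complete modules are automatically linear over the completion), but it buys nothing over the simpler localization argument, and the paper's version avoids the structural verification entirely.
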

\begin{proof}
	There is nothing to prove if $\dim(\mrm{H}^0(A)) = \infty$,
	so assume that $\dim(\mrm{H}^0(A)) = d < \infty$.
	To show that $\projdim_A(M) \le  \dim(\mrm{H}^0(A)) -\inf(M)$ we use
	\cref{prop:zeroAmp}. 
	Thus, for any $N \in \cat{D}^0(A)$ we must show that $\opn{Ext}^i_A(M,N) = 0$ for all $i>\dim(\mrm{H}^0(A)) -\inf(M)$.
	By shifting if necessary, we can without loss of generality assume that $\sup(N) = \inf(N) = 0$.
	As $N \cong \mrm{H}^0(N)$, and as the latter is a $\mrm{H}^0(A)$-module,
	we may consider $N$ as a $\mrm{H}^0(A)$-module.
	We now proceed as in the proof of \cite[Theorem 4.9]{NT},
	and use \cref{prop:semiflat} to first replace $N$ by a flat resolution $F \to N$ in $\cat{D}(\mrm{H}^0(A))$,
	and then replace $F$ by $Y = A^{\mathbb{W}}F$,
	the minimal semi-flat cotorsion replacement of $N$ over $\mrm{H}^0(A)$.
	Here, we have set
	\[
	W_i = \{\bar{\p} \in \opn{Spec}(\mrm{H}^0(A)) \mid \dim \mrm{H}^0(A)/\bar{\p} = i\},
	\]
	In particular, there is an isomorphism $N \cong Y$ in $\cat{D}(\mrm{H}^0(A))$,
	and hence also in $\cat{D}(A)$,
	so $\opn{Ext}^i_A(M,N) \cong \opn{Ext}^i_A(M,Y)$.
	It is thus enough to show that 
	$\opn{Ext}^i_A(M,Y) = 0$ for all $i>\dim(\mrm{H}^0(A)) -\inf(M)$.
	As explained above, there is a filtration of subcomplexes of $\mrm{H}^0(A)$-modules:
	\[
	0 = Y_{d+1} \subsetneq Y_d \subsetneq Y_{d-1} \subsetneq \dots \subsetneq Y_1 \subsetneq Y_0 = Y.
	\]
	Take some $i>\dim(\mrm{H}^0(A)) -\inf(M)$.
	To show that $\opn{Ext}^i_A(M,Y) = 0$ we proceed by descending induction,
	and show that for all $j$ it holds that $\opn{Ext}^i_A(M,Y_j) = 0$.
	For $j=d+1$, $Y_j = 0$ so there is nothing to show.
	Assuming we have shown that $\opn{Ext}^i_A(M,Y_j) = 0$,
	consider the short exact sequence of complexes of $\mrm{H}^0(A)$-modules:
	\[
	0 \to Y_j \to Y_{j-1} \to Y_{j-1}/Y_j \to 0
	\]
	This short exact sequence of complexes give rise to a distinguished triangle
	\[
	Y_j \to Y_{j-1} \to Y_{j-1}/Y_j \to Y_j[1]
	\]
	in $\cat{D}(\mrm{H}^0(A))$, so applying the forgetful functor,
	we also obtain the same distinguished triangle in $\cat{D}(A)$.
	Applying the functor $\RHom_A(M,-)$ to it,
	we obtain a distinguished triangle
	\[
	\RHom_A(M,Y_j) \to \RHom_A(M,Y_{j-1}) \to \RHom_A(M,Y_{j-1}/Y_j) \to \RHom_A(M,Y_j)[1]
	\]
	Hence, there is an exact sequence of $\mrm{H}^0(A)$-modules of the form
	\[
	\opn{Ext}^i_A(M,Y_j) \to \opn{Ext}^i_A(M,Y_{j-1}) \to \opn{Ext}^i_A(M,Y_{j-1}/Y_j)
	\]
	By assumption, the leftmost term is zero,
	so in order for us to show that $\opn{Ext}^i_A(M,Y_{j-1}) = 0$,
	it is thus enough to show that $\opn{Ext}^i_A(M,Y_{j-1}/Y_j) = 0$.
	This was also concluded in \cite[Theorem 4.9]{NT},
	and the above shows this remains true in this setting.
	As stated in \cref{prop:semiflat},
	$Y_{j-1}/Y_j$ is a complex of $\mrm{H}^0(A)$-modules which decomposes as a direct product 
	$\prod_{\bar{\p} \in W_{j-1}} T(\bar{\p})$, where $T(\bar{\p})$ is a complex of flat cotorsion $\mrm{H}^0(A)$-modules with cosupport in $\{\bar{\p}\}$. 
	Because of this direct product decomposition, it is thus enough to show that
	$\opn{Ext}^i_A(M,T(\bar{\p})) = 0$.
	Since $T_{\bar{\p}}$ is a complex of $\mrm{H}^0(A)_{\bar{\p}}$-modules,
	we may consider it as an object of $\cat{D}(A_{\bar{\p}})$,
	which implies by adjunction that
	\[
	\opn{Ext}^i_A(M,T(\bar{\p})) \cong \opn{Ext}^i_{A_{\bar{\p}}}(M_{\bar{\p}},T(\bar{\p})).
	\]
	Note that $\flatdim_{A_{\bar{\p}}}(M_{\bar{\p}}) < \infty$,
	so by \cref{localprojdim} we know that
	\[
	\projdim_{A_{\bar{\p}}}(M_{\bar{\p}}) \le \dim(\mrm{H}^0(A_{\bar{\p}})) - \inf(M_{\bar{\p}}) \le \dim(\mrm{H}^0(A_{\bar{\p}})) - \inf(M).
	\]
	This means by definition that
	$\opn{Ext}^i_{A_{\bar{\p}}}(M_{\bar{\p}},T(\bar{\p})) = 0$
	provided that 
	\[
	i> \dim(\mrm{H}^0(A_{\bar{\p}})) - \inf(M) + \sup(T(\bar{\p})).
	\]
	By \cref{prop:semiflat},
	\begin{align*}
		\dim(\mrm{H}^0(A_{\bar{\p}})) - \inf(M) + \sup(T(\bar{\p})) &\le \dim(\mrm{H}^0(A_{\bar{\p}})) + \dim(\mrm{H}^0(A)/\bar{\p}) - \inf(M) \\ &\le \dim(\mrm{H}^0(A)) - \inf(M).
	\end{align*}
	Hence, if $i>\dim(\mrm{H}^0(A)) - \inf(M)$ then in particular
	$i > \dim(\mrm{H}^0(A_{\bar{\p}})) - \inf(M) + \sup(T(\bar{\p}))$
	which shows that $\opn{Ext}^i_A(M,T(\bar{\p})) = 0$,
	as claimed.
\end{proof}

\begin{rem}\label{rem:reduction-bound}
	Since by \cref{cor:injdimcoreduction} we know it is possible to reduce the computation of projective dimension over $A$ to a computation of projective dimension over $\mrm{H}^0(A)$,
	the reader might wonder whether it is possible to use this reduction to deduce \cref{thm:globalfpdbound} directly from the corresponding result for complexes over rings.
	However, this is not possible in general. Even though the analogous result of \cref{thm:globalfpdbound} holds for complexes over rings (see for example~\cite[Corollary 8.5.18]{dcmca}),
	the only conclusion one can obtain is that if $M \in \cat{D}^{\mrm{b}}(A)$ has $\flatdim_A(M) < \infty$,
	then
	\[
	\projdim_A(M) = \projdim_{\mrm{H}^0(A)}(M\Lotimes_A \mrm{H}^0(A)) \le \dim(\mrm{H}^0(A)) - \inf(M\Lotimes_A \mrm{H}^0(A)).
	\]
	However, we typically have no way to bound $\inf(M\Lotimes_A \mrm{H}^0(A))$.
	The best one can say in general is that
	\[
	-\inf(M\Lotimes_A \mrm{H}^0(A)) \le \flatdim_A(M),
	\]
	and hence
	\[
	\projdim_A(M) \le \dim(\mrm{H}^0(A)) + \flatdim_A(M),
	\]
	which does not provide sufficient information.
	There is one special case where one can explicitly bound $\inf(M\Lotimes_A \mrm{H}^0(A))$ for any $M$ of finite flat dimension, namely when $A$ and $\mrm{H}^0(A)$ are Gorenstein local.
	In that case, this reduction method yields a sharper bound on $\projdim_A(M)$;
	see \cref{thm:GorensteinFPD} below.
\end{rem}

\section{Constructing DG-modules of prescribed projective dimension}\label{sec:construction}
The goal of this section is to prove \cref{thm:construction}, which, given any commutative noetherian DG-ring $A$ with bounded cohomology, constructs a DG-module of projective dimension $\dim(\mrm{H}^0(A))$ and amplitude at most $\amp(A)$.
This generalizes the construction of Bass in \cite{Bass}. We will use this to give an explicit lower bound on the big finitistic dimension in \cref{cor:FPDbounds}.

For a commutative noetherian local DG-ring $(A, \bar{\m})$ with bounded cohomology, we write $\msf{R}\Gamma_{\bar{\m}}$ for the (derived) local cohomology functor, see~\cite{Shaul2019} for more details. We recall that a commutative noetherian local DG-ring $(A, \bar{\m})$ with bounded cohomology is said to be \emph{local-Cohen-Macaulay} if $\amp(\msf{R}\Gamma_{\bar\m}(A)) = \amp(A),$ and is said to be \emph{Cohen-Macaulay} if $A_{\bar{\p}}$ is local-Cohen-Macaulay for each $\bar{\p} \in \mrm{Spec}(\mrm{H}^0(A))$. We write $\mrm{CM}(A)$ for the Cohen-Macaulay locus of $A$ (i.e. those $\bar{\p} \in \mrm{Spec}(\mrm{H}^0(A))$ for which $A_{\bar\p}$ is Cohen-Macaulay), and $\mrm{LCM}(A)$ for the local-Cohen-Macaulay locus of $A$.

	We warn the reader that unlike in the case of rings, in general local-Cohen-Macaulay does not imply Cohen-Macaulay due to localization causing a drop in amplitude, see~\cite[Example 8.3]{ShCM}. However, when the spectrum of $\mrm{H}^0(A)$ is irreducible and $\mrm{H}^0(A)$ is catenary, the two definitions agree by combining~\cite[Corollary 8.7]{ShCM} and~\cite[Theorem 2.5]{shaul2021sequence}.

\begin{lem}\label{LCM_flat_descent}
	Let $f\colon (A, \bar\m) \to (B, \bar\n)$ be a flat, local map of commutative noetherian local DG-rings, and $\bar{\p} \in \mrm{Spec}(\mrm{H}^0(B))$. Write $\bar\q := \mrm{H}^0(f)^{-1}(\bar\p)$. If $B_{\bar\p}$ is local-Cohen-Macaulay then $A_{\bar\q}$ is local-Cohen-Macaulay; in other words, if $\bar\p \in \mrm{LCM}(B)$ then $\bar\q \in \mrm{LCM}(A)$. Moreover, if $\mrm{H}^0(A)$ and $\mrm{H}^0(B)$ are both catenary and have irreducible spectra, then $\bar\p \in \mrm{CM}(B)$ implies $\bar\q \in \mrm{CM}(A)$. 

\end{lem}
\begin{proof}
	Note that by definition, such an $f$ must be faithfully flat.
	By the invariance of local cohomology under base change we have an isomorphism $\msf{R}\Gamma_{\bar\q}(A_{\bar\q}) \Lotimes_{A_{\bar\q}} B_{\bar\p} \cong \msf{R}\Gamma_{\bar\p}(B_{\bar\p})$ in $\msf{D}(B_{\bar\p})$. Therefore by \cref{prop:ffamp}, we have
	\[\amp(\msf{R}\Gamma_{\bar\q}(A_{\bar\q})) = \amp(\msf{R}\Gamma_{\bar\p}(B_{\bar\p})) = \amp(B_{\bar{\p}}) = \amp(A_{\bar\q})\] as required. 
\end{proof}

Building on recent results on the loci of Cohen-Macaulay DG-rings~\cite{ShLoci}, the following lemma provides a DG-version of~\cite[Proposition 5.2]{Bass}.
\begin{lem}\label{existence_CM_curves}
Let $A$ be a commutative noetherian DG-ring with bounded cohomology such that $1\leq d = \mrm{dim}(\mrm{H}^0(A)) \leq \infty$. Then for any natural number $1 \leq n \leq d$, there exists $\bar{\p} \in \mrm{Spec}(\mrm{H}^0(A))$ such that $\bar{\p} \in \mrm{LCM}(A) \cap \mrm{CM}(\mrm{H}^0(A))$, $\mrm{ht}(\bar{\p})=n-1$ and $\dim(\mrm{H}^0(A)/\bar{\p})\geq 1$.
\end{lem}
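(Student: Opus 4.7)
The plan is to reduce to a local DG-ring equipped with a dualizing DG-module, construct the prime there using openness of the Cohen-Macaulay loci, and descend back to $A$ via faithful flatness. Since $d \geq n$, choose a maximal ideal $\bar\m \subset \mrm{H}^0(A)$ with $\mrm{ht}(\bar\m) \geq n$. Any prime $\bar\p \subsetneq \bar\m$ automatically satisfies $\dim(\mrm{H}^0(A)/\bar\p) \geq 1$, so it is enough to produce $\bar\p \subset \bar\m$ of height exactly $n-1$ lying in $\mrm{LCM}(A) \cap \mrm{CM}(\mrm{H}^0(A))$. Since these are local conditions at $\bar\m$, we may replace $A$ by $A_{\bar\m}$ and then by the derived completion $\widehat{A} := \msf{L}\Lambda(A_{\bar\m}, \bar\m)$; the composite $A \to \widehat{A}$ is faithfully flat by \cref{lem:completionff}, and $\widehat{A}$ carries a dualizing DG-module by \cite[Proposition 7.21]{ShINJ}. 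Consequently $\mrm{H}^0(\widehat{A})$ is a complete local noetherian ring, hence catenary with a dualizing complex, so its Cohen-Macaulay locus is open; by \cite{ShLoci}, $\mrm{LCM}(\widehat{A})$ is likewise open in $\mrm{Spec}(\mrm{H}^0(\widehat{A}))$.

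Within $\mrm{H}^0(\widehat{A})$, I would construct a prime $\bar{\q}$ of height exactly $n-1$ lying in $\mrm{LCM}(\widehat{A}) \cap \mrm{CM}(\mrm{H}^0(\widehat{A}))$ with $\dim(\mrm{H}^0(\widehat{A})/\bar{\q}) \geq 1$. Start from a minimal prime $\bar{\q}_0$ of maximal quotient dimension, so that $\dim(\mrm{H}^0(\widehat{A})/\bar{\q}_0) \geq n$; such a $\bar{\q}_0$ belongs automatically to both CM loci, since its localization yields an Artinian ring. In the catenary complete local domain $\mrm{H}^0(\widehat{A})/\bar{\q}_0$ of dimension at least $n$, saturated chains of primes of every length up to the dimension exist. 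Combining this with the openness of both CM loci on the component $V(\bar{\q}_0)$, where they both contain the generic point $\bar{\q}_0$, one can extract a prime $\bar{\q}$ at height $n-1$ inside their intersection, while retaining $\dim(\mrm{H}^0(\widehat{A})/\bar{\q}) \geq 1$ by catenarity.

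Finally, contract $\bar{\q}$ along the faithfully flat composite $\mrm{H}^0(A) \to \mrm{H}^0(\widehat{A})$ to obtain $\bar\p \in \mrm{Spec}(\mrm{H}^0(A))$. Arranging in the previous step that the completion fiber above the contraction of $\bar{\q}$ is zero-dimensional ensures $\mrm{ht}(\bar\p) = n-1$ and $\dim(\mrm{H}^0(A)/\bar\p) \geq 1$. The Cohen-Macaulay property descends by classical faithfully flat descent, giving $\bar\p \in \mrm{CM}(\mrm{H}^0(A))$, and $\bar\p \in \mrm{LCM}(A)$ follows from \cref{LCM_flat_descent} applied to the local flat map of DG-rings $A_{\bar\p} \to (\widehat{A})_{\bar{\q}}$.

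The hardest step will be the middle one: producing a prime of the precise height $n-1$ that simultaneously lies in both open CM loci. Although each locus is nonempty and open (containing $\bar{\q}_0$), it is not formal that their intersection meets the set of height-$(n-1)$ primes in the catenary domain. Resolving this relies essentially on the openness provided by the dualizing framework of \cite{ShLoci}, catenarity of $\mrm{H}^0(\widehat{A})$, and a careful perturbation of saturated chains of primes. A secondary subtlety is controlling the completion fibers during descent so that heights are preserved exactly.
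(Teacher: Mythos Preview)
Your overall architecture matches the paper's: localize, pass to the derived completion (which has a dualizing DG-module), use openness of the Cohen--Macaulay loci there to find the prime, and descend via \cref{LCM_flat_descent}. The middle step---producing a height-$(n-1)$ prime in the open set $\mrm{LCM}(\widehat{A})\cap\mrm{CM}(\mrm{H}^0(\widehat{A}))$ inside a catenary complete local domain---can indeed be carried out by the inductive open-neighbourhood argument you gesture at, so that part is fine even though you do not spell it out.

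The genuine gap is the descent of the height. You write ``arranging in the previous step that the completion fiber above the contraction of $\bar{\q}$ is zero-dimensional'' and call it a ``secondary subtlety'', but you give no mechanism for arranging it, and this is in fact the heart of the argument. A height-$(n-1)$ prime $\bar{\q}$ in $\mrm{H}^0(\widehat{A})$ can easily contract to a prime of strictly smaller height in $\mrm{H}^0(A)$, and nothing in your construction prevents this. The paper handles this by not merely finding \emph{one} prime upstairs, but by building an entire saturated chain $\bar{\q}_0\subsetneq\cdots\subsetneq\bar{\q}_n$ in $\mrm{H}^0(\widehat{A})$ together with a parallel chain of contractions $\bar{\p}_i=\bar{\q}_i\cap\mrm{H}^0(A)$ of the correct heights. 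The inductive step is a Bass-style prime-avoidance trick: assuming $\bar{\q}_{w-1}$ sits in an open piece $D(f)$ of the CM locus and $\mrm{ht}(\bar{\p}_{w-1})=w-1$, one chooses $t\in\bar{\p}_{w+1}$ avoiding all height-$w$ primes between $\bar{\q}_{w-1}$ and $\bar{\mrm{Q}}_{w+1}$ that lie in $V(f)$, and then picks $\bar{\q}_w$ containing $t$. This forces $\bar{\q}_w\in D(f)$ and, crucially, forces $\bar{\p}_{w-1}\subsetneq\bar{\q}_w\cap\mrm{H}^0(A)\subsetneq\bar{\p}_{w+1}$, pinning down the height of the contraction. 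Without this simultaneous control of the chain upstairs and its contraction downstairs, the final step of your proposal does not go through.
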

\begin{proof}
By localizing $A$ at a height $n$ prime of $\mrm{H}^0(A)$, we may assume $(A,\bar{\m})$ is local and $\dim(\mrm{H}^0(A))=n$. We write $\widehat{A}:= \msf{L}\Lambda(A,\bar{\m})$
for the derived $\bar{\m}$-completion of $A$. Note that ~\cite[Proposition 4.16]{Shaul2019} tells us that $\mrm{H}^0(\widehat{A})=\widehat{\mrm{H}^0(A)}$ and $\mrm{H}^{0}(A)\to \mrm{H}^{0}(\widehat{A})$ is the usual $\m$-adic completion map of rings. To obtain the prime $\bar{\p}$ as in the statement we use induction to construct a strict chain of length $n$  
\[\bar{\q}_0\subset \bar{\q}_1 \subset \dots \bar{\q}_i \subset \dots \subset \bar{\q}_n\] 
comprised of prime ideals in $\mrm{LCM}(\widehat{A})\cap \mrm{CM}(\mrm{H}^0(\widehat{A}))$ such that $\mrm{ht}(\bar{\q}_i\cap \mrm{H}^0(A))=i$ for all $i$. 

To this end, let us choose a strict chain of length $n$ consisting of prime ideals in $\mrm{H}^0(A)$, say
\[\bar{\mrm{P}}_0\subset \bar{\mrm{P}}_1\subset \dots\subset \bar{\mrm{P}}_n=\bar{\m}.\] 
Since $\widehat{\mrm{H}^0(A)}$ is faithfully flat over $\mrm{H}^0(A)$, going down holds for the extension, so there is a strict chain of length $n$ of prime ideals 
\[\bar{\mrm{Q}}_0\subset \dots \subset \bar{\mrm{Q}}_n=\widehat{\bar{\m}}\] 
in $\widehat{\mrm{H}^0(A)}$ which lies over the given chain in $\mrm{H}^0(A)$. Clearly, $\bar{\mrm{Q}}_0\in \mrm{LCM}(\widehat{A})\cap \mrm{CM}(\mrm{H}^0(\widehat{A}))$ and $\bar{\mrm{P}}_0=\bar{\mrm{Q}}_0\cap A$ has height zero. Set $\bar{\q}_0:=\bar{\mrm{Q}}_0$. By \cite[Proposition 7.21]{ShINJ} the DG-ring $\widehat{A}$ has a dualizing DG-module, so we may apply \cite[Theorem 12]{ShLoci} to choose an open neighbourhood $D(f)$ of $\bar{\q}_0$ in $\mrm{LCM}(\widehat{A}) \cap\mrm{CM}(\mrm{H}^0(\widehat{A}))$ which completes the base case.

For the inductive hypothesis, suppose that for some $1\leq w\leq n-1$, we have a strict chain of primes \[\bar{\q}_0\subset \dots \bar{\q}_{w-1}\subset  \bar{\mrm{Q}}_w\subset \dots \bar{\mrm{Q}}_n=\widehat{\bar{\m}}\] in $\widehat{\mrm{H}^0(A)}$ such that there is an open neighbourhood of $\bar{\q}_{w-1}$, say $D(f)$, contained in $\mrm{LCM}(\widehat{A})\cap\mrm{CM}(\mrm{H}^0(\widehat{A}))$ and $\mrm{ht}(\bar{\p}_i) = i$ where $\bar{\p}_i$ are defined by
\[
\bar{\p}_i :=
\begin{cases}
\bar{\q}_i\cap \mrm{H}^0(A) & \text{if $0 \leq i \leq w-1$,} \\
\bar{\mrm{Q}}_i \cap \mrm{H}^0(A) & \text{if $w \leq i \leq n$.}
\end{cases}
\]
Let us define \[\mathsf{X}_w = \{\bar\p \in \mrm{Spec}(\mrm{H}^0(A)) \mid \bar\q_{w-1} \subseteq \bar\p \subseteq \bar{\mrm{Q}}_{w+1}, \mrm{ht}(\bar{\p}) = w \text{ and } f \in \bar\p\}.\] By faithful flatness we have \[\mrm{ht}(\bar{\p}_{w+1}\widehat{\mrm{H}^0(A)})=\mrm{ht}(\bar{\p}_{w+1})=w+1,\] so $\bar{\p}_{w+1}$ is not contained in $\msf{X}_w$. Therefore we may choose an element $t\in \bar{\p}_{w+1}$ not contained in $\msf{X}_w$ and a height $w$ prime containing $t$, say $\bar{\q}_w$, which lies between $\bar{\q}_{w-1}$ and $\bar{\mrm{Q}}_{w+1}$. 

By choice, $\bar{\q}_w\in D(f)$ and therefore $\bar{\q}_w\in\mrm{LCM}(\widehat{A})\cap\mrm{CM}(\mrm{H}^0(\widehat{A}))$. Moreover, we have $\bar{\p}_{w-1}\subseteq \bar{\q}_w\cap \mrm{H}^0(A)\subseteq \bar{\p}_{w+1}$. But these inclusions are actually strict since going down ensures \[\mrm{ht}(\bar{\q}_w\cap \mrm{H}^0(A))\leq \mrm{ht}(\bar{\q}_w)=w,\] so $\bar{\q}_w\cap \mrm{H}^0(A)\neq \bar{\p}_{w+1}$. On the other hand, $t\in \bar{\q}_w\cap \mrm{H}^0(A)$, but $t\notin \bar{\p}_{w-1}$. Thus, $\mrm{ht}(\bar{\q}_w\cap \mrm{H}^0(A))=w$. Replacing $\bar{\mrm{Q}}_w$ with $\bar{\q}_w$ and $\bar{\p}_w$ with $\bar{\q}_w\cap \mrm{H}^0(A)$ concludes the induction.

As $A\rightarrow \widehat{A}$ is flat by~\cite[Corollary 4.6]{ShHomDim}, we can now use \cref{LCM_flat_descent} to conclude that $\bar{\q}_i \cap \mrm{H}^0(A)\in \mrm{LCM}(A)\cap \mrm{CM}(\mrm{H}^0(A))$, which is what we wanted to show.
\end{proof}

Before coming to the main result of this section, we interject with a brief introduction to Koszul complexes. Given an element $a \in \mrm{H}^0(A)$, we define \[K(A;a) = \mrm{cone}(A \xrightarrow{\cdot a} A)\] and given a sequence $\underline{a} = a_1,\ldots,a_n$ we write $K(A;\underline{a}) = K(A;a_1) \otimes_A \cdots \otimes_A K(A;a_n)$. We will write $\ell(\underline{a})$ for the length of the sequence $\underline{a}$. It follows easily from the definition that $K(A;\underline{a})$ is a compact object in $\msf{D}(A)$ and that moreover it is self dual up to a shift. 

If $(A,\bar{\m})$ is a noetherian local DG-ring with bounded cohomology, a sequence $\underline{a}=a_1,\dots,a_n \in \bar{\m}$ is $A$-regular if and only if
$\inf(K(A;\underline{a})) = \inf(A)$. The maximal length of $A$-regular sequence is called the sequential depth of $A$,
and is denoted by $\seqdepth_A(A)$. One can similarly define the notion of an $M$-regular sequence,
and the sequential depth of $M$ for any $M \in \cat{D}^{\mrm{b}}_{\mrm{f}}(A)$.
We refer the reader to \cite{Minamoto,ShCM,ShKoszul} for details.

The following lemma will be used in the proof of the subsequent theorem.

\begin{lem}\label{lem:infsupKoszul}
Let $A$ be a commutative noetherian DG-ring, $M \in \msf{D}^+(A)$ and $N \in \msf{D}^{-}(A)$. Then 
\[\inf(K(A;\underline{a}) \Lotimes_A M) \geq \inf(M) - \ell(\underline{a}) \quad \mrm{and} \quad \sup(\RHom_A(K(A;\underline{a}),N)) \leq \sup(N) + \ell(\underline{a}).\]
\end{lem}
\begin{proof}
Since the Koszul complex is compact and self-dual up to a shift, we have an isomorphism \[K(A;\underline{a}) \Lotimes_A M \cong \RHom_A(K(A;\underline{a}), M)[\ell(\underline{a})]\] in $\msf{D}(A)$. Therefore using \cref{lem:ExtLowerBound} we have \begin{align*}\inf(K(A;\underline{a}) \Lotimes_A M) &= \inf(\RHom_A(K(A;\underline{a}), M)) - \ell(\underline{a}) \\ &\geq \inf(M)-\sup(K(A;\underline{a})) -\ell(\underline{a})\\ &= \inf(M) - \ell(\underline{a})\end{align*}
proving the first inequality. A similar argument gives the second inequality.
\end{proof}

The following classical fact was proved in \cite[Proposition 5.4]{Bass}.
\begin{lem}\label{projdimlocalization}
	Let $A$ be a commutative noetherian ring of positive dimension. Let $P\subseteq A$  be a prime of positive codimension and $s\in A\setminus P$ such that $(P,s)\neq A$. If $a_1,\dots,a_n\in P$ are such that there images in $A_s$ form a regular sequence, then $\projdim_A(A_s/(a_1,\dots,a_n)A_s)=n+1$. 
\end{lem}

We are now in a position to state and prove the main result of this section, which yields DG-modules of a prescribed projective dimension.

\begin{thm}\label{thm:construction}
	Let $A$ be a commutative noetherian DG-ring with bounded cohomology such that $d = \dim(\mrm{H}^0(A)) \leq \infty$.
	Then for any natural number $0 \le n \le d$,
	there exists $M \in \cat{D}^{\mrm{b}}(A)$ with $\sup(M) = 0$,
	$\inf(M) \ge \inf(A)$ and $\projdim_A(M) = n$.
\end{thm}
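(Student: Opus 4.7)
My plan is to treat the trivial case $n=0$ by taking $M=A$ (which is derived projective, with $\sup(A)=0$ and $\inf(A)=\inf(A)$), and for $n \geq 1$ to adapt Bass's construction of \cref{lem1} to the DG setting by producing a Koszul-type DG-module $M = K(A_s;a_1,\dots,a_{n-1})$ over $A$ whose reduction to $\mrm{H}^0(A)$ is the Bass module $\mrm{H}^0(A)_s/(a_1,\dots,a_{n-1})$.

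First, I would apply \cref{existence_CM_curves} to obtain a prime $\bar{\p} \in \opn{Spec}(\mrm{H}^0(A))$ with $\opn{ht}(\bar{\p})=n-1$, $\bar{\p} \in \mrm{LCM}(A) \cap \mrm{CM}(\mrm{H}^0(A))$, and $\dim(\mrm{H}^0(A)/\bar{\p}) \geq 1$. The next step is to select a sequence $a_1,\dots,a_{n-1} \in \bar{\p}$ that is simultaneously $\mrm{H}^0(A)_{\bar{\p}}$-regular (in the ring sense) and $A_{\bar{\p}}$-regular (in the DG sense of \cite{ShCM}). This uses that the local-Cohen-Macaulay hypothesis on $A_{\bar{\p}}$ forces each cohomology $\mrm{H}^i(A)_{\bar{\p}}$ to be a maximal Cohen-Macaulay $\mrm{H}^0(A)_{\bar{\p}}$-module of depth $n-1$, so iterated prime avoidance yields a common regular sequence on all of them, which gives both forms of regularity at once.

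Next, I would choose the denominator $s$. The loci where $\underline a$ fails to be ring-regular on $\mrm{H}^0(A)$ or DG-regular on $A$ are closed subsets of $\opn{Spec}(\mrm{H}^0(A))$ not containing $\bar{\p}$, so their union lies inside $V(t)$ for some $t \notin \bar{\p}$. Since $\dim(\mrm{H}^0(A)/\bar{\p}) \geq 1$, there is a maximal ideal $\bar{\m} \supsetneq \bar{\p}$, from which I pick any $r \in \bar{\m} \setminus \bar{\p}$. Setting $s := tr$, primality of $\bar{\p}$ gives $s \notin \bar{\p}$, while $s \in \bar{\m}$ guarantees $(\bar{\p},s) \neq \mrm{H}^0(A)$; moreover $D(s) \subseteq D(t)$ ensures that $\underline a$ remains ring-regular on $\mrm{H}^0(A)_s$ and $A_s$-regular in the DG sense.

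Finally, set $M := K(A_s; a_1,\dots,a_{n-1})$. The $A_s$-regularity of $\underline a$ gives $\inf(M) = \inf(A_s) = \inf(A)$, while $\sup(M) = 0$ because $\mrm{H}^0(M) = \mrm{H}^0(A)_s/(\underline a) \neq 0$ (using $(\bar{\p},s) \neq \mrm{H}^0(A)$) and Koszul complexes are concentrated in non-positive degrees. By \cref{cor:injdimcoreduction}(i), $\projdim_A(M) = \projdim_{\mrm{H}^0(A)}(\mrm{H}^0(A) \Lotimes_A M)$, and using flatness of $A \to A_s$ together with associativity of the derived tensor product, one computes
\[
\mrm{H}^0(A) \Lotimes_A M \simeq K(\mrm{H}^0(A)_s;\underline a) \simeq \mrm{H}^0(A)_s/(a_1,\dots,a_{n-1}),
\]
where the second equivalence uses ring-regularity of $\underline a$ on $\mrm{H}^0(A)_s$. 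Then $\projdim_A(M) = n$ follows from \cref{lem1} applied with $n-1$ in place of $n$. The main obstacle in this strategy is the simultaneous-regularity step: producing one sequence $\underline a$ that is both DG-regular on $A$ and ring-regular on $\mrm{H}^0(A)$, which is precisely where the local-Cohen-Macaulay structure of $A_{\bar{\p}}$ is indispensable.
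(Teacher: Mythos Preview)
Your proposal is correct and follows essentially the same route as the paper: both use \cref{existence_CM_curves} to locate $\bar{\p}$, extract a sequence simultaneously $A_{\bar{\p}}$-regular and $\mrm{H}^0(A)_{\bar{\p}}$-regular (the paper cites \cite[Corollary 5.21]{ShCM} for this, while you sketch the underlying MCM argument), spread to a basic open $D(s)$ with $(\bar{\p},s)\neq\mrm{H}^0(A)$, set $M=K(A_s;\underline{a})$, and compute $\projdim_A(M)$ via \cref{cor:injdimcoreduction} and \cref{lem1}. One small correction: in general only $\inf(A_s)\ge\inf(A)$, not equality, but this is all the theorem requires.
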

\begin{proof}
	If $n=0$, $M=A$ will do. So assume $n\geq 1$.
	Choose a prime $\bar{\p}\subseteq \mrm{H}^0(A)$ as in \cref{existence_CM_curves}. By \cite[Corollary 5.21]{ShCM}, there exists a sequence $\underline{a}=a_1,\dots, a_{n-1}\in \mrm{H}^0(A)$ whose image in $\mrm{H}^0(A)_{\bar{\p}}$ forms an $A_{\bar{\p}}$ regular sequence while simultaneously being a system of parameters for $\mrm{H}^0(A)_{\bar{\p}}$. Since $\bar{\p}\in \mrm{LCM}(\mrm{H}^0(A))$, the image of this sequence in $\mrm{H}^0(A)_{\bar{\p}}$ is also $\mrm{H}^0(A)_{\bar{\p}}$-regular.
	\par 
	Since taking Koszul complexes commutes with localization, it follows that the points in $\bar{\q} \in \opn{Spec}(\mrm{H}^0(A))$ at which the images of the $a_i$ form a regular sequence on $A_{\bar{\q}}$ and $H^0(A)_{\bar{\q}}$ is an open set containing $\bar{\p}$. Take an open neighbourhood $D(f)$ of $\bar{\p}$ contained within this open set. By hypothesis, there exists $x\in \mrm{H}^0(A)\setminus \bar{\p}$ such that $(\bar{\p},x)\neq \mrm{H}^0(A)$. Let $s$ denote any lift to $A^0$ of $xf\in \mrm{H}^0(A)$. By construction, the images of the $a_i$ in $\mrm{H}^0(A)_{xf}$ form a regular sequence on $A_s$ and $\mrm{H}^0(A)_{xf}$.
	\par Consider the Koszul DG-ring $K(A_s;a_1,\dots,a_{n-1})$ of $A_s$ 
with respect to $a_1,\dots,a_{n-1}$. As $a_1,\dots,a_{n-1}$ is $A_s$-regular, we have that $\amp(K(A_s;a_1,\dots,a_{n-1})) = \amp(A_s) \leq\amp(A)$. From the definition one sees that $\mrm{H}^0(A) \Lotimes_A K(A_s;a_1,\dots,a_{n-1}) \simeq K(\mrm{H}^0(A)_{xf};a_1,\dots,a_{n-1})$ and therefore
\[\projdim_A(K(A_s;a_1,\dots,a_{n-1})) = \projdim_{\mrm{H}^0(A)}(K(\mrm{H}^0(A)_{xf};a_1,\dots,a_{n-1})) = n\]
by \cref{cor:injdimcoreduction} and \cref{projdimlocalization}. Consequently, taking $M := K(A_s;a_1,\dots,a_{n-1})$ provides the DG-module required for the statement.
\end{proof}

The final goal of this section is calculating the flat dimension of the module $M$ constructed in the previous theorem. In order to do this, we require the following.
\begin{lem}\label{prop:fg_flat_proj}
Let $A$ be a commutative noetherian DG-ring. For any $M \in \msf{D}^-_\mrm{f}(A)$ we have $\projdim_A(M) = \flatdim_A(M)$.
\end{lem}
\begin{proof}
Since $\mrm{H}^{0}(A)\Lotimes_{A}M\in\D_\mrm{f}^\mrm{-}(\mrm{H}^{0}(A))$ by \cref{lem:infcoreduction}, by applying~\cite[Corollary 2.10.F]{AF} we have \[\flatdim_{\mrm{H}^{0}(A)}(\mrm{H}^0(A) \Lotimes_A M)=\projdim_{\mrm{H}^{0}(A)}(\mrm{H}^0(A) \Lotimes_A M).\] In particular, we see that $\projdim_{A}(M)=\flatdim_{A}(M)$ by \cref{cor:injdimcoreduction}.
\end{proof}

The following result shows that the projective dimension of Koszul complexes is as expected. 

\begin{lem}\label{prop:projdimKoszul}
Let $A$ be a commutative DG-ring, and let $\underline{a}$ be a finite sequence of elements in $\mrm{H}^0(A)$ which generates a proper ideal. 
Then 
$\projdim_A(K(A;\underline{a})) = \ell(\underline{a}).$
\end{lem}
\begin{proof}
For any $N \in \msf{D}^{\mrm{b}}(A)$ we have 
$\sup(\RHom_A(K(A;\underline{a}),N)) \leq \sup(N) + \ell(\underline{a})$ by~\cref{lem:infsupKoszul}.
This shows that $\projdim_A(K(A;\underline{a})) \leq \ell(\underline{a})$. 
On the other hand, note that since
\[
\RHom_A(K(A;\underline{a}),A) = K(A;\underline{a})[-\ell(\underline{a})]
\]
it follows that $\opn{Ext}^{\ell(\underline{a})}_A(K(A;\underline{a}),A) = \mrm{H}^0(A)/\underline{a} \ne 0$,
and this shows that $\projdim_A(K(A;\underline{a})) \ge \ell(\underline{a})$ as $\sup(A) = 0$.
\end{proof}

Using the above results, we can calculate the flat dimension of the module constructed in \cref{thm:construction}.
\begin{prop}\label{cor:FFD_lowerBD}
Let $A$ be a commutative noetherian DG-ring with bounded cohomology such that $1\leq d=\dim(\mrm{H}^0(A))\leq \infty$. Then, for any natural number $1\leq n\leq d$, there exists $M \in \cat{D}^{\mrm{b}}(A)$ with $\flatdim_A(M)=n-1$, $\sup(M)=0$ and $\inf(M)\ge\inf(A)$.
\end{prop}
\begin{proof}
If $n=1$, take $M=A$. So we fix $n\geq 2$ and consider the module $M \in \cat{D}^{\mrm{b}}(A)$ constructed in \cref{thm:construction} for this choice of $n$. With notation as in the proof of \cref{thm:construction}, since $M\in \msf{D}^\mrm{b}_\mrm{f}(A_s)$, we have $\flatdim_{A_s}(M)=\projdim_{A_s}(M)=n-1$ by applying \cref{prop:fg_flat_proj} and \cref{prop:projdimKoszul} in turn. Since $A_s$ is $A$-flat, it follows that $\flatdim_A(M)=n-1$.
\end{proof}

\section{Finitistic dimensions and Bass's questions}\label{sec:fpd}
In this section we collate the bounds obtained so far to approach Bass's two questions for the finitistic projective dimensions over commutative noetherian DG-rings with bounded cohomology. Afterwards, we also consider the finitistic dimensions with respect to flat and injective dimension.

\subsection{Finitistic projective dimensions} 
Firstly we recall the dimensions of the small and the big finitistic projective dimensions.
\begin{dfn}\label{deffpd}
Let $A$ be a commutative DG-ring.
\begin{itemize}
\item[(i)] The \emph{small finitistic projective dimension} of $A$ is defined by \[\msf{fpd}(A) = \sup\{\projdim_A(M) + \inf(M) \mid M \in \msf{D}^\mrm{b}_\mrm{f}(A),~\projdim_A(M) < \infty\}.\]
\item[(ii)] The \emph{big finitistic projective dimension} of $A$ is defined by \[\msf{FPD}(A) = \sup\{\projdim_A(M) + \inf(M) \mid M \in \msf{D}^\mrm{b}(A),~\projdim_A(M) < \infty\}.\]
\end{itemize}
\end{dfn}

We now state our bounds on the big finitistic projective dimension, which is the cumulation of our efforts so far.

\begin{thm}\label{cor:FPDbounds}
Let $A$ be a commutative noetherian DG-ring with bounded cohomology, and suppose that $\mrm{dim}(\mrm{H}^0(A)) < \infty$. Then \[\mrm{dim}(\mrm{H}^0(A)) - \amp(A) \leq \msf{FPD}(A) \leq \mrm{dim}(\mrm{H}^0(A)).\]
\end{thm}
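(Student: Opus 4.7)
The proof is a direct synthesis of the two main theorems established earlier in the paper, so the plan is short: deduce the upper bound from \cref{thm:globalfpdbound} and the lower bound from \cref{thm:construction}.

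For the upper bound, the plan is to take an arbitrary $M \in \cat{D}^{\mrm{b}}(A)$ with $\projdim_A(M) < \infty$ and first observe that $\flatdim_A(M) < \infty$. This can be seen using \cref{cor:injdimcoreduction}: finiteness of $\projdim_A(M)$ transfers to finiteness of $\projdim_{\mrm{H}^0(A)}(\mrm{H}^0(A) \Lotimes_A M)$, which in turn implies finite flat dimension for this reduction over the ring $\mrm{H}^0(A)$, and then $\flatdim_A(M) < \infty$ again by \cref{cor:injdimcoreduction}. With $\flatdim_A(M) < \infty$ in hand, \cref{thm:globalfpdbound} applies and yields $\projdim_A(M) + \inf(M) \le \dim(\mrm{H}^0(A))$. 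Taking the supremum over all admissible $M$ gives $\msf{FPD}(A) \le \dim(\mrm{H}^0(A))$.

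For the lower bound, the plan is to apply \cref{thm:construction} with $n = \dim(\mrm{H}^0(A))$, which is a legal choice as $\dim(\mrm{H}^0(A)) < \infty$. This produces $M \in \cat{D}^{\mrm{b}}(A)$ with $\sup(M) = 0$, $\inf(M) \ge \inf(A)$ and $\projdim_A(M) = \dim(\mrm{H}^0(A))$. Since $A$ is non-positively graded we have $\sup(A) = 0$, so $\inf(A) = -\amp(A)$, and thus
\[
\projdim_A(M) + \inf(M) \ge \dim(\mrm{H}^0(A)) + \inf(A) = \dim(\mrm{H}^0(A)) - \amp(A).
\]
Because this $M$ participates in the supremum defining $\msf{FPD}(A)$, the desired inequality $\dim(\mrm{H}^0(A)) - \amp(A) \le \msf{FPD}(A)$ follows.

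There is no real obstacle here: all the difficulty has been absorbed into \cref{thm:globalfpdbound} (the upper bound, which required Foxby's dualizing DG-module technology, the derived completion, and Nakamura--Thompson's minimal semi-flat-cotorsion replacements) and \cref{thm:construction} (the lower bound, which relied on the Cohen--Macaulay locus results and a Koszul complex construction generalizing Bass). The only minor subtlety worth spelling out in the write-up is the implication $\projdim_A(M) < \infty \Rightarrow \flatdim_A(M) < \infty$, which is invoked silently via \cref{cor:injdimcoreduction} and the classical fact for rings.
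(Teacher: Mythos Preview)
Your proof is correct and follows exactly the paper's approach: the upper bound comes from \cref{thm:globalfpdbound} and the lower bound from \cref{thm:construction} applied with $n=\dim(\mrm{H}^0(A))$. The only difference is that you make explicit the implication $\projdim_A(M)<\infty\Rightarrow\flatdim_A(M)<\infty$ needed to invoke \cref{thm:globalfpdbound}, which the paper leaves implicit.
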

\begin{proof}
The upper bound for $\msf{FPD}(A)$ follows from \cref{thm:globalfpdbound}. By \cref{thm:construction}, there exists a DG-module $M \in \msf{D}^\mrm{b}(A)$ with $\projdim_A(M) = \mrm{dim}(\mrm{H}^0(A))$ and $\inf(M) \geq -\amp(A)$ which proves the lower bound.
\end{proof}

\begin{rem}
Let us contrast the above bounds to those in the case of modules, or even complexes, over commutative noetherian rings. We highlight that the bounds in the above theorem cannot be improved, as is shown in \cref{sec:examples}. In particular, we may find commutative noetherian DG-rings with bounded cohomology whose big finitistic projective dimension is strictly less than $\mrm{dim}(\mrm{H}^{0}(A))$. This is a stark difference to the case of commutative noetherian rings. If $A$ is an ordinary commutative noetherian ring, then it is known (see, for example, \cite[E8.5.4]{dcmca}) that the finitistic projective dimension of $A$ (defined as in~\cref{deffpd}) is equal to $\dim(A)$. Note that this is incorporated into the preceding theorem, simply by considering the case when the DG-ring has amplitude zero.
\end{rem}

We now turn to identifying the small finitistic projective dimension. To begin with, we treat the local case.  
The following is an easy consequence of the DG-Auslander-Buchsbaum formula~\cite[Theorem 2.16]{Minamoto}.
\begin{lem}\label{ABformula}
Let $(A,\bar{\m})$ be a commutative noetherian local DG-ring with bounded cohomology, and $M \in \msf{D}^\mrm{b}_\mrm{f}(A)$. 
If $\projdim_A(M) < \infty$,
then $\projdim_A(M) \leq \seqdepth_A(A) - \inf(M) - \amp(A)$. 
\end{lem}

Using \cref{ABformula} we may now determine the small finitistic projective dimension for local DG-rings.

\begin{prop}\label{smallfindimdepth}
Let $(A,\bar{\m})$ be a commutative noetherian local DG-ring with bounded cohomology. Then $\msf{fpd}(A) = \seqdepth_A(A) - \amp(A)$.
\end{prop}

\begin{proof}
We have that $\msf{fpd}(A) \leq \seqdepth_A(A) - \amp(A)$ by \cref{ABformula}. 
The converse inequality holds by taking an $A$-regular sequence of length $\seqdepth_A(A)$ and applying \cref{prop:projdimKoszul},
since if $\underline{a}$ is $A$-regular then $\inf(K(A;\underline{a})) = \inf(A) = -\amp(A)$.
\end{proof}

By combining the previous result with~\cite[Corollary 5.5]{ShCM} we obtain the following characterization of local-Cohen-Macaulay DG-rings.
\begin{cor}\label{cmFPD}
Let $(A,\bar{\m})$ be a commutative noetherian local DG-ring with bounded cohomology.
Then $A$ is local-Cohen-Macaulay if and only if $\msf{fpd}(A) = \dim(\mrm{H}^0(A)) - \amp(A)$.
\end{cor}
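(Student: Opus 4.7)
The plan is a direct combination of the two ingredients stated just before the corollary. By \cref{smallfindimdepth}, we already have the identity $\msf{fpd}(A) = \seqdepth_A(A) - \amp(A)$, so the claimed equality $\msf{fpd}(A) = \dim(\mrm{H}^0(A)) - \amp(A)$ is equivalent to $\seqdepth_A(A) = \dim(\mrm{H}^0(A))$.

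Thus the whole proof reduces to invoking \cite[Corollary 5.5]{ShCM}, which characterizes the local-Cohen-Macaulay property of $(A,\bar{\m})$ as the equality $\seqdepth_A(A) = \dim(\mrm{H}^0(A))$. I would first subtract $\amp(A)$ from both sides of this characterization to pass from the sequential depth to the finitistic dimension, and then substitute into the formula of \cref{smallfindimdepth}. This yields the ``only if'' direction immediately. The ``if'' direction is just the reverse substitution: if $\msf{fpd}(A) = \dim(\mrm{H}^0(A)) - \amp(A)$, combining with $\msf{fpd}(A) = \seqdepth_A(A) - \amp(A)$ forces $\seqdepth_A(A) = \dim(\mrm{H}^0(A))$, and again by the cited result $A$ is local-Cohen-Macaulay.

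There is essentially no obstacle here, since both statements being combined are already in the literature in the precise form needed; the only thing to check is that $\amp(A)$ is finite (which is part of our standing hypothesis of bounded cohomology) so that the subtraction is meaningful. Consequently this is a one-line corollary rather than an argument requiring genuinely new input.
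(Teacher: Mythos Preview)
Your proposal is correct and matches the paper's approach exactly: the paper simply states that the corollary follows by combining \cref{smallfindimdepth} with \cite[Corollary 5.5]{ShCM}, which is precisely what you do.
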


\begin{rem}
As we stated in the introduction, a commutative noetherian local ring $A$ is Cohen-Macaulay if and only if $\msf{fpd}(A)=\msf{FPD}(A)$. We are now in a position to consider whether this characterisation also holds in the world of DG-rings. Unfortunately, the question remains open: if $(A,\bar{\m})$ is a local-Cohen-Macaulay DG-ring, then \cref{cmFPD} tells us that $\msf{fpd}(A)=\dim(\mrm{H}^0(A)) - \amp(A)$. However, \cref{cor:FPDbounds} tells us that we may only deduce that $\msf{fpd}(A)\leq \msf{FPD}(A)\leq \mrm{dim}(\mrm{H}^{0}(A))$. In \cref{cor:Gorenstein} we consider a special case regarding certain Gorenstein DG-rings, which provides some but not conclusive evidence for the analogous statement in the DG-world.
\end{rem}

We now turn to the global case, which is our final result on finitistic projective dimension.
\begin{prop}\label{lem:projdimlocal}
Let $A$ be a commutative noetherian DG-ring. For any $M \in \msf{D}^{-}_\mrm{f}(A)$ we have \[\projdim_A(M) = \sup\{\projdim_{A_{\bar\p}}(M_{\bar\p}) \mid \bar\p \in \mrm{Spec}(\mrm{H}^0(A))\}.\]
Consequently $\msf{fpd}(A) \leq \sup\{\msf{fpd}(A_{\bar\p}) \mid \bar\p \in \mrm{Spec}(\mrm{H}^0(A))\}$.
\end{prop}
\begin{proof}
By applying \cref{cor:injdimcoreduction} and~\cite[Proposition 5.3.P]{AF}, we have 
\begin{align*}
\projdim_A(M) = \sup\{\projdim_{\mrm{H}^0(A)}((\mrm{H}^0(A) \Lotimes_A M)_{\bar\p}) \mid \bar\p \in \mrm{Spec}(\mrm{H}^0(A)\}. \end{align*}
Since $(\mrm{H}^0(A) \Lotimes_A M)_{\bar\p} \cong \mrm{H}^0(A) \Lotimes_A M_{\bar\p}$, another application of \cref{cor:injdimcoreduction} completes the proof of the first part of the statement. The second part of the statement follows, since $\inf(M_{\bar\p}) \geq \inf(M)$.
\end{proof}

\subsection{Finitistic flat and injective dimensions} 
We now investigate the finitistic flat and injective dimensions, which we will see are very closely related. Both are defined similarly to the projective case. We first consider the big finitistic dimensions.
\begin{dfn}
Let $A$ be a commutative DG-ring.
\begin{itemize}
\item[(i)] The \emph{big finitistic flat dimension} of $A$ is defined by \[\msf{FFD}(A) = \sup\{\flatdim_A(M) + \inf(M) \mid M \in \msf{D}^\mrm{b}(A),~\flatdim_A(M) < \infty\}.\]
\item[(ii)] The \emph{big finitistic injective dimension} of $A$ is defined by \[\msf{FID}(A) = \sup\{\injdim_A(M) - \sup(M) \mid M \in \msf{D}^\mrm{b}(A),~\injdim_A(M) < \infty\}.\]
\end{itemize}
\end{dfn}

Firstly, we give bounds for $\msf{FFD}(A)$ using the results of the previous sections.
\begin{lem}\label{FFDbounds}
Let $A$ be a commutative noetherian DG-ring with bounded cohomology and $\mrm{dim}(\mrm{H}^0(A)) < \infty$. Then
\[\mrm{dim}(\mrm{H}^0(A)) - \amp(A) - 1 \leq \msf{FFD}(A) \leq \mrm{dim}(\mrm{H}^0(A)).\]
\end{lem}
\begin{proof}
It is clear that $\msf{FFD}(A) \leq \msf{FPD}(A)$ and so we obtain the upper bound from \cref{cor:FPDbounds}. The lower bound follows from \cref{cor:FFD_lowerBD}.
\end{proof}

We now show that the big finitistic injective dimension and the big finitistic flat dimension are equal.
Fix a faithfully injective $\mrm{H}^0(A)$-module $\bar{E}$. By Brown representability, there exists an object $E\in\D(A)$ such that for all $M \in \msf{D}(A)$ we have
\begin{itemize}
\item[(i)] $\RHom_{A}(M,E)\cong 0$ if and only if $M \cong 0$; and,
\item[(ii)] $\mrm{H}^{i}(\RHom_{A}(M,E))\cong \Hom_{\mrm{H}^{0}(A)}(\mrm{H}^{-i}(M),\bar{E})$ for all $i\in\mbb{Z}$. 
\end{itemize} 
For brevity we write $\mbb{I}M = \RHom_A(M,E)$. We refer the reader to~\cite[Section 3.1]{dualitypairs} and \cite{ShINJ} for more details on this construction. It follows from (ii) that we have the equalities
\begin{equation}\label{eqn:BCsup}
\sup(\mbb{I}M) = -\inf(M) \quad \text{and} \quad \inf(\mbb{I}M) = -\sup(M)
\end{equation}
for all $M \in \msf{D}(A)$. 

\begin{lem}\label{BCflatinj}
Let $A$ be a commutative noetherian DG-ring.
\begin{itemize}
	\item[(i)] For any $M \in \msf{D}(A)$, $\flatdim_{A}(M)=\injdim_{A}(\RHom_{A}(M,E))$.
	\item[(ii)] For any $M \in \msf{D}^+(A)$, $\injdim_{A}(M)=\flatdim_{A}(\RHom_{A}(M,E))$.
\end{itemize}
\end{lem}
\begin{proof}
For any ideal $\bar{I} \subseteq \mrm{H}^0(A)$,
by adjunction there is an isomorphism
\[
\RHom_{A}(\mrm{H}^{0}(A)/\bar{I},\RHom_{A}(M,E))\cong \RHom_{A}(\mrm{H}^{0}(A)/\bar{I}\Lotimes_{A}M,E)
\]
for any $M\in\D(A)$. In particular, this isomorphism and the fact that $\mrm{H}^{i}(\RHom_{A}(M,E))\cong \Hom_{\mrm{H}^{0}(A)}(\mrm{H}^{-i}(M),\bar{E})$ proves (i). For part (ii), we have
\begin{align*}
\text{Tor}_{i}^{A}(\mrm{H}^{0}(A)/\bar{I},\RHom_{A}(M,E)) &= \mrm{H}^{-i}(\mrm{H}^{0}(A)/\bar{I}\Lotimes_{A}\RHom_{A}(M,E))\\
&\cong \mrm{H}^{-i}(\RHom_{A}(\RHom_{A}(\mrm{H}^{0}(A)/\bar{I},M),E)) \mbox{ by \cref{tensorhomeval}}\\
&\cong \Hom_{\mrm{H}^{0}A}(\text{Ext}_{A}^{i}(\mrm{H}^{0}(A)/\bar{I},M),\bar{E}).
\end{align*}
In particular, using \cref{thm:injForm} and its analogue in the flat case, we see that $\flatdim_A(\RHom_{A}(M,E)) = \injdim_A(M)$ as required.
\end{proof}

\begin{rem}
In other words, the previous result shows that $(\msf{F}_n, \msf{I}_n)$ is symmetric duality pair in the sense of~\cite{dualitypairs}, where $\msf{F}_n$ (resp., $\msf{I}_n$) consists of those $M \in \msf{D}^\mrm{b}(A)$ for which $\flatdim_A(M) \leq n$ (resp., $\injdim_A(M) \leq n$).
\end{rem}

Using the previous result we obtain the following.
\begin{prop}\label{cor:FIDequalFFD}
Let $A$ be a commutative noetherian DG-ring. Then $\msf{FFD}(A)=\msf{FID}(A)$.
\end{prop}
\begin{proof}
We may assume that $\msf{FFD}(A)$ is finite, otherwise, it follows that $\msf{FID}(A)$ is also infinite from \cref{BCflatinj}. Suppose $M \in \msf{D}^\mrm{b}(A)$ has $\flatdim_A(M) + \inf(M) = \msf{FFD}(A).$ Then \[\injdim_{A}(\mbb{I}M) - \sup(\mbb{I}M) =\msf{FFD}(A)\] by  \cref{eqn:BCsup} and \cref{BCflatinj}, and hence $\msf{FID}(A) \geq \msf{FFD}(A)$. We now suppose that this inequality is strict and derive a contradiction. If there exists a $N \in \msf{D}^\mrm{b}(A)$ with finite injective dimension and $\injdim_A(N) - \sup(N) > \msf{FFD}(A)$, then \[\flatdim_A(\mbb{I}N) + \inf(\mbb{I}N) = \injdim_A(N) > \msf{FFD}(A)\] by \cref{eqn:BCsup} and \cref{BCflatinj}. This contradicts the definition of $\msf{FFD}(A)$ so $\msf{FFD}(A) = \msf{FID}(A)$ as required.
\end{proof}

As such, we obtain bounds on $\msf{FID}(A)$ from \cref{FFDbounds}; namely, we have
\[\mrm{dim}(\mrm{H}^0(A)) - \amp(A) - 1 \leq \msf{FID}(A) \leq \mrm{dim}(\mrm{H}^0(A))\]
for any commutative noetherian DG-ring $A$ with bounded cohomology and $\mrm{dim}(\mrm{H}^0(A)) < \infty$.

We now turn to understanding the small finitistic flat and injective dimensions. 
\begin{dfn}
Let $A$ be a commutative DG-ring.
\begin{itemize}
\item[(i)] The \emph{small finitistic flat dimension} of $A$ is defined by \[\msf{ffd}(A) = \sup\{\flatdim_A(M) + \inf(M) \mid M \in \msf{D}^\mrm{b}_\mrm{f}(A),~\flatdim_A(M) < \infty\}.\]
\item[(i)] The \emph{small finitistic injective dimension} of $A$ is defined by \[\msf{fid}(A) = \sup\{\injdim_A(M) -\sup(M) \mid M \in \msf{D}^\mrm{b}_\mrm{f}(A),~\injdim_A(M) < \infty\}.\]
\end{itemize}
\end{dfn}

The next lemma is, essentially, the be-all and end-all for the small finitistic flat and injective dimensions in the local case.

\begin{prop}\label{fpdequalffd}
Let $(A, \bar{\m})$ be a commutative noetherian local DG-ring with bounded cohomology. Then all three of $\msf{fpd}(A)$, $\msf{ffd}(A)$ and $\msf{fid}(A)$ agree, and are equal to $\seqdepth_A(A)-\mrm{amp}(A)$.
\end{prop}

\begin{proof}
The equality $\msf{fpd}(A)=\msf{ffd}(A)$ is \cref{prop:fg_flat_proj}, which are equal to $\seqdepth_A(A)-\mrm{amp}(A)$ by \cref{smallfindimdepth}. For the injective case, let $M\in\D_\mrm{f}^\mrm{b}(A)$ have finite injective dimension. By the Bass formula given in~\cite[Theorem 3.33]{Minamoto}, we have that $\injdim_{A}(M)-\sup(M)=\seqdepth_A(A)-\amp(A)$. Therefore by the definition of $\msf{fid}(A)$, it is clear that $\msf{fid}(A)=\seqdepth_A(A)-\text{amp}(A)$.
\end{proof}

We note that the proof shows that any $M\in\D_\mrm{f}^\mrm{b}(A)$ with finite injective dimension achieves the bound. 

We also have the following results for non-local DG-rings, whose proofs are analogous to that of \cref{lem:projdimlocal}, using~\cite[Propositions 5.3.F and 5.3.I]{AF}.

\begin{lem}\label{lem:flatdimlocal}
Let $A$ be a commutative noetherian DG-ring. 
\begin{enumerate}
\item For any $M \in \msf{D}(A)$ we have \[\flatdim_A(M) = \sup\{\flatdim_{A_{\bar\p}}(M_{\bar\p}) \mid \bar\p \in \mrm{Spec}(\mrm{H}^0(A))\}.\] Consequently $\msf{ffd}(A) \leq \sup\{\msf{ffd}(A_{\bar\p}) \mid \bar\p \in \mrm{Spec}(\mrm{H}^0(A))\}$ and similarly for $\msf{FFD}(A)$. 

\item For any $M \in \msf{D}^+(A)$ we have \[\injdim_A(M) = \sup\{\injdim_{A_{\bar\p}}(M_{\bar\p}) \mid \bar\p \in \mrm{Spec}(\mrm{H}^0(A))\}.\] Consequently $\msf{fid}(A) \leq \sup\{\msf{fid}(A_{\bar\p}) \mid \bar\p \in \mrm{Spec}(\mrm{H}^0(A))\}$ and similarly for $\msf{FID}(A)$. 
\end{enumerate}
\end{lem}

\section{Proving optimality of the bounds}\label{sec:examples}
In \cref{cor:FPDbounds} we showed that if $A$ is a commutative noetherian DG-ring with bounded cohomology such that $\dim(\mrm{H}^0(A)) < \infty$ then the big finitistic projective dimension satisfies
\[
\dim(\mrm{H}^0(A)) - \amp(A) \le \msf{FPD}(A) \le \dim(\mrm{H}^0(A)).
\]
The aim of this section is to construct examples
showing that this result cannot be improved.
In fact, we show there are examples of DG-rings $A$ for which $\msf{FPD}(A) = \dim(\mrm{H}^0(A)) - \amp(A)$,
whilst there are other examples of $A$ for which $\msf{FPD}(A) = \dim(\mrm{H}^0(A))$. 
Note that for any commutative noetherian ring $A$ of finite Krull dimension,
it holds that $\msf{FPD}(A) = \dim(A)$, but
the examples we construct involve DG-rings of positive amplitude. In fact, we construct whole families of examples where the dimension of $\mrm{H}^0(A)$ and the amplitude of $A$ range independently across the whole non-negative integers. 

For all integers $d\geq 0$ and $n>0$, we first construct examples of commutative noetherian DG-rings $A$ of amplitude $n$ such that $\msf{FPD}(A) = \dim(\mrm{H}^0(A))=d$.

\begin{exa}\label{ex:upper_bound}
Let $R:=B\times C$ be a product of noetherian rings and suppose without loss of generality that $\mrm{dim}(C) \leq \mrm{dim}(B) = d$. Then \[\dim(R)=\text{max}\{\dim(B),\dim(C)\}=d.\] Take $A$ to be the trivial extension DG-ring $A = R\skewtimes C[n]$; see \cite[Section 1]{JorDual} for details. Note that $A$ is a commutative noetherian DG-ring with $\mrm{H}^0(A) = R$,
$\mrm{H}^{-n}(A) = C$, and $\mrm{H}^i(A) = 0$ for all $i\notin \{-n,0\}$. In particular, $\amp(A) = n$ and $\dim(\mrm{H}^0(A)) = d$. 

Let $M$ be a $B$-module such that $\projdim_B(M)=d$; such a module exists by a result of Bass contained in \cref{thm:construction}. Choose a $B$-module $N$ such that $\opn{Ext}^d_B(M,N)\neq 0$. Since $B$ is a localization of $A$ and $M$ is a $B$-module, we have an isomorphism
\[
\RHom_{A}(M,N) \cong \RHom_{B}(M,N)
\]
in $\msf{D}(A)$ by adjunction.
Therefore $\projdim_{A}(M)\geq d$. Since $M$ is clearly of finite flat (and projective) dimension over $A$, we have $d\geq \msf{FPD}(A)\geq \projdim_{A}(M)+\inf(M)\geq d $ by~\cref{cor:FPDbounds}, and hence $\msf{FPD}(A)=d$.
\end{exa}

Next, for all integers $d\geq 0$ and $n>0$, we will construct examples of commutative noetherian DG-rings $A$ of amplitude $n$ such that $\dim(\mrm{H}^0(A))=d$ and $\msf{FPD}(A)=\dim(\mrm{H}^0(A))-\amp(A)$.
To do this, we first prove a general result about certain Gorenstein DG-rings.
Recall, following \cite{FJ},
that a commutative noetherian local DG-ring $(A,\bar{\m})$ is called Gorenstein if $\injdim_A(A) < \infty$.
This implies that $\amp(A) < \infty$.

\begin{thm}\label{thm:GorensteinFPD}
Let $(A,\bar{\m})$ be a Gorenstein local DG-ring.
Suppose that the noetherian local ring $\mrm{H}^0(A)$ is also Gorenstein.
Then for any $M \in \cat{D}^{\mrm{b}}(A)$ with $\flatdim_A(M) < \infty$
it holds that $\projdim_A(M) \le \dim(\mrm{H}^0(A)) -\amp(A) -\inf(M)$.
\end{thm}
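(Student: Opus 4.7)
My strategy is to exploit the reduction
$\projdim_A(M) = \projdim_{\mrm{H}^0(A)}(\mrm{H}^0(A)\Lotimes_A M)$ from \cref{cor:injdimcoreduction}(i). Since $\mrm{H}^0(A)\Lotimes_A M$ has finite flat dimension over the noetherian ring $\mrm{H}^0(A)$ (again by \cref{cor:injdimcoreduction}) and lies in $\cat{D}^\mrm{b}(\mrm{H}^0(A))$, \cref{thm:globalfpdbound} applied to $\mrm{H}^0(A)$ yields
\[
\projdim_A(M) \le \dim(\mrm{H}^0(A)) - \inf(\mrm{H}^0(A)\Lotimes_A M).
\]
As \cref{rem:reduction-bound} warns, this estimate is usually useless because one has no control on $\inf(\mrm{H}^0(A)\Lotimes_A M)$. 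The heart of the proof is to use the two Gorenstein hypotheses to establish the sharp identity
\[
\inf(\mrm{H}^0(A)\Lotimes_A M) = \inf(M) + \amp(A),
\]
after which substitution into the displayed inequality gives the desired bound.

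To produce this identity, I will first show the key isomorphism
\[
\RHom_A(\mrm{H}^0(A),A) \cong \mrm{H}^0(A)[\amp(A)]
\]
in $\cat{D}(A)$. Because $A$ is Gorenstein, the trivial equality $A \cong \RHom_A(A,A)$ together with $\injdim_A(A)<\infty$ show that $A$ itself is a dualizing DG-module over $A$. Setting $k := \dim(\mrm{H}^0(A)) - \amp(A)$, the shift $R := A[k]$ then satisfies $\inf(R) = -\amp(A) - k = -\dim(\mrm{H}^0(A))$, so $R$ is a normalized dualizing DG-module. By \cite[Proposition 7.5]{YeDual}, $\RHom_A(\mrm{H}^0(A), R)$ is a dualizing complex over $\mrm{H}^0(A)$, and by \cref{lem:infcoreduction}(i) its infimum equals $\inf(R) = -\dim(\mrm{H}^0(A))$, so it is normalized. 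On the other hand, since $\mrm{H}^0(A)$ is Gorenstein and local, $\mrm{H}^0(A)[\dim(\mrm{H}^0(A))]$ is a normalized dualizing complex over $\mrm{H}^0(A)$. Uniqueness of normalized dualizing complexes over a local ring forces $\RHom_A(\mrm{H}^0(A), A[k]) \cong \mrm{H}^0(A)[\dim(\mrm{H}^0(A))]$, and desuspending by $k$ yields the claimed formula.

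With this isomorphism in hand, I apply $- \Lotimes_A M$ and invoke \cref{tensoreval}(iii) (whose hypotheses are satisfied: $\mrm{H}^0(A) \in \cat{D}^{-}_\mrm{f}(A)$, $A \in \cat{D}^{+}(A)$, and $\flatdim_A(M) < \infty$) to obtain
\[
(\mrm{H}^0(A)\Lotimes_A M)[\amp(A)] \cong \RHom_A(\mrm{H}^0(A), A) \Lotimes_A M \cong \RHom_A(\mrm{H}^0(A), M).
\]
Taking $\inf$ on both sides and using \cref{lem:infcoreduction}(i) on the right, together with the fact that shifting by $\amp(A)$ decreases the infimum by $\amp(A)$, gives exactly $\inf(\mrm{H}^0(A)\Lotimes_A M) = \inf(M) + \amp(A)$, completing the argument. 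The main obstacle is the identification $\RHom_A(\mrm{H}^0(A),A) \cong \mrm{H}^0(A)[\amp(A)]$; everything else is bookkeeping. It is precisely here that both Gorenstein hypotheses are essential: Gorenstein-ness of $A$ makes $A$ (and its shifts) a dualizing DG-module, while Gorenstein-ness of $\mrm{H}^0(A)$ is what lets us recognise the dualizing complex over $\mrm{H}^0(A)$ as a shift of $\mrm{H}^0(A)$ itself.
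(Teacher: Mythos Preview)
Your proof is correct and follows essentially the same approach as the paper: establish the key isomorphism $\RHom_A(\mrm{H}^0(A),A)\cong \mrm{H}^0(A)[\amp(A)]$ from the two Gorenstein hypotheses, combine it with \cref{tensoreval} to compute $\inf(\mrm{H}^0(A)\Lotimes_A M)$, and feed this into the reduction formula together with \cref{thm:globalfpdbound}. The only cosmetic difference is that the paper first writes $\RHom_A(\mrm{H}^0(A),A)\cong \mrm{H}^0(A)[k]$ for an unknown $k$ and then determines $k$ via \cref{lem:infcoreduction}, whereas you normalize the dualizing DG-modules up front to pin down the shift directly.
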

\begin{proof}
Since $A$ and $\mrm{H}^0(A)$ are Gorenstein, 
it follows that $A$ is a dualizing DG-module over itself and that $\mrm{H}^0(A)$ is dualizing complex over itself.
By \cite[Theorem I]{FIJ} or \cite[Proposition 7.5]{YeDual} and the fact that over a local ring a dualizing complex is unique up to a shift,
it follows that $\RHom_A(\mrm{H}^0(A),A) \cong \mrm{H}^0(A)[k]$ for some $k \in \mathbb{Z}$.
By \cref{lem:infcoreduction} we know that 
\[
\inf(\RHom_A(\mrm{H}^0(A),A)) = \inf(A),
\]
so we deduce that
$\RHom_A(\mrm{H}^0(A),A) \cong \mrm{H}^0(A)[-\inf(A)]$.
Using this fact and \cref{tensorhomeval},
there are isomorphisms
\[
\RHom_A(\mrm{H}^0(A),M) \cong \RHom_A(\mrm{H}^0(A),A) \Lotimes_A M \cong \mrm{H}^0(A)[-\inf(A)] \Lotimes_A M.
\] 
These isomorphisms and another application of \cref{lem:infcoreduction} show that
\[
\inf(M) = \inf(\RHom_A(\mrm{H}^0(A),M)) = \inf(\mrm{H}^0(A)[-\inf(A)] \Lotimes_A M) = \inf(\mrm{H}^0(A) \Lotimes_A M) + \inf(A).
\]
Therefore we have
\begin{align*}
\projdim_A(M) &= \projdim_{\mrm{H}^0(A)}(\mrm{H}^0(A) \Lotimes_A M) & \mbox{ by \cref{cor:injdimcoreduction}}, \\
&\leq \dim(\mrm{H}^0(A))-\inf(\mrm{H}^0(A) \Lotimes_A M) & \mbox{ by \cref{thm:globalfpdbound}}, \\
&=\dim(\mrm{H}^0(A)) + \inf(A) - \inf(M) & \\ 
&= \dim(\mrm{H}^0(A)) - \amp(A) -\inf(M) &
\end{align*}
as required.
\end{proof}

By combining \cref{cor:FPDbounds} and \cref{thm:GorensteinFPD} we obtain the following corollary.
\begin{cor}\label{cor:Gorenstein}
Let $(A,\bar{\m})$ be a Gorenstein local DG-ring such that $\mrm{H}^0(A)$ is Gorenstein.
Then $\msf{FPD}(A)=\dim(\mrm{H}^0(A))-\amp(A)$.
\end{cor}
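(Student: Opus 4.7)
The proof should be short, combining the upper bound provided by the immediately preceding \cref{thm:GorensteinFPD} with the general lower bound already established in \cref{cor:FPDbounds}.

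For the upper bound, the plan is to take any $M \in \cat{D}^{\mrm{b}}(A)$ with $\projdim_A(M) < \infty$; since finite projective dimension implies finite flat dimension, we may apply \cref{thm:GorensteinFPD} (whose hypotheses are exactly those of the corollary) to conclude $\projdim_A(M) \le \dim(\mrm{H}^0(A)) - \amp(A) - \inf(M)$, i.e.\ $\projdim_A(M) + \inf(M) \le \dim(\mrm{H}^0(A)) - \amp(A)$. Taking the supremum over all such $M$ then yields $\msf{FPD}(A) \le \dim(\mrm{H}^0(A)) - \amp(A)$.

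For the matching lower bound, nothing new needs to be done: a Gorenstein local DG-ring has bounded cohomology (since $\injdim_A(A) < \infty$ forces $\amp(A) < \infty$) and $\dim(\mrm{H}^0(A)) < \infty$ because $\mrm{H}^0(A)$ is Gorenstein and local, so \cref{cor:FPDbounds} applies directly to give $\dim(\mrm{H}^0(A)) - \amp(A) \le \msf{FPD}(A)$. Combining the two inequalities yields the stated equality.

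There is no genuine obstacle here: all the heavy lifting — the deep ingredient being the reduction-to-$\mrm{H}^0(A)$ argument via $\RHom_A(\mrm{H}^0(A),A) \cong \mrm{H}^0(A)[-\inf(A)]$ and the global upper bound \cref{thm:globalfpdbound} — is already absorbed into \cref{thm:GorensteinFPD}, while the lower bound is a black-box consequence of the Bass-style construction of \cref{thm:construction}. The only small point worth verifying explicitly in the write-up is that the hypothesis ``$\flatdim_A(M) < \infty$'' in \cref{thm:GorensteinFPD} is indeed implied by ``$\projdim_A(M) < \infty$'', which follows immediately from the definitions in \cref{dfn:homDim}.
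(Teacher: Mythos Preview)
Your proposal is correct and follows exactly the paper's approach: the paper's proof is the one-line ``This follows from \cref{cor:FPDbounds} and \cref{thm:GorensteinFPD},'' which is precisely the combination of upper and lower bounds you describe. Your expanded write-up simply makes explicit the two ingredients (and the easy observation that finite projective dimension implies finite flat dimension), so nothing is missing or different.
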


We now give a concrete example of a DG-ring to which the previous corollary applies, thus showing that for all integers $d\geq 0$ and $n>0$, there are commutative noetherian DG-rings $A$ of amplitude $n$ such that $\dim(\mrm{H}^0(A))=d$ and $\msf{FPD}(A)=\dim(\mrm{H}^0(A))-\amp(A)$.

\begin{exa}\label{ex:lower_bound}
To begin with, consider a Gorenstein local ring $B$ of dimension $d+k$ for some $k>0$ and let $b_1,\dots,b_k$ be a regular sequence in $B$ generating an ideal $I$. The quotient ring $B/I$ is also Gorenstein, and has dimension $d$. If we enlarge the generating set $b_1,\dots,b_k$ to a generating set $b_1,\dots,b_{n+k}$ for $I$ and set $A := K(B;b_1,\dots,b_{n+k})$, then $A$ is a commutative noetherian local DG-ring with bounded cohomology satisfying $\dim(\mrm{H}^0(A)) = d$ and $\amp(A)=n$.
By~\cite[Theorem 4.9]{FJ} or \cite{AvGol}, 
the fact that $B$ is a Gorenstein ring implies that $A$ is a Gorenstein DG-ring.
Moreover, $\mrm{H}^0(A) = B/I$ is also Gorenstein, and hence $\msf{FPD}(A)=\dim(\mrm{H}^0(A))-\amp(A)$ by \cref{cor:Gorenstein}.
\end{exa}

By combining the examples constructed in this section, we obtain the following theorem, which shows that for any dimension and any amplitude, we may obtain either of the bounds on $\msf{FPD}(A)$ given in \cref{cor:FPDbounds}. In particular, we see that in general neither amplitude, nor dimension, are sufficient to determine big finitistic projective dimension.
\begin{thm}\label{thm:examples}
Given integers $d\geq 0$ and $n>0$, there exist commutative noetherian DG-rings $A$ and $A'$ with $\amp(A) = \amp(A') = n$, such that $\dim(\mrm{H}^0(A))=\dim(\mrm{H}^0(A'))=d$, $\msf{FPD}(A)=\dim(\mrm{H}^0(A))-\amp(A)$ and $\msf{FPD}(A')=\dim(\mrm{H}^0(A'))$.
\end{thm}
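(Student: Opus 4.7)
My plan is to harvest the two explicit constructions developed in \cref{ex:upper_bound} and \cref{ex:lower_bound}, which together supply the required pair of DG-rings for every admissible $(d,n)$. The statement is essentially a consolidation of those two examples, so the only real content of the proof is to check that both constructions can be carried out uniformly for all $d\ge 0$ and $n>0$, including the boundary case $d=0$.

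For $A'$ achieving the upper bound $\msf{FPD}(A')=\dim(\mrm{H}^0(A'))$, I would follow \cref{ex:upper_bound}: pick a noetherian ring $B$ of Krull dimension exactly $d$ (a field if $d=0$, otherwise $K[x_1,\dots,x_d]$), pick any noetherian ring $C$ with $\dim(C)\le d$, set $R=B\times C$, and form the trivial extension DG-ring $A'=R\skewtimes C[n]$. Its cohomology is concentrated in degrees $0$ and $-n$ with values $R$ and $C$ respectively, so $\amp(A')=n$ and $\dim(\mrm{H}^0(A'))=d$. Bass' classical construction (also recovered in \cref{thm:construction}) produces a $B$-module $M$ with $\projdim_B(M)=d$, and because $B$ is a localization of $\mrm{H}^0(A')=R$ — hence of $A'$ — the adjunction isomorphism $\RHom_{A'}(M,N)\cong \RHom_B(M,N)$ yields $\projdim_{A'}(M)\ge d$. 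The upper bound in \cref{cor:FPDbounds} then forces $\msf{FPD}(A')=d$.

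For $A$ achieving the lower bound $\msf{FPD}(A)=\dim(\mrm{H}^0(A))-\amp(A)$, I would follow \cref{ex:lower_bound}: pick a Gorenstein local ring $B$ of Krull dimension $d+k$ for some $k\ge 1$ (for example $K[[x_1,\dots,x_{d+k}]]$), choose a regular sequence $b_1,\dots,b_k$ generating an ideal $I$ such that $B/I$ is Gorenstein of dimension $d$, and then enlarge this to an $(n+k)$-element generating set of $I$ by adjoining $n$ redundant elements of $I$. Set $A:=K(B;b_1,\dots,b_{n+k})$. Decomposing $A\simeq K(B;b_1,\dots,b_k)\Lotimes_B K(B;b_{k+1},\dots,b_{n+k})$, the first factor resolves $B/I$ while the second factor degenerates after base change to a direct sum of shifts of $B/I$ in cohomological degrees $-n$ through $0$; hence $\amp(A)=n$ and $\mrm{H}^0(A)=B/I$ has dimension $d$. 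Since $B$ is Gorenstein, so is the Koszul DG-algebra $A$ by \cite[Theorem 4.9]{FJ} or \cite{AvGol}, and $\mrm{H}^0(A)=B/I$ is Gorenstein by construction; so \cref{cor:Gorenstein} gives $\msf{FPD}(A)=d-n$.

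No genuine obstacle remains: the analytic content is already sealed by \cref{thm:globalfpdbound} and \cref{thm:GorensteinFPD} together with the two example constructions. The only thing to watch is that the auxiliary data can be chosen uniformly across all pairs $(d,n)$ — in particular in the edge case $d=0$, where one takes $B$ (and, for the upper bound example, $C$) to be a field so that the Krull dimensions and amplitudes come out correctly.
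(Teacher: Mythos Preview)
Your proposal is correct and follows exactly the paper's approach: the proof in the paper is the one-liner ``This follows from \cref{ex:upper_bound} and \cref{ex:lower_bound},'' and you have simply unpacked those two examples with a bit of extra care about the boundary case $d=0$. There is nothing to add or correct.
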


\section{An application to homologically smooth maps}\label{sec:smooth}
In this section, we use the results of this paper to obtain vanishing results for derived Hochschild (co)homology of homologically smooth algebras. 
Recall that a homomorphism of commutative rings $A \to B$ is called \emph{homologically smooth}
if the ring $B$, considered as an object of $\cat{D}(B\Lotimes_A B)$ is compact.
This definition is due to Kontsevich, see \cite{KonSo} for details.

Recall that the derived Hochschild homology modules of a map of commutative rings $A \to B$ 
are defined to be the $B$-modules $\mrm{HH}_i(B/A) := \opn{Tor}_{i}^{B\Lotimes_A B}(B,B)$.
Similarly, the derived Hochschild cohomology modules of $B$ over $A$ are given by 
$\mrm{HH}^i(B/A) := \opn{Ext}^i_{B\Lotimes_A B}(B,B)$.
\begin{cor}\label{cor:smooth-bound}
Let $A$ and $B$ be commutative noetherian rings,
and let $\varphi\colon A \to B$ be a ring homomorphism.
Assume that $\varphi$ satisfies:
\begin{enumerate}
\item $\varphi$ is essentially of finite type, i.e., it makes $B$ into a localization of a finitely generated $A$-algebra;
\item the map $\varphi$ has finite flat dimension, i.e., $\flatdim_A(B) < \infty$.
\item the map $\varphi$ is homologically smooth.
\end{enumerate}
Then 
\[
\projdim_{B\Lotimes_A B}(B) \le \dim(B\otimes_A B).
\]
\end{cor}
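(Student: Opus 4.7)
Set $C := B \Lotimes_A B$, and view $B$ as an object of $\cat{D}(C)$ via the multiplication map $\mu \colon C \to B$. The strategy is to verify that $C$ satisfies the hypotheses of \cref{cthm:B} (i.e.\ \cref{thm:globalfpdbound}), and then to feed $B$ itself into that theorem. Since $B$ is concentrated in cohomological degree $0$ as a $C$-module, we have $\inf(B) = \sup(B) = 0$, so the conclusion of \cref{thm:globalfpdbound} will read $\projdim_C(B) \le \dim(\mrm{H}^0(C)) = \dim(B \otimes_A B)$, which is exactly what we want.

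The first task is to check that $C$ is a commutative noetherian DG-ring with bounded cohomology. Boundedness of $\mrm{H}^\ast(C) = \opn{Tor}^A_\ast(B,B)$ is immediate from hypothesis (2). For $\mrm{H}^0(C) = B \otimes_A B$ to be noetherian, I use that $\varphi$ is essentially of finite type: then $B \otimes_A B$ is essentially of finite type over the noetherian ring $A$, hence noetherian. To see that each $\mrm{H}^{-i}(C) = \opn{Tor}^A_i(B,B)$ is finitely generated over $B \otimes_A B$, I will use that $B$ is finitely presented over the noetherian ring $A$ (by essential finite type), and that a finitely presented module of finite flat dimension over a noetherian ring has finite projective dimension and admits a bounded resolution by finitely generated projective $A$-modules. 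Tensoring such a resolution with $B$ over $A$ gives a bounded complex of finitely generated $B$-modules computing $\opn{Tor}^A_\ast(B,B)$, so each $\mrm{H}^{-i}(C)$ is finitely generated over $B$, and hence over $B \otimes_A B$.

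The second task is to verify that $\flatdim_C(B) < \infty$. This is where the homological smoothness hypothesis enters: by definition, $B$ is a compact object of $\cat{D}(C)$, and compact objects of the derived category of a DG-ring are perfect, hence of finite flat (even projective) dimension.

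With these two ingredients in hand, the proof concludes in one line: \cref{thm:globalfpdbound} applied to $M = B \in \cat{D}^{\mrm{b}}(C)$ gives
\[
\projdim_C(B) \le \dim(\mrm{H}^0(C)) - \inf(B) = \dim(B \otimes_A B),
\]
as required. I expect the main obstacle to be the bookkeeping around noetherianity of $C$, in particular the passage from finite flat dimension to a bounded resolution by finitely generated projectives; once that is settled, both the compactness step and the application of \cref{thm:globalfpdbound} are essentially formal.
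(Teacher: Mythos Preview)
Your overall strategy is identical to the paper's: verify that $C = B \Lotimes_A B$ is a commutative noetherian DG-ring with bounded cohomology, note that homological smoothness gives $\projdim_C(B) < \infty$ (hence $\flatdim_C(B) < \infty$), and apply \cref{thm:globalfpdbound} with $\inf(B) = 0$. The paper simply asserts the noetherianity of $C$ as a consequence of hypotheses (1) and (2) without further comment, so you are attempting to justify a step the paper leaves implicit.

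However, your justification of that step contains a genuine error. You claim that ``$B$ is finitely presented over the noetherian ring $A$ (by essential finite type)'' and then propose to resolve $B$ by finitely generated projective $A$-modules. But \emph{essentially of finite type} is a statement about the ring structure, not the module structure: $B$ need not be finitely generated as an $A$-module at all (take $A$ a field and $B = A[x]$). So the bounded projective resolution you want does not exist in general, and the argument as written fails. A correct route is to exploit the algebra structure: factor $A \to B$ through a semi-free commutative DG-$A$-algebra $\tilde{B} \xrightarrow{\sim} B$ with $\tilde{B}^0$ a (localized) polynomial $A$-algebra in finitely many variables and each $\tilde{B}^{-i}$ finitely generated free over $\tilde{B}^0$. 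Then $\tilde{B} \otimes_A B$ models $C$, its degree-zero piece $\tilde{B}^0 \otimes_A B$ is noetherian (essentially of finite type over $B$), each graded piece is finitely generated free over it, and bounded cohomology forces each $\mrm{H}^{-i}(C)$ to be finitely generated over $\mrm{H}^0(C) = B \otimes_A B$. Once this is repaired, the remainder of your argument goes through exactly as in the paper.
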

\begin{proof}
Since $\varphi$ is homologically smooth,
it follows that $\projdim_{B\Lotimes_A B}(B) < \infty$.
The fact that $\varphi$ has finite flat dimension and is essentially of finite type implies that
$B\Lotimes_A B$ is a commutative noetherian DG-ring with bounded cohomology.
By \cref{thm:globalfpdbound}, 
we deduce that
\[
\projdim_{B\Lotimes_A B}(B) \le \dim(\mrm{H}^0(B\Lotimes_A B)) - \inf(B) = \dim(B\otimes_A B),
\]
where the latter follows from the fact that by definition
$\mrm{H}^0(B\Lotimes_A B) = B\otimes_A B.$
\end{proof}

\begin{cor}
In the situation of \cref{cor:smooth-bound},
the derived Hochschild homology and derived Hochschild cohomology modules satisfy
\[
\mrm{HH}_i(B/A) = \opn{Tor}_{i}^{B\Lotimes_A B}(B,B) = 0 \quad \text{and} \quad
\mrm{HH}^i(B/A) = \opn{Ext}^i_{B\Lotimes_A B}(B,B) = 0
\]
for all $i>\dim(B\otimes_A B)$.
\end{cor}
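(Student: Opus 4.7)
The plan is to reduce this corollary to a direct unwinding of the definitions of projective and flat dimension for DG-modules (\cref{dfn:homDim}), using the bound $\projdim_{B\Lotimes_A B}(B) \le \dim(B\otimes_A B)$ established in \cref{cor:smooth-bound}. The key observation is that $B$, viewed both as the source and target in the Ext/Tor computation, sits over the DG-ring $B\Lotimes_A B$ with $\mrm{H}^0(B\Lotimes_A B) = B\otimes_A B$, and under the canonical module structure $B$ has cohomology concentrated in degree $0$, so $\sup(B) = \inf(B) = 0$.

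First I would set $d := \dim(B\otimes_A B)$ and recall from \cref{cor:smooth-bound} that $\projdim_{B\Lotimes_A B}(B) \le d$. For the cohomological statement, by the definition of projective dimension in \cref{dfn:homDim}(iii) applied with $M = N = B$, we have $\opn{Ext}^i_{B\Lotimes_A B}(B,B) = 0$ whenever $i > \projdim_{B\Lotimes_A B}(B) + \sup(B) \le d + 0 = d$, which is precisely the claimed vanishing of $\mrm{HH}^i(B/A)$.

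For the homological statement, I would first note that in general $\flatdim_R(M) \le \projdim_R(M)$ for any DG-ring $R$ and $M \in \cat{D}(R)$ (this is immediate from the definitions, since any test DG-module killing Ext also kills Tor after passing through Pontryagin-type duality, or more simply because projective DG-modules are flat; this is also recorded implicitly in the inclusion $\opn{Proj}(A) \subseteq \opn{Flat}(A)$). Hence $\flatdim_{B\Lotimes_A B}(B) \le d$. Applying \cref{dfn:homDim}(ii) with $N = B$ (so $\inf(N) = 0$), we conclude $\opn{Tor}_i^{B\Lotimes_A B}(B,B) = 0$ for all $i > d$, giving the claimed vanishing of $\mrm{HH}_i(B/A)$.

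There is essentially no obstacle here; the work is entirely in \cref{cor:smooth-bound} and hence in \cref{thm:globalfpdbound}. The only mild care needed is ensuring that $B$ as an object of $\cat{D}(B\Lotimes_A B)$ really does have $\sup = \inf = 0$, which follows from the fact that the multiplication map $B\Lotimes_A B \to B$ is a map of DG-rings realising $B$ as a DG-algebra concentrated in cohomological degree $0$, so $\mrm{H}^i(B) = 0$ for $i \ne 0$ when viewed over $B\Lotimes_A B$.
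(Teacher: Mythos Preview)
Your proposal is correct and is exactly the immediate deduction the paper has in mind; in fact the paper states this corollary without proof, leaving it as a direct consequence of \cref{cor:smooth-bound} together with the definitions of projective and flat dimension. Your justification that $\flatdim \le \projdim$ (which follows from \cref{cor:injdimcoreduction} together with the classical inequality over $\mrm{H}^0(A)$, or from $\mrm{Proj}(A)\subseteq\mrm{Flat}(A)$) and the observation $\sup(B)=\inf(B)=0$ are precisely the missing routine steps.
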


\bibliographystyle{abbrv}
\bibliography{references}
\end{document}